\numberwithin{equation}{section}
\newtheorem{theorem}{Theorem}[section]
\newtheorem{lemma}[theorem]{Lemma}
\newtheorem{proposition}[theorem]{Proposition}
\newtheorem{assumption}[theorem]{Assumption}
\pgfplotsset{compat=newest}
\newlength\figureheight
\newlength\figurewidth
\pgfplotsset{
tick label style={font=\scriptsize},
label style={font=\footnotesize},
legend style={font=\footnotesize},
every axis plot/.append style={very thick}
}
\newcommand{\bs}{\boldsymbol}
\renewcommand{\hat}{\widehat}
\newcommand{{\paa}[1]}{p_{00,#1}}
\newcommand{{\pab}[1]}{p_{01,#1}}
\newcommand{{\pba}[1]}{p_{10,#1}}
\newcommand{{\pbb}[1]}{p_{11,#1}}
\newcommand{\xin}{X_i^{(n)}}
\newcommand{\eee}{\mathrm{e}}
\newcommand{\ddd}{\mathrm{d}}
\renewcommand{\fnum@figure}[1]{\textbf{\figurename~\thefigure}. }
\renewcommand{\fnum@table}[1]{\textbf{\tablename~\thetable}. }
\begin{document}

\title[Graphon-valued processes]{Graphon-valued processes with\\ 
vertex-level fluctuations}

\author{Peter Braunsteins}
\address{Korteweg-de-Vries Instituut, Universiteit van Amsterdam, PO Box 94248, 1090 GE Amsterdam, The Netherlands}
\email{pbraunsteins@gmail.com}

\author{Frank den Hollander}
\address{Mathematisch Instituut, Universiteit Leiden, PO Box 9512, 2300 RA Leiden, The Netherlands}
\email{denholla@math.leidenuniv.nl}

\author{Michel Mandjes}
\address{Korteweg-de Vries Instituut, Universiteit van Amsterdam, PO Box 94248, 1090 GE Amsterdam, The Netherlands}
\email{M.H.R.Mandjes@uva.nl}

\date{\today}

\begin{abstract}
We consider a class of graph-valued stochastic processes in which each vertex has a type that fluctuates randomly over time. Collectively, the paths of the vertex types up to a given time determine the probabilities that the edges are active or inactive at that time. Our focus is on the evolution of the associated empirical graphon in the limit as the number of vertices tends to infinity, in the setting where fluctuations in the graph-valued process are more likely to be caused by fluctuations in the vertex types than by fluctuations in the states of the edges given these types. We derive both sample-path large deviation principles and convergence of stochastic processes. We demonstrate the flexibility of our approach by treating a class of stochastic processes where the edge probabilities depend not only on the fluctuations in the vertex types but also on the state of the graph itself.

\vspace{0.5cm}
\noindent

\noindent
\emph{Key words.}
Graphs, graphons, dynamics, sample paths, process convergence, large deviations, optimal paths.\\
\emph{MSC2010.}
05C80, 
60C05, 
60F10. 
\\
\emph{Acknowledgment.}
The work in this paper was supported by the Netherlands Organisation for Scientific Research (NWO) through Gravitation-grant NETWORKS-024.002.003.

\end{abstract}

\maketitle


\section{Introduction}

\subsection{Background}

Graphons arise as a powerful tool for characterising the limit of a sequence of \emph{dense graphs}, i.e., graphs in which the number of edges scales as the square of the number of vertices. The theory describing these graphons (see e.g.\ \cite{LS06}, \cite{LSb}, \cite{BCLSVa}, \cite{BCLSVb}, \cite{L12}) focuses on the limiting properties of large dense graphs in terms of their subgraph densities. The literature covers both typical and atypical behaviour, a notable result being the large deviation principle (LDP) for homogeneous Erd\H{o}s-R\'enyi random graphs and associated graphons \cite{CV11}, and their inhomogeneous counterparts \cite{DS19}. 

While most of the existing theory focuses on \emph{static} random graphons, the attention has gradually shifted to \emph{dynamic} random graphons (see e.g.\ \cite{R}, \cite{CK20}, \cite{AdHR19}). Two notable contributions in this area are \cite{AdHR19}, which presents a stochastic process limit in the space of graphons for a class of processes where the edges evolve in a dependent manner, and \cite{BdHM20}, which extends the LDP of \cite{CV11} to a sample-path LDP for a dynamic random graph in which the edges switch on and off independently in a random fashion. In \cite{BdHM20} the authors leave open the question whether a sample-path large deviation principle can be established for processes where the edges switch on and off in a dependent manner, such as in \cite{AdHR19}.

\subsection{Motivation}

The goal of the present paper is two-fold: (1) to answer the open question raised in \cite{BdHM20} by establishing a sample-path large deviation principle for a class of processes in which edges evolve in a dependent manner; (2) to strengthen the results in \cite{AdHR19} while working in a more general framework.

In the class of processes we consider, each vertex is assigned a {\it type} that changes randomly over time, and fluctuations in the types of the vertices determine how the edges interact with each other while switching on and off. Specifically, the paths of the types of all the vertices up to time $t$ determine the probability that the edges in the random graph are active at time $t$. Collectively, these paths are called the \emph{driving process}. 

Our results generalise those of \cite{AdHR19} in a number of directions:
\begin{itemize}
\item[(i)]
We establish sample-path large deviations (Theorem \ref{thm:LDPmain}), whereas \cite{AdHR19} restricts attention to diffusion limits.
\item[(ii)] 
We consider a general driving process and a general edge-switching dynamics, whereas \cite{AdHR19} restricts attention to a specific driving process (the multi-type Moran model) and to a specific edge-switching dynamics (modulated by a fitness function). 
\item[(iii)] 
We establish stochastic process convergence in the space of $({\mathscr{W}}, d_\square)$-valued c\`adl\`ag paths (Theorem \ref{thm:CD}), whereas \cite{AdHR19} works in the space of $(\tilde{\mathscr{W}}, \delta_\square)$-valued Skorokhod paths. (For the definition of these two spaces, see Section \ref{sec:defs} below.)
\item[(iv)] 
We allow for processes in which the probabilities that edges are active depend not only on the fluctuations in the types but also on the state of the graph itself, i.e., on which edges are active or not (Section \ref{Sec:APP1}).
\end{itemize}
On the way to proving our results for graphon-valued processes, we also prove a new large deviation principle for static random graphs (Theorem \ref{thm:LDPTP}). This result can be viewed as a generalisation of the large deviation principle for inhomogeneous Erd\H{o}s-R\'enyi random graphs in which each vertex is assigned a random type. Our proofs rely on concentrations estimates, coupling arguments, and continuous mapping. Along the way, several examples are presented. 

The models analysed in this paper have a \emph{one-way dependence}: the states of the edges depend on the types of the vertices, but the types of the vertices do not depend on the states of the edges. There are many natural models that fall into this framework, coming from statistical physics, population genetics and the social sciences, where the strengths of the interactions between particles, alleles or individuals generally depend on the type they carry. It is much harder to analyse models that exhibit a \emph{two-way dependence}. Such models capture the evolution of spins, infections or opinions on dynamic random networks with mutual feedback, an area that so far remains largely unexplored.

\subsection{Outline}

In Section \ref{sec:IRG} we recall basic LDPs for graphons, and present three LDPs for what we call \emph{inhomogeneous random graphs with type dependence} (IRGTs), which are static random objects. In Section \ref{sec:GVP} we look at their dynamic counterparts, which are graph-valued processes, the main result being a \emph{sample-path LDP} in graphon space. We illustrate our results via a running example, and derive \emph{convergence} of the graph-valued process to a graphon process. In Section \ref{sec:ExSD} we describe various applications, and discuss possible extensions. Section~\ref{sec:proof} contains the proofs of our main theorems. Appendix \ref{appA} identifies the rate function in the LDP of the underlying driving process.


\section{Large deviations for static random graphs}
\label{sec:IRG}

While the goal of the present paper is to study a specific class of \emph{dynamic} random graphs, we begin by analyzing their \emph{static} counterparts. The reason is that the marginal distributions of the dynamic random graphs to be considered (introduced in Section \ref{sec:GVP}) at any given time $t \geq 0$ corresponds to a random graph with type dependence (introduced in Section \ref{sec:IRGRT}).

In Section~\ref{sec:defs} we recall a few basic definitions related to graphons. In Section \ref{sec:IHRG} we introduce inhomogeneous Erd\H{o}s-R\'enyi random graphs and recall the large deviation principle for their associated empirical graphons. In Section \ref{sec:IRGRT} we describe a generalisation of inhomogeneous Erd\H{o}s-R\'enyi random graphs, referred to as inhomogeneous random graphs with type dependence (IRGT), which motivate the definition of the class of graph-valued stochastic processes that we will be working with from Section \ref{sec:GVP} onwards. In Section~\ref{sec:assumptions} we state a number of key assumptions that are needed along the way. In Section~\ref{sec:LDP-IRGT} we establish the large deviation principle for the associated empirical graphon processes under the assumption that the driving process satisfies the LDP. The latter assumption is investigated in detail in Appendix \ref{appA}.  


\subsection{Graphs and graphons}
\label{sec:defs}

Let $\mathscr{W}$ be the space of functions $h\colon\,[0,1]^2 \to [0,1]$ such that $h(x,y)=h(y,x)$ for all $(x,y) \in [0,1]^2$, formed after taking the quotient with respect to the equivalence relation of almost everywhere equality. A finite simple graph $G$ on $n$ vertices can be represented as a graphon $h^G \in \mathscr{W}$ by setting
\begin{equation}
h^G(x,y) := \begin{cases}
1 &\quad \text{if there is an edge between vertex }\lceil nx \rceil\text{ and vertex }\lceil ny \rceil, \\
0 &\quad \text{otherwise.}
\end{cases} 
\end{equation}
This object is referred to as an \emph{empirical graphon} and has a block structure. The space of graphons $\mathscr{W}$ is endowed with the \emph{cut distance}
\begin{equation}
\label{cutdist}
d_\square(h_1, h_2) := \sup_{S,T \subseteq [0,1]} \left| \int_{S \times T} \ddd x\, \ddd y\, 
[h_1(x,y)- h_2(x,y)] \right|, \quad h_1,h_2 \in \mathscr{W}.
\end{equation}
It is noted that the space $(\mathscr{W}, d_\square)$ is not compact \cite[Example F.6]{J}.

On $\mathscr{W}$ there is a natural equivalence relation, referred to as `$\sim$'. Letting $\mathscr{M}$ denote the set of measure-preserving bijections $\sigma\colon\, [0,1] \to [0,1]$, we write $h_1 \sim h_2$ when there exists a $\sigma \in \mathscr{M}$ such that $h_1(x,y) = h_2(\sigma(x), \sigma(y))$ for all $(x,y) \in [0,1]^2$. This equivalence relation induces the quotient space $(\tilde{\mathscr{W}}, \delta_\square)$, where $\delta_\square$ is the \emph{cut metric} defined by 
\begin{equation}
\delta_\square(\tilde h_1, \tilde h_2) := \inf_{\sigma_1, \sigma_2 \in \mathscr{M}} d_\square (h_1^{\sigma_1}, h_2^{\sigma_2}), \quad \tilde h_1, \tilde h_2 \in \tilde{\mathscr{W}} . 
\end{equation}
Notably, the space $(\tilde{\mathscr{W}}, \delta_\square)$ \emph{is} compact \cite[Lemma 8]{LS06}.


\subsection{Inhomogeneous Erd\H{o}s-R\'enyi random graph}
\label{sec:IHRG}

Let $r \in \mathscr{W}$ be a \emph{reference graphon}. Fix $n \in \mathbb{N}$ and consider a random graph $\hat G_n$ with vertex set $[n]:=\{1, \dots, n\}$, where the pair of vertices $i,j \in [n]$, $i \neq j$, is connected by an edge with probability $r(\frac{i}{n},\frac{j}{n})$, independently of other pairs of vertices. Write $\mathbb{P}_n$ to denote the law of $\hat G_n$. Use the same symbol to denote the law on $\mathscr{W}$ induced by the map that associates with the graph $\hat G_n$ its graphon $h^{\hat G_n}$. Write $\tilde{\mathbb{P}}_n$ to denote the law of $\tilde h^{\hat G_n}$, the equivalence class associated with $h^{\hat G_n}$.

The following theorem is an extension of the LDP for homogeneous Erd\H{o}s-R\'enyi random graphs established in \cite{CV11}. It was first stated in \cite{DS19} under additional assumptions. These assumptions were subsequently relaxed in \cite{M20}, \cite{BCGPS20}, \cite{DM20}. The following version of the LDP corresponds to \cite[Theorem 4.1]{DM20}.

\begin{theorem}\label{Thm:LDPIRG}
Suppose that $r\log r$ and $(1-r)\log(1-r)$ are integrable. Then the sequence of probability measures $(\tilde{\mathbb{P}}_n)_{n\in \mathbb{N}}$ satisfies the LDP on $(\tilde{\mathscr{W}}, \delta_\square)$ with rate ${n \choose 2}$ and with rate function $\tilde I_r$, i.e.,  
\begin{equation}
\begin{aligned}
&\limsup_{n \to \infty} \frac{1}{{n \choose 2}} \log \tilde{\mathbb{P}}_n(\mathcal{C}) 
\leq -\inf_{\tilde h \in \mathcal{C}} \tilde I_r(\tilde h) \qquad \forall\,\,\mathcal{C} \subseteq \tilde{\mathscr{W}} \text{ closed,}\\
&\liminf_{n \to \infty} \frac{1}{{n \choose 2}} \log \tilde{\mathbb{P}}_n(\mathcal{O}) 
\geq -\inf_{\tilde h \in \mathcal{O}} \tilde I_r(\tilde h) \qquad \forall\,\,\mathcal{O} \subseteq \tilde{\mathscr{W}} \text{ open,}
\end{aligned}
\end{equation}
where
\begin{equation}
\tilde I_r(\tilde h) = \inf_{\sigma \in \mathscr{M}} I_r(h^\sigma),
\end{equation} 
$h$ is any representative of $\tilde h$, and 
\begin{equation}
I_r(h):= \int_{[0,1^2]} {\rm d}x\,{\rm d}y\,\,\mathcal{R}(h(x,y)\,|\, r(x,y)),
\end{equation}
with
\begin{equation}
\mathcal{R}(a\,|\, b) := a \log \frac{a}{b} + (1-a) \log \frac{1-a}{1-b}.
\end{equation}
\end{theorem}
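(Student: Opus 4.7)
The plan is a three-step strategy combining a block approximation of the reference graphon $r$ with the classical LDP for homogeneous Erd\H{o}s-R\'enyi graphs. Since $(\tilde{\mathscr{W}},\delta_\square)$ is compact, exponential tightness is automatic, so the upper bound reduces to local statements around each point $\tilde h$. \emph{Step one:} I would first handle the case where $r$ is a step graphon with $k\times k$ equal-size blocks taking constant values $r_{ab}$. For $n$ a multiple of $k$, the edges within each block pair are i.i.d.\ Bernoulli$(r_{ab})$ and independent across block pairs. Applying the homogeneous LDP of \cite{CV11} block-wise together with independence yields an LDP for the joint vector of block-densities, with rate function equal to $I_r$ restricted to step graphons subordinate to the same partition; a weak-regularity argument then extends this to cut-metric neighborhoods of arbitrary target graphons, since any $h'$ close to $h$ in cut distance has block densities close to those of $h$.

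\emph{Step two:} for a general $r$, I would approximate it by its block averages $r_k$ on a $k\times k$ partition. Two ingredients are needed. First, the $\Gamma$-convergence $\tilde I_{r_k}\to\tilde I_r$ as $k\to\infty$, which follows from martingale convergence $r_k\to r$ a.e., joint convexity of $\mathcal{R}$, and dominated convergence, leveraging the integrability hypothesis on $r\log r$ and $(1-r)\log(1-r)$. Second, exponential equivalence of the laws $\tilde{\mathbb{P}}_n^{(r)}$ and $\tilde{\mathbb{P}}_n^{(r_k)}$ at speed $\binom{n}{2}$, obtained by coupling the edge indicators monotonically and controlling the expected number of mismatches by $n^2\,\|r-r_k\|_1$, which vanishes in the iterated limit $n\to\infty$, $k\to\infty$.

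\emph{Step three, lower bound via tilting:} for a target graphon $h$, I would introduce the alternative law $\mathbb{Q}_n$ of an inhomogeneous Erd\H{o}s-R\'enyi graph with reference $h$. The Radon-Nikodym derivative $\ddd\mathbb{P}_n/\ddd\mathbb{Q}_n$ is an explicit product of edge-wise likelihood ratios; on the event $\{\tilde h^{\hat G_n}\in B_\varepsilon(\tilde h)\}$ this derivative concentrates at $\eee^{-\binom{n}{2}(I_r(h)+o_\varepsilon(1))}$ by the law of large numbers for triangular arrays of Bernoullis, while $\mathbb{Q}_n(B_\varepsilon(\tilde h))\to 1$ follows from the upper bound applied to $\mathbb{Q}_n$ itself with $h$ playing the role of both reference and target. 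The hard part will be controlling these Radon-Nikodym derivatives and the exponential equivalence in step two when $r$ approaches $0$ or $1$ on positive-measure sets: the integrand $\mathcal{R}(\cdot\mid r)$ blows up there, and the hypothesis that $r\log r$ and $(1-r)\log(1-r)$ are integrable is exactly what is needed. I would handle this boundary behavior by truncating $r$ away from $\{0,1\}$ at a level $\delta>0$, running the argument on the truncated graphon, and letting $\delta\to 0$ using the $L^1$-control provided by the integrability hypothesis.
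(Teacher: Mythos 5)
First, note that the paper does not prove Theorem \ref{Thm:LDPIRG} at all: it is quoted from the literature (the stated version corresponds to \cite[Theorem 4.1]{DM20}, with earlier versions in \cite{DS19}, \cite{M20}, \cite{BCGPS20}), so there is no internal proof to compare against, and your block-approximation route is in any case different from the weak-convergence argument of \cite{DM20}; it is closest in spirit to \cite{M20} and \cite{GP21}. Your Steps two and three are sound in outline: the truncation of $r$ away from $\{0,1\}$ is indeed where the integrability of $r\log r$ and $(1-r)\log(1-r)$ enters, and the tilting lower bound works once the log-likelihood ratio is shown to concentrate under $\mathbb{Q}_n$ (which again requires the truncation, since the summands $\log(r_{ij}/h_{ij})$ are unbounded near the boundary). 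Two repairs are needed in Step two, though: controlling the \emph{expected} number of mismatches by $n^2\lVert r-r_k\rVert_{L_1}$ is not enough at speed ${n \choose 2}$ --- you need a Chernoff-type bound to upgrade the coupling to an exponentially good approximation in the sense of Dembo--Zeitouni, and you must then still identify the resulting rate function with $\tilde I_r$, which is where your $\Gamma$-convergence claim has to be made quantitative; moreover the edge probabilities are the grid values $r(i/n,j/n)$, which are not controlled by $L^1$-closeness of $r_k$ to $r$ without extra regularity or a redefinition of the model through block averages.

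The genuine gap is Step one, which is the heart of the matter and is essentially the Chatterjee--Varadhan argument assumed rather than proved. The LDP for the finite vector of block densities is elementary (Cram\'er-level; it does not require \cite{CV11}), but it does not yield an LDP for $\tilde h^{\hat G_n}$ in $(\tilde{\mathscr{W}},\delta_\square)$. For the upper bound on a cut-metric ball around $\tilde h$, the event only forces the block-density vector of the labelled empirical graphon into a neighbourhood of the set $\{$block densities of $h^\sigma:\sigma\in\mathscr{M}\}$, so the rate you obtain is the \emph{contracted} rate function, an infimum of $I_r$ over all graphons sharing such block densities; showing that this converges, as $k\to\infty$ and $\varepsilon\downarrow 0$, to $\inf_{B(\tilde h,\varepsilon)}\tilde I_r$ \emph{uniformly over the uncountable family of rearrangements} $h^\sigma$ is precisely the regularity-lemma covering argument of \cite{CV11} (and of \cite{GP21} for block models). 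Your one-line justification (``$h'$ close to $h$ in cut distance has close block densities'') does not do this work, and the alternative reading of Step one --- applying \cite{CV11} block-wise and invoking independence --- also does not assemble into an LDP for the unlabelled graphon, because the quotient by measure-preserving bijections of $[0,1]$ does not factorize over block pairs (and off-diagonal blocks would need a bipartite analogue of \cite{CV11} anyway). As written, the proposal therefore takes as input, for step reference graphons, essentially what it is meant to prove; to close the gap you should either take the block-model LDP of \cite{GP21} or \cite{BCGPS20} as the base case, or reproduce the regularity-lemma upper bound for cut-metric balls.
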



\subsection{Inhomogeneous random graphs with type dependence}
\label{sec:IRGRT}

Consider the following generalisation of the inhomogeneous Erd\H{o}s-R\'enyi random graph defined in Section \ref{sec:IHRG}. Suppose that each vertex $i \in [n]$ is assigned a (possibly random) {\it type} $\xin \in [0,1]$. Denote the \emph{empirical type measure} by
\begin{equation}\label{min}
\mu_n = \frac{1}{n} \sum_{i=1}^n \delta_{\xin}
\end{equation}
and the \emph{empirical type distribution}
\begin{equation}\label{Fn}
F_n({\bs X}^{(n)},x) = \frac{1}{n} \sum_{i=1}^n \mathbbm{1}\{\xin \leq x\},
\qquad x \in [0,1],
\end{equation}
where $\mathbbm{1}\{A\}$ is the indicator function of the event $A$ and ${\bs X}^{(n)}\equiv (X_1^{(n)},\ldots,X_n^{(n)})$. Let $\mathcal{M}([0,1])$ denote the space of measures on $[0,1]$ endowed with the topology of weak convergence. 

The way the graph is constructed is as follows. Whether or not an edge $(i,j)$ is active depends on {\it local} properties, namely, the types $X_i^{(n)}$ and $X_j^{(n)}$, as well as {\it global} properties, namely, the empirical type distribution $F_n({\bs X}^{(n)},\cdot\,)$. Concretely, we let edge $ij$ be active with probability 
\begin{equation}
H\Big(X_i^{(n)},X_j^{(n)},F_n\Big)\equiv H\Big(X_i^{(n)},X_j^{(n)},F_n({\bs X}^{(n)},\cdot\,)\Big),
\end{equation}
where $H\colon\, [0,1]^2\times \mathcal{M}([0,1]) \to [0,1]$ is symmetric in its first two inputs. Given ${\bs X}^{(n)}$, the edge placement is independent for all vertex pairs. We label the resulting sequence of random graphs as $\{G_n \}_{n \in \mathbb{N}}$, and refer to them as \emph{inhomogeneous random graphs with type dependence} (IRGT).

Two relations with inhomogeneous Erd\H{o}s-R\'enyi random graphs are worth mentioning:
\begin{itemize}
\item[$\circ$]
Observe that if 
\begin{equation}
\label{eq:ERcon}
X_i^{(n)} = \frac{i}{n} \quad \forall\,n \in \mathbb{N},\,i \in [n], 
\qquad H(x,y,F)=r(x,y) \quad \forall\,x,y \in [0,1],
\end{equation}
then the IRGT is equivalent to the inhomogeneous Erd\H{o}s-R\'{e}nyi random graph defined in Section \ref{sec:IHRG}. 
\item[$\circ$]
Let $\bar F$ denote the right-continuous generalised inverse of a distribution function $F$ with support $[0,1]$, which is defined in the usual way as 
\begin{equation}
\bar F(u) := \inf \{ x \in [0,1]\colon\,F(x) >u \}, \qquad u \in [0,1).
\end{equation}
For $F \in \mathcal{M}([0,1])$, define the \emph{induced reference graphon} $g^{[F]} \in \mathscr{W}$ by setting 
\begin{equation}
\label{eqn:SRGd}
g^{[F]}(x,y) = H\big(\bar F(x), \bar F(y),F\big).
\end{equation}
Given the type distribution $F_n$, $\tilde h^{G_n}$ has the same distribution as the inhomogeneous Erd\H{o}s-R\'{e}nyi random graph with reference graphon $g^{[F_n]}$. In other words, we have 
\begin{equation}
\label{ob:SS}
\tilde h^{G_n}\,|\, F_n \stackrel{\rm d}{=} \tilde h^{\hat G_n},\,r=g^{[F_n]}.
\end{equation}
This observation is central to the LDP for IRGTs stated in Theorem \ref{thm:LDPTP} below. 
\end{itemize}


\subsection{Key assumptions}
\label{sec:assumptions}

Before stating Theorem \ref{thm:LDPTP}, we make a number of assumptions. 

\begin{assumption}
\label{ass:LDPtypes}
{\rm The sequence of type distributions $(F_n({\bs X}^{(n)},\cdot\,))_{n \in \mathbb{N}}$ satisfies the LDP on $\mathcal{M}([0,1])$ with rate $\ell(n)$ and with rate function $K$.} \hfill$\diamondsuit$
\end{assumption}

When $X^{(n)}_i$, $i \in \mathbb{N}$, are fixed and $\mu_n \to \mu$ in $\mathcal{M}([0,1])$ as $n\to\infty$, as in \eqref{eq:ERcon}, then Assumption \ref{ass:LDPtypes} is satisfied with $\ell(n)=\infty$. Assumption \ref{ass:LDPtypes} holds, for instance, when $\{X^{(n)}_i\}_{n \in \mathbb{N}, i \in [n]}$ are i.i.d.\ random variables with distribution $f$, in which case $\ell(n)=n$ and $K(f^\circ) = {\mathscr H}(f^\circ\,|\, f)$ is the relative entropy (or Kullback-Leibler divergence) of $f^\circ$ with respect to $f$. Assumption \ref{ass:LDPtypes} may also hold with $\ell(n)$ not scaling linearly in $n$. 

To provide an example, we extend the setup in \cite[Example 2.5]{DM20}. Let $p>0$, $f$ be a probability distribution on $\mathbb{R}$ with bounded support, and $\{ Y_i^{(n)}\}_{i \in [\lfloor n^p \rfloor]}$ be i.i.d.\ random variables with distribution $f$. Let $s_n$ be such that 
\begin{equation}
s_n(x)=Y_i^{(n)}, \qquad x \in \left( \frac{i-1}{\lfloor n^p \rfloor}, \frac{i}{\lfloor n^p \rfloor} \right),
\end{equation} 
and identify $s_n$ with its periodic extension to $\mathbb{R}$.
Let $\rho$ be a smooth convolution kernel with compact support, and define 
\begin{equation}
X_i^{(n)} = \int_{\mathbb{R}} {\rm d}y\,\rho\left(\frac{i}{\lfloor n^p \rfloor} - y\right) s_n(y).
\end{equation}
In this case $\ell(n)=n^p$ and 
\begin{equation}
K(v)= \inf_{f^\circ} \left\{{\mathscr H}(f^\circ\,|\,f): v(x)=\int_{\mathbb{R}} {\rm d}y\, \rho(x-y) f^\circ(y), \, x \in \mathbb{R}\right\}.
\end{equation}
We refer the reader to \cite[Example 2.5]{DM20} for the arguments underlying this result.

\begin{assumption}
\label{ass:RG}
{\rm The function $F \mapsto g^{[F]}$ defined in \eqref{eqn:SRGd} is a continuous mapping from $\mathcal{M}([0,1])$ to $(\mathscr{W}, \lVert \cdot \rVert_{L_1})$.} \hfill $\diamondsuit$ 
\end{assumption}

Assumption \ref{ass:RG} holds, for example, when $H(x,y,F) \equiv H^*(x,y)$ (i.e., there is no dependence on the type distribution) and $H^*\colon\, [0,1]^2 \to [0,1]$ is a continuous function. Assumption \ref{ass:RG} also holds when, in addition, $f\colon\, \mathcal{M}([0,1]) \to [0,1]$ and $h\colon\, [0,1]^2 \to [0,1]$ are continuous functions, and 
\begin{equation}
H(x,y;F) = h( H^*(x,y), f(F)) \qquad \forall\,\, [x,y] \in [0,1]^2,\,F\in \mathcal{M}([0,1]).
\end{equation}

In certain settings we require two further assumptions that are of a more technical nature.

\begin{assumption}
\label{ass:AB}
{\rm For all $F \in \mathcal{M}([0,1])$, the induced graphon $g^{[F]}$ is \emph{away from the boundary}, i.e., 
\begin{equation}
\eta \leq g^{[F]}(x,y) \leq 1- \eta \qquad \forall\,\, (x,y) \in [0,1]^2,
\end{equation}
for some $\eta >0$.}\hfill$\diamondsuit$
\end{assumption}

\begin{assumption}
\label{ass:UZ}
{\rm The rate function $K$ has a unique zero, labelled $F^*$.}\hfill$\diamondsuit$
\end{assumption}


\subsection{LDP for IRGTs}
\label{sec:LDP-IRGT}

Let 
\begin{equation}
J(\tilde h) = \inf_{F \in \mathcal{M}([0,1]): \tilde g^{[F]}=\tilde h} K(F)
\end{equation}
and recall that $(r, \tilde h) \mapsto I_{r}(\tilde h)$ is a function from $\mathscr{W} \times \tilde{\mathscr{W}}$ to $\mathbb{R}_+$.
We are now ready to state our LDP for IRGTs.

\begin{theorem}
\label{thm:LDPTP}
Subject to Assumptions \ref{ass:LDPtypes} and \ref{ass:RG} the following hold:
\begin{itemize}
\item[(i)] 
If $\ell(n)=o\left( {n \choose 2} \right)$, then $\{ \tilde h^{\hat G_n} \}$ satisfies the LDP with rate $\ell(n)$ and with rate function $I^*(\tilde h)=J(\tilde h)$. 
\item[(ii)] 
If $\lim_{n \to \infty} \ell(n)/{n \choose 2} =c$ and Assumption \ref{ass:AB} holds as well, then $\{ \tilde h^{\hat G_n} \}$ satisfies the LDP with rate ${n \choose 2}$ and with rate function $I^*(\tilde h)=\inf_{g \in \tilde{\mathscr{W}}}[c J(\tilde g)+ I_g(\tilde h)]$, where $g$ is any representative of $\tilde g$.
\item[(iii)] 
If ${n \choose 2}=o(\ell(n))$ and Assumptions \ref{ass:AB} and \ref{ass:UZ} hold as well, then $\{ \tilde h^{\hat G_n} \}$ satisfies the LDP with rate ${n \choose 2}$ and with rate function $I^*(\tilde h)=I_{g^{[F^*]}}(\tilde h)$. 
\end{itemize}
\end{theorem}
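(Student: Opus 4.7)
The cornerstone of the argument is the conditional identity \eqref{ob:SS}: given the empirical type distribution $F_n$, the random graphon $\tilde h^{G_n}$ has the same law as the empirical graphon $\tilde h^{\hat G_n}$ of an inhomogeneous Erd\H os--R\'enyi graph with reference $g^{[F_n]}$. This reduces the problem to a two-level LDP, with the type distribution producing fluctuations at rate $\ell(n)$ (Assumption \ref{ass:LDPtypes}) and the conditional edge placement producing fluctuations at rate $\binom{n}{2}$ (Theorem \ref{Thm:LDPIRG}). The three regimes (i)--(iii) are then decided by which of these rates dominates. A preliminary ingredient used throughout is that the map $F \mapsto \tilde g^{[F]}$ is continuous from $\mathcal{M}([0,1])$ to $(\tilde{\mathscr{W}}, \delta_\square)$: this follows from Assumption \ref{ass:RG} together with $\delta_\square \le d_\square \le \lVert \cdot \rVert_{L_1}$.

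For part (i), the edges equilibrate exponentially faster than the types. A standard McDiarmid estimate for the cut distance (each edge changes $d_\square$ by at most $O(1/n^2)$) yields
\begin{equation}
\pp\bigl(\delta_\square(\tilde h^{G_n}, \tilde g^{[F_n]}) > \delta \,\big|\, F_n\bigr) \le 2\eee^{-c\delta^2 n^2}
\end{equation}
uniformly in $F_n$, so $\tilde h^{G_n}$ and $\tilde g^{[F_n]}$ are exponentially equivalent at rate $\ell(n) = o(\binom{n}{2})$. The contraction principle applied to the continuous map $F \mapsto \tilde g^{[F]}$ transfers the LDP for $F_n$ into an LDP for $\tilde g^{[F_n]}$ with rate function $J(\tilde h) = \inf_{F:\,\tilde g^{[F]} = \tilde h} K(F)$, and exponential equivalence extends this to $\tilde h^{G_n}$.

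For part (ii), where $\ell(n)/\binom{n}{2} \to c$, the plan is to establish the joint LDP for $(F_n, \tilde h^{G_n})$ on $\mathcal{M}([0,1]) \times \tilde{\mathscr{W}}$ at rate $\binom{n}{2}$ with rate function $cK(F) + I_{g^{[F]}}(\tilde h)$, and then contract onto the second coordinate to produce $I^*(\tilde h) = \inf_{\tilde g}[cJ(\tilde g) + I_g(\tilde h)]$. Decomposing via conditioning gives $\pp(F_n \in A,\, \tilde h^{G_n} \in \mathcal{C}) = \ee[\mathbf{1}_{F_n \in A} \mathbb{Q}_{g^{[F_n]},n}(\mathcal{C})]$, where $\mathbb{Q}_{r,n}$ denotes the law of $\tilde h^{\hat G_n}$ with reference $r$. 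For the upper bound I would cover the space of types by a fine mesh and apply Theorem \ref{Thm:LDPIRG} cell by cell, combined with the LDP upper bound for $F_n$; for the lower bound I would pick a near-optimiser $(F^\circ, \tilde h^\circ)$, use continuity of $F \mapsto g^{[F]}$ in $L_1$ to replace $g^{[F_n]}$ by $g^{[F^\circ]}$ inside a small neighbourhood of $F^\circ$, and apply the lower bounds of the two LDPs simultaneously.

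Finally, for part (iii), the types concentrate on $F^*$ at rate $\ell(n) \gg \binom{n}{2}$, so for any open neighbourhood $U$ of $F^*$ one has $\pp(F_n \notin U) \le \eee^{-\alpha \ell(n)}$ for some $\alpha > 0$ by Assumption \ref{ass:UZ}. On $\{F_n \in U\}$, continuity of $F \mapsto g^{[F]}$ in $L_1$ makes $g^{[F_n]}$ close to $g^{[F^*]}$, and a direct estimate on the total variation distance between the two corresponding product measures of Bernoulli edges---bounded in terms of $\lVert g^{[F_n]} - g^{[F^*]} \rVert_{L_1}$ with constants controlled by Assumption \ref{ass:AB} to prevent blow-up of log-ratios near $0$ or $1$---shows that $\tilde h^{G_n}$ is exponentially equivalent at rate $\binom{n}{2}$ to $\tilde h^{\hat G_n}$ with reference $g^{[F^*]}$, whose LDP is given by Theorem \ref{Thm:LDPIRG}. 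I expect the main obstacle to be part (ii): the Erd\H os--R\'enyi LDP is a pointwise statement about a fixed reference graphon, and upgrading it to the \emph{uniform} bounds required for the joint LDP (over all references $g^{[F]}$ simultaneously) is the technical heart of the argument, which is precisely where Assumption \ref{ass:AB} is used to guarantee continuity and finiteness of $I_{g^{[F]}}$ in $F$.
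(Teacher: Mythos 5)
Your overall architecture coincides with the paper's: condition on $F_n$ via \eqref{ob:SS}, transfer the LDP for $F_n$ to $\tilde g^{[F_n]}$ by the contraction principle (the paper's Lemma \ref{Cor:Contr}), show that conditionally on $F_n$ the empirical graphon concentrates around its reference at scale $\binom{n}{2}$ (the paper's Lemmas \ref{lem:gIHR}--\ref{lem:EF}), and in the balanced regime run a covering/localisation argument over the compact space $\mathcal{M}([0,1])$, with Assumption \ref{ass:AB} supplying continuity of $r \mapsto I_r$ in $L_1$ (the paper invokes \cite[Lemma 2.3]{DS19}); your plan for (ii) via a joint LDP followed by contraction is essentially a repackaging of the paper's direct upper/lower bounds on the marginal, and you correctly identify that uniformity over reference graphons is where \ref{ass:AB} enters. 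One repairable omission in (i): McDiarmid only gives concentration of $\delta_\square(\tilde h^{G_n},\tilde g^{[F_n]})$ around its \emph{conditional mean}, so your displayed bound additionally requires $\mathbb{E}\bigl[\delta_\square(\tilde h^{G_n},\tilde g^{[F_n]})\mid F_n\bigr]\to 0$ uniformly in $F_n$, e.g.\ via the standard expected cut-norm estimate or a union bound over vertex subsets; the paper sidesteps this by deriving its Lemma \ref{lem:EF} from the LDP upper bound for varying reference graphons together with a compactness argument.

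The genuine flaw is in (iii). The total variation distance between the two conditional product Bernoulli laws is \emph{not} small and cannot be usefully bounded by $\lVert g^{[F_n]}-g^{[F^*]}\rVert_{L_1}$: with $\binom{n}{2}$ independent coordinates each perturbed by order $\varepsilon$, the TV distance tends to $1$, so no exponential equivalence statement can be extracted from a TV bound. What does work, and what your remark about log-ratios suggests you intend, is a pointwise likelihood-ratio bound: under Assumption \ref{ass:AB} each edge contributes at most $C_\eta\,|p_{ij}-q_{ij}|$ to the log Radon--Nikodym derivative, so on $\{F_n \in U\}$ the two conditional laws differ by a factor $\exp\{C_\eta\,\varepsilon\binom{n}{2}\}$, an error in the exponent that vanishes after letting $U \downarrow \{F^*\}$. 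Alternatively, couple the edges through common uniforms and show that the number of discrepancies exceeds $\delta\binom{n}{2}$ with probability at most $\eee^{-c(\delta,\varepsilon)\binom{n}{2}}$ with $c(\delta,\varepsilon)\to\infty$ as $\varepsilon \downarrow 0$; this yields exponentially good approximations, not exponential equivalence, but suffices. Either repair, combined with $\mathbb{P}(F_n \notin U) \leq \eee^{-\alpha \ell(n)}$ (superexponential at rate $\binom{n}{2}$ since $\binom{n}{2}=o(\ell(n))$, using Assumption \ref{ass:UZ} and lower semicontinuity of $K$ on the compact set $U^c$), recovers the statement, which the paper obtains instead by applying its varying-reference Lemma \ref{lem:gIHR} with $r_n \to g^{[F^*]}$.
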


To understand where Theorem \ref{thm:LDPTP} comes from, it is instructive to realize that two random mechanisms play a role, and that the dominant mechanism determines the rare event behavior. Concretely, think of simulating outcomes of $\tilde h^{\hat G_n}$ in two steps:
\begin{itemize}
\item[$\circ$] 
Simulate the types of the vertices, i.e., simulate the type distribution $F_n$.
\item[$\circ$] 
Simulate the edges given $F_n$, i.e., simulate $\tilde h^{\hat G_n}$ given the induced reference graphon $g^{[F_n]}$.
\end{itemize}
Due to Assumption \ref{ass:LDPtypes}, large fluctuations in Step 1 are governed by the LDP with rate $\ell(n)$ and with rate function $K(\cdot)$, whereas due to \eqref{ob:SS} large fluctuations in Step 2 are governed by the LDP with rate ${n \choose 2}$ and with rate function $I_{g^{[F_n]}}$. In particular, this implies that when $\ell(n)=o\left({n \choose 2}\right)$ large fluctuations in $\tilde h^{\hat G_n}$ are most likely to be caused by a rare event in Step~1, whereas when ${n \choose 2}=o(\ell(n))$ large fluctuations in $\tilde h^{\hat G_n}$ are most likely to be caused by a rare event in Step 2. The regime $\ell(n) \asymp {n \choose 2}$ can be viewed as `balanced', in the sense that large fluctuations in $\tilde h^{\hat G_n}$ are most likely to be caused by a combination of rare events in both Steps 1 and 2.
When $\ell(n) = o\left({n \choose 2}\right)$ we say that the IRGT exhibits \emph{vertex-level fluctuations}, whereas when ${n \choose 2}=o(\ell(n))$ we say that it exhibits \emph{edge-level fluctuations}.

The IRGT is of interest in its own right. However, our primary motivation for introducing the IRGT is that it can be generalized in a natural way to a stochastic process. The rough idea behind its dynamic counterpart is that at each point in time the distribution of the process corresponds to an IRGT. We will focus primarily on processes that exhibit vertex-level fluctuations.


\section{Graphon-valued processes}
\label{sec:GVP}

In Section \ref{sec:mod} we first introduce the graph-valued process of interest, which can be viewed as the dynamic counterpart of the model discussed in Section \ref{sec:IRG}. Section \ref{sec:example} describes an illustrative example. Section \ref{sec:GFvc} states the sample-path LDP for the graph-valued process under the assumption that the driving process satisfies an LDP. Section \ref{sec:GFvcalt} states the stochastic process limit for the graph-valued process under the assumption that the driving process satisfies a stochastic process limit.  The latter assumption is investigated in Appendix~\ref{appA}.  
 

\subsection{The model}
\label{sec:mod}

For a given time horizon $T$, let $(G_n(t))_{t \in [0,T]}$ denote our graph-valued process. This process is constructed as follows. Suppose that each vertex $i \in [n]$ has a type $X^{(n)}_i(t)$ that may fluctuate randomly over time. Let $(\mu_n(t))_{t \in [0,T]}$ denote the process of \emph{empirical type measures} defined by 
\begin{equation}
\mu_n(t) = \frac{1}{n} \sum_{i=1}^n \delta_{X_i^{(n)}(t)},
\end{equation}
i.e., the dynamic version of \eqref{min}. This process evolves autonomously, i.e., independently of the graph-valued process $(G_n(t))_{t \in [0,T]}$.

In addition, let  $(F_n(t; \cdot))_{t \in [0,T]}$ denote the process of \emph{empirical type distribution} defined by 
\begin{equation}
F_n(t;x) = \frac{1}{n} \sum_{i=1}^n \mathbbm{1}\{X_i^{(n)}(t) \leq x\}, \qquad x \in [0,1],
\end{equation}
i.e., the dynamic counterpart of \eqref{Fn}. The process $(\mu_n(t))_{t \in [0,T]}$ lives on $D(\mathcal{M}([0,1]),[0,T])$, the space of $\mathcal{M}([0,1])$-valued c\`adl\`ag paths. We suppose that, at any given time $t$, edge $ij$ is active with probability 
\begin{equation}
H\big(t; X^{(n)}_i(t), X^{(n)}_j(t), (F_n(t;\cdot))_{t \in [0,T]}\big),
\end{equation}
independently of all other edges at time $t$, of $X^{(n)}_i(t)$, $X^{(n)}_j(t)$, and of $(F_n(t;\cdot))_{t \in [0,T]}$, where 
\begin{equation}
H\colon\, [0,T] \times [0,1]^2 \times D(\mathcal{M}([0,1]),[0,T] ) \mapsto [0,1]. 
\end{equation}
This function $H$ gives rise to the \emph{induced reference graphon process} $g^{[F]}$, which, for $F \in D( \mathcal{M}([0,1]),[0,T])$, is characterised by 
\begin{equation}
\label{eq:Gtdef}
g^{[F]}(t;x,y) = H\big(t; \bar F(t;x), \bar F(t;y), (F(t; \cdot)_{t \in [0,T]})\big).
\end{equation}
Observe that, for any $t \in [0,T]$, given the outcome of the empirical type distribution $F_n(t,\cdot\,)$, the distribution of $\tilde h^{G_n(t)}$ corresponds to that of an inhomogeneous Erd\H{o}s-R\'{e}nyi random graph with reference graphon $g^{[F_n]}(t;\cdot, \cdot)$. In other words, for any $t \in [0,T]$,
\begin{equation}
\label{Ob:spIRG}
h^{G_n(t)} | F_n \stackrel{\rm d}{=} h^{\hat G_n}, \qquad r = g^{[F_n]}(t;\cdot),
\end{equation}
where $\hat G_n$ is the inhomogeneous Erd\H{o}s-R\'{e}nyi random graph defined in Section \ref{sec:IHRG}. We make the following assumption on the function $F \mapsto g^{[F]}$, which due to \eqref{eq:Gtdef} is an assumption on $H$.

\begin{assumption}
\label{ass:CP}
{\rm The map $F \mapsto g^{[F]}$ from $D(\mathcal{M}([0,1]),[0,T],)$ to $D((\mathscr{W},\lVert \cdot \rVert_{L_1}),[0,T])$ is continuous.}\hfill$\diamondsuit$
\end{assumption}


\subsection{An illustrative example}
\label{sec:example}

Suppose that $(G_n(t))_{t \in [0,T]}$ is characterised by the following dynamics:
\begin{itemize}
\item 
$G_n(0)$ is the empty graph.
\item 
Each vertex is assigned an independent Poisson clock with rate $\gamma$, i.e., the time intervals between two consecutive ring times are exponentially distributed with parameter $\gamma$. Each time the clock attached to vertex $v$ rings, all the edges are adjacent to $v$ become inactive.
\item 
If the edge $ij$ is inactive, then it becomes active at rate $\lambda$, independently of anything else.
\end{itemize}

We first describe the associated \emph{driving process}. Let $\{\tau_k(v)\}_{k \in \mathbb{N}}$ denote the sequence of times at which the Poisson clock attached to vertex $v$ rings, and let 
\begin{equation}
Y_v(t) := t -  \max_k \{ \tau_k (v)\colon\, \tau_k(v) \leq t \}\vee 0
\end{equation}  
denote the time since the clock last rung. The value of $Y_v(t)$ can be thought of as the \emph{age of vertex $v$ at time $t$}: each time the clock associated with $v$ rings, it dies and all its adjacent edges are lost. Recalling that we assumed that types take values in $[0,1]$, we write 
\begin{equation}\label{eq:XvD}
X_v(t) := F^{{\rm exp}}(Y_v(t))=1 - \eee^{-\gamma Y_v(t)}
\end{equation}
to denote the \emph{type of vertex $v$ at time $t$}, where $F^{{\rm exp}}(\cdot)$ can be interpreted as the distribution function of an exponential random variable with rate $\gamma$.

The function $H(t;u,v,F)$ can now also be identified. The probability that there is an active edge between vertices of ages $\bar u$ and $\bar v$ is $1- \exp\{ -\lambda (\bar u \wedge \bar v) \}$. Putting $u = F^{{\rm exp}}(\bar u)$ and $v = F^{{\rm exp}}(\bar v)$, we obtain, using that $\bar u=-\log(1-u)/\gamma$ and $\bar v=-\log(1-v)/\gamma$,
\begin{equation}\label{eq:IEH}
H(t;u,v,F) =1- \exp\left(\lambda \left(\frac{1}{\gamma}\log(1-u \wedge v\right)\right)= 1 - (1 - u \wedge v)^{\lambda/\gamma}.
\end{equation}
Because $H(t;u,v,F)$ is a continuous function of $u$ and $v$, and is independent of $t$ and $F$, it is straightforward to verify that Assumption \ref{ass:CP} holds. A more involved example is given in Section \ref{Sec:APP1}.


\subsection{Sample-path large deviations}
\label{sec:GFvc}

Similarly as in Section \ref{sec:LDP-IRGT}, we assume that the driving process satisfies the LDP (which for the above illustrative example is established in Lemma \ref{lem:LDPDP}). 

\begin{assumption}
\label{ass:DP}
{\rm $\{F_n\}_{n \in \mathbb{N}}$ satisfies the LDP on $D( \mathcal{M}([0,1]),[0,T])$ with rate $\ell(n)=o({n \choose 2})$ and with rate function $K$.}\hfill$\diamondsuit$
\end{assumption}

To establish the sample-path LDP for the graphon-valued process $\{(\tilde h^{\hat G_n(t)})_{t \geq 0}\}_{n \in \mathbb{N}}$, we need to: 
\begin{itemize}
\item[(I)] 
establish the LDP in the pointwise topology; 
\item[(II)] 
strengthen this topology by establishing exponential tightness. 
\end{itemize}

Step (I) is settled by the following result. For $\tilde h\in D((\tilde{\mathscr{W}}, \delta_\square), [0,T])$, let 
\begin{equation}
    J(\tilde h) = \inf_{F \in D(\mathcal{M}([0,1]),[0,T]) : \tilde g^{[F]}=\tilde h} K(F).
\end{equation}

\begin{proposition}
\label{pr:point}
If Assumptions \ref{ass:CP} and \ref{ass:DP} hold, then the sequence $((\tilde h^{ G_n(t)})_{t \geq 0})_{n \in \mathbb{N}}$ satisfies the LDP in the pointwise topology with rate $\ell(n)$ and with rate function $J(\tilde h)$.
\end{proposition}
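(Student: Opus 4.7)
\emph{Step 1 (Contraction for the driving process).} The overall plan is to combine the contraction principle, applied to the driving process, with an exponential-equivalence argument based on the conditional concentration of inhomogeneous Erd\H{o}s-R\'enyi graphons around their reference. First I would exploit that, by Assumption~\ref{ass:CP}, the map $F \mapsto g^{[F]}$ is continuous from $D(\mathcal{M}([0,1]),[0,T])$ to $D((\mathscr{W},\lVert\cdot\rVert_{L_1}),[0,T])$. Composing with the quotient $h \mapsto \tilde h$ (continuous because $\delta_\square \le d_\square \le \lVert\cdot\rVert_{L_1}$) and then with the inclusion into the coarser pointwise topology on $D(\tilde{\mathscr{W}},[0,T])$ yields a continuous map, so the contraction principle applied to Assumption~\ref{ass:DP} produces an LDP for $(\tilde g^{[F_n]}(t;\cdot))_{t}$ at rate $\ell(n)$ with rate function $J$.

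\emph{Step 2 (Conditional concentration).} The next step would be to establish
\begin{equation*}
\mathbb{P}\bigl(\delta_\square(\tilde h^{G_n(t)}, \tilde g^{[F_n]}(t;\cdot)) > \varepsilon \bigm| F_n\bigr) \le \exp(-c_\varepsilon n^2), \qquad n \ge n_0(\varepsilon),
\end{equation*}
uniformly in the realisation of $F_n$ and in $t \in [0,T]$. Via the conditional identity \eqref{Ob:spIRG}, this reduces to cut-distance concentration of the graphon of an inhomogeneous Erd\H{o}s-R\'enyi random graph around its reference, which follows from McDiarmid's bounded-differences inequality (flipping a single edge moves $d_\square$ by at most $2/n^2$) together with a discretisation of the supremum over $(S,T)$ in the definition of $d_\square$ to control the mean. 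Because $\ell(n) = o\bigl(\binom{n}{2}\bigr)$, the bound is superexponential on the LDP scale $\ell(n)$.

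\emph{Step 3 (Projective limit and main obstacle).} Finally, for any $0 \le t_1 < \cdots < t_k \le T$, a union bound in Step~2 gives exponential equivalence at rate $\ell(n)$ of $(\tilde h^{G_n(t_j)})_{j=1}^{k}$ and $(\tilde g^{[F_n]}(t_j;\cdot))_{j=1}^{k}$ in any product metric on $\tilde{\mathscr{W}}^k$; both vectors then obey the finite-dimensional LDP obtained by projecting Step~1 onto the coordinates $(t_1,\ldots,t_k)$. Since the pointwise topology on $D(\tilde{\mathscr{W}},[0,T])$ is the projective limit of its finite-dimensional coordinate projections, the Dawson-G\"artner theorem lifts this family to a full LDP at rate $\ell(n)$, whose rate function, by uniqueness, must equal $J$. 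The principal obstacle is Step~2 and in particular the uniformity of the concentration estimate in the conditioning $F_n$: this is exactly where the hypothesis $\ell(n) = o\bigl(\binom{n}{2}\bigr)$ enters essentially, since the gap between the edge-level scale $n^2$ and the LDP scale $\ell(n)$ is what renders the edge-level randomness exponentially negligible and collapses the behaviour of $\tilde h^{G_n}$ onto that of the driving process.
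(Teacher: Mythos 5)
Your proposal follows the same architecture as the paper's proof: an LDP for the induced reference graphon process via the contraction principle (the paper's Lemma \ref{lem:CMTFlem}), a concentration estimate showing that $\tilde h^{G_n(t)}$ is within $\varepsilon$ of $\tilde g^{[F_n]}(t)$ except on an event of probability $\exp(-c_\varepsilon n^2)$, which is negligible at scale $\ell(n)$ precisely because $\ell(n)=o\binom{n}{2}$, then a finite-dimensional LDP and the Dawson--G\"artner projective limit theorem to pass to the pointwise topology. The one genuine difference is how the concentration step is established: the paper obtains it from the LDP upper bound for inhomogeneous Erd\H{o}s--R\'enyi graphons (Lemmas \ref{lem:gIHR}--\ref{lem:EF}), i.e.\ a Pinsker-type lower bound $I_r(h)\geq d_\square(h,r)^2$ combined with a compactness/subsequence argument in $\mathcal{M}([0,1])$ to get uniformity over the random reference $g^{[F_n]}$, whereas you prove it directly by bounded differences/Hoeffding together with the observation that, since both $h^{G_n(t)}$ and $g^{[F_n]}(t)$ are block functions on the same $n$-partition, the cut-norm supremum reduces to a union bound over the $4^n$ pairs of vertex subsets. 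Your route is more elementary and delivers uniformity in the conditioning $F_n$ (and in $t$) for free, avoiding the contradiction argument of Lemma \ref{lem:EF}; the paper's route reuses machinery it needs anyway for Theorem \ref{thm:LDPTP}. Two small caveats, neither of which puts you below the paper's own level of detail: your Step 1 treats the pointwise topology as coarser than the topology on $D(\mathcal{M}([0,1]),[0,T])$ in Assumptions \ref{ass:CP}--\ref{ass:DP}, which is delicate if the latter is Skorokhod (point evaluations are not continuous there) --- the paper's Lemma \ref{lem:CMTFlem} glosses the same point; and identifying the Dawson--G\"artner rate function with $J$ deserves the explicit supremum-representation argument (the paper cites \cite[Lemma 4.6.5]{DZ98}) rather than a bare appeal to uniqueness.
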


Note that Proposition \ref{pr:point} does not refer to any edge-switching dynamics. Specifically, if two process $\{ G_n \}_{n \in \mathbb{N}}$ and $\{G^*_n \}_{n \in \mathbb{N}}$ have a common sequence of types $( (X_i(t))_{t \geq 0})_{i \in [n]}$ and a common edge-connection function $H$, then the marginal distributions are equivalent, i.e.,
\begin{equation}
\tilde h^{G_n(t)} \stackrel{\rm d}{=} \tilde h^{G^*_n(t)}, \qquad t\in [0,T].
\end{equation}
However, this does \emph{not} necessarily mean that the joint distributions are equivalent, i.e., we may have
\begin{equation}
\big(\tilde h^{ G_n(t)}\big)_{t \in [0,T]} \stackrel{\rm d}{\neq} \big(\tilde h^{ G^*_n(t)}\big)_{t \in [0,T]},
\end{equation}
because these depend on the specific edge-switching dynamics. Nonetheless, Proposition \ref{pr:point} implies that both  $\{ G_n \}_{n \in \mathbb{N}}$ and $\{  G^*_n \}_{n \in \mathbb{N}}$ satisfy equivalent LDPs in the pointwise topology, i.e., the rate function depends only on the marginal distributions of the process and not on the specific edge-switching dynamics. In Sections \ref{Sec:APP1} and \ref{sec:ExDS1} we provide examples of processes with equivalent marginals and different edge-switching dynamics. 

The specific edge-switching dynamics \emph{do} need to be taken into consideration when we want to perform step (II), i.e., strengthen the topology of the LDP in Proposition \ref{pr:point} by establishing exponential tightness. We next provide a condition that can be used to verify that $\{ \tilde h^{G_n} \}_{n \in \mathbb{N}}$ are exponentially tight. Let 
\begin{equation}
E^{(n)}_{ij}(t) = 
\begin{cases}
1, \quad &\text{if edge $ij$ is active at time $t$,} \\
0, \quad &\text{otherwise},
\end{cases}
\end{equation}
and define
\begin{equation}\label{cndef}
C_n(t, \delta) = \sum_{1 \leq i < j \leq n} \sup_{t \leq u \leq v \leq t+\delta} |E^{(n)}_{ij}(u)-E^{(n)}_{ij}(v)|.
\end{equation}
In other words, $C_n(t,\delta)$ is the number of edges that change (i.e., go from active to inactive or vice versa) at some time between $t$ and $t+\delta$.

\begin{proposition}
\label{lem:TC}
If, for all $t \in [0,T]$ and $\varepsilon>0$,
\begin{align}
\label{eq:Tcon}
\lim_{\delta \downarrow 0} \limsup_{n \to \infty} \frac{1}{\ell(n)} \log \mathbb{P}\left(C_n(t, \delta) > \varepsilon {n \choose 2}\right)= - \infty,
\end{align}
then $((\tilde h^{G_n(t)})_{t \geq 0})_{n \in \mathbb{N}}$ is exponentially tight.
\end{proposition}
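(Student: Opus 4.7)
The target is exponential tightness of $((\tilde h^{G_n(t)})_{t \in [0,T]})_{n \in \mathbb{N}}$ at speed $\ell(n)$ in the Skorokhod space of $(\tilde{\mathscr{W}}, \delta_\square)$-valued c\`adl\`ag paths. Since $(\tilde{\mathscr{W}}, \delta_\square)$ is compact, compact containment is automatic, so by the standard Puhalskii criterion it suffices to prove that, for every $\varepsilon > 0$,
\[
\lim_{\delta \downarrow 0} \limsup_{n \to \infty} \frac{1}{\ell(n)} \log \pp\big(w(\tilde h^{G_n}, \delta) > \varepsilon\big) = -\infty,
\]
where $w(f, \delta) := \sup_{|u - v| \leq \delta} \delta_\square(f(u), f(v))$ is the uniform modulus of continuity.

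The first step is a pathwise bound on the modulus in terms of $C_n$. Using $\delta_\square \leq d_\square \leq \lVert \cdot \rVert_{L_1}$ together with the block structure of empirical graphons, one has
\[
\delta_\square\big(\tilde h^{G_n(u)}, \tilde h^{G_n(v)}\big) \leq \frac{2}{n^2} \sum_{1 \leq i < j \leq n} |E^{(n)}_{ij}(u) - E^{(n)}_{ij}(v)|.
\]
Taking the supremum over $u, v \in [t, t + \delta]$ and pulling it inside the sum yields
\[
\sup_{u, v \in [t, t + \delta]} \delta_\square\big(\tilde h^{G_n(u)}, \tilde h^{G_n(v)}\big) \leq \frac{2\, C_n(t, \delta)}{n^2} \leq \frac{C_n(t, \delta)}{{n \choose 2}}.
\]

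The second step discretises time. Partition $[0, T]$ into $K = \lceil T/\delta \rceil$ intervals of length $\delta$. Any pair $u, v \in [0, T]$ with $|u - v| \leq \delta$ lies inside $[k\delta, (k + 2)\delta]$ for some $k$, and hence
\[
w(\tilde h^{G_n}, \delta) \leq \max_{0 \leq k < K} \frac{C_n(k\delta, 2\delta)}{{n \choose 2}}.
\]
A union bound combined with $\log \sum_k a_k \leq \log K + \max_k \log a_k$ then gives
\[
\limsup_n \frac{1}{\ell(n)} \log \pp(w > \varepsilon) \leq \max_{0 \leq k < K} \limsup_n \frac{1}{\ell(n)} \log \pp\Big(C_n(k\delta, 2\delta) > \varepsilon {n \choose 2}\Big),
\]
since $(\log K)/\ell(n) \to 0$ for each fixed $\delta$.

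The main obstacle is the final step: sending $\delta \downarrow 0$ in this maximum, whose grid of evaluation points $\{k\delta\}$ itself depends on $\delta$. This requires the convergence in \eqref{eq:Tcon} to be uniform in $t \in [0,T]$, which I would obtain from the elementary monotonicity $C_n(t, \delta) \leq C_n(s, \delta + 2|t - s|)$ (since $[t, t + \delta] \subseteq [s - |t-s|, s + |t-s| + \delta]$). This lets one dominate each $C_n(k\delta, 2\delta)$ by a value of $C_n$ on a fixed coarser reference grid of finitely many times, to which \eqref{eq:Tcon} applies directly. Letting the reference spacing shrink then yields the $-\infty$ limit and completes the verification of Puhalskii's criterion.
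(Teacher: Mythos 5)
Your overall route is the one the paper takes: exponential tightness is reduced to a modulus-of-continuity estimate (the paper uses \cite[Theorem 4.1]{FK06} with the c\`adl\`ag modulus $w'$; your criterion with the uniform modulus $w$ is fine here, since $w'(\cdot,\delta,T)\le w(\cdot,\delta)$ by taking the uniform partition $t_i=i\delta$), compact containment is free because $(\tilde{\mathscr{W}},\delta_\square)$ is compact, the oscillation on each block is bounded via $\delta_\square\le\lVert\cdot\rVert_{L_1}$ by ${n \choose 2}^{-1}C_n(\cdot,\cdot)$, and the union bound over the $\lceil T/\delta\rceil$ blocks costs nothing at speed $\ell(n)$. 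Up to this point your argument coincides with the paper's proof.

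The extra step you append, however, does not close the issue you correctly identified. First, a small error: $C_n(t,\delta)\le C_n(s,\delta+2|t-s|)$ is false when $t<s$, because $[t,t+\delta]$ then protrudes to the left of $[s,s+\delta+2|t-s|]$; the usable inequality is $C_n(t,\delta)\le C_n(s,\delta+(t-s))$ for a reference point $s\le t$. More importantly, the reduction to a fixed finite reference grid is circular. If the reference grid has spacing $\eta$, the domination yields windows of length at least $\eta$ anchored at the reference points, so letting $\delta\downarrow 0$ only drives the window length down to $\eta$; but \eqref{eq:Tcon} at a fixed time gives information only as the window length at that time tends to $0$. To exploit it you must send $\eta\downarrow 0$, and then the grid points move with $\eta$, which is exactly the $\delta$-dependent-grid problem you set out to eliminate. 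In fact, the pointwise statement \eqref{eq:Tcon} plus the monotonicity of $C_n$ does not by itself yield the uniform-in-$t$ statement: edge changes can accumulate just to the \emph{left} of a time point, where windows $[t,t+\delta]$ anchored at any fixed $t$ never see them for small $\delta$, while windows anchored at suitably chosen $\delta$-dependent times always do. The paper does not attempt this upgrade: in \eqref{eq:wdid} it bounds $\mathbb{P}(w'>\varepsilon)$ by $\lceil T/\delta\rceil\,\sup_{t\in[0,T]}\mathbb{P}(C_n(t,\delta)>\varepsilon{n \choose 2})$ and then invokes \eqref{eq:Tcon}, i.e.\ it effectively reads the hypothesis uniformly in $t$ — and in the applications it is verified uniformly (compare Assumption \ref{ass:FLT} and the proof of Proposition \ref{prop:DED}, where the bound is on a supremum/maximum over $t$). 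So the correct repair of your write-up is to work with the uniform-in-$t$ form of \eqref{eq:Tcon} (which is what gets checked in practice), rather than to try to derive that uniformity from the pointwise statement via a shrinking reference grid.
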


Combining the above two propositions, we obtain the following.

\begin{theorem}
\label{thm:LDPmain}
If the conditions of Propositions \ref{pr:point} and \ref{lem:TC} are satisfied, then the sequence of processes $(\tilde h^{ G_n(t)})_{t \geq 0})_{n \in \mathbb{N}}$ satisfies the LDP on $D(\tilde{\mathscr{W}},[0,T])$ with rate $\ell(n)$ and with rate function $J(\tilde h)$.
\end{theorem}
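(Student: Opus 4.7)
\textbf{Proof proposal for Theorem \ref{thm:LDPmain}.}

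The strategy is the standard one for upgrading a pointwise LDP to a Skorokhod-level LDP: combine the pointwise LDP from Proposition \ref{pr:point} with the exponential tightness from Proposition \ref{lem:TC} via a Puhalskii-type subsequence extraction. No new calculation is required; the work lies in showing that the rate function obtained on subsequences must coincide with $J$.

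First, by Proposition \ref{pr:point}, the sequence $((\tilde h^{G_n(t)})_{t \in [0,T]})_{n \in \mathbb{N}}$ satisfies the LDP in the pointwise topology on $D(\tilde{\mathscr{W}},[0,T])$ with rate $\ell(n)$ and rate function $J$. Second, by Proposition \ref{lem:TC}, condition \eqref{eq:Tcon} implies exponential tightness in the Skorokhod topology on $D(\tilde{\mathscr{W}},[0,T])$, which is strictly finer than the pointwise topology. Now, given any subsequence $(n_k)$, Puhalskii's theorem produces a further subsequence $(n_{k_j})$ along which the LDP holds in the Skorokhod topology with some good rate function $\hat J$ (the level sets being compact in the Skorokhod topology thanks to exponential tightness). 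Since every pointwise-open set is Skorokhod-open and every pointwise-closed set is Skorokhod-closed, this subsequential Skorokhod-LDP immediately implies a subsequential pointwise-LDP with the same rate function $\hat J$. By the uniqueness of the rate function in the pointwise LDP already established in Proposition \ref{pr:point}, we conclude $\hat J=J$. Since every subsequence admits a further subsequence on which the Skorokhod-LDP holds with rate function $J$, the full sequence satisfies the LDP in the Skorokhod topology with rate function $J$.

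The delicate point in this scheme is the identification $\hat J = J$, which rests on the pointwise topology being separated enough to uniquely determine the rate function. Here it helps that $(\tilde{\mathscr{W}}, \delta_\square)$ is a compact metric space (hence Polish) and that c\`adl\`ag paths are determined by their values at all times via right-continuity, so the pointwise topology on $D(\tilde{\mathscr{W}},[0,T])$ is Hausdorff with a sufficient supply of open sets for the standard uniqueness argument for rate functions. Beyond this, the argument is a black-box application of Puhalskii's compactness theorem, with Propositions \ref{pr:point} and \ref{lem:TC} supplying exactly the two ingredients that theorem requires.
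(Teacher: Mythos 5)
There is a genuine gap in your identification step. Your argument rests on the claim that ``every pointwise-open set is Skorokhod-open and every pointwise-closed set is Skorokhod-closed'', i.e.\ that the product (pointwise) topology on $D(\tilde{\mathscr{W}},[0,T])$ is coarser than the Skorokhod topology in which Proposition \ref{lem:TC} yields exponential tightness. This is false: the evaluation maps $\pi_t\colon x \mapsto x(t)$ are not continuous for the Skorokhod $J_1$ topology at paths that jump at $t$. For instance, with $\tilde{\mathscr{W}}$ replaced by $[0,1]$ for illustration, $x_n=\mathbbm{1}_{[t+1/n,T]}$ converges to $x=\mathbbm{1}_{[t,T]}$ in the Skorokhod topology while $x_n(t)=0\not\to 1=x(t)$; hence a set of the form $\{x\colon \delta_\square(x(t),\tilde h_0)<\varepsilon\}$, which is pointwise-open, need not be Skorokhod-open. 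The two topologies are simply not comparable, so a subsequential Skorokhod LDP does not ``immediately imply'' a subsequential pointwise LDP, and the uniqueness argument that gives $\hat J=J$ does not go through as written. Note that the same obstruction blocks a direct appeal to the inverse contraction principle \cite[Corollary 4.2.6]{DZ98}, which also requires the topology carrying the exponential tightness to refine the topology carrying the LDP.

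The standard repair, which is what the combination of Propositions \ref{pr:point} and \ref{lem:TC} is implicitly invoking, is the dense-time-set machinery of \cite{FK06}: exponential tightness in $D((\tilde{\mathscr{W}},\delta_\square),[0,T])$ together with the finite-dimensional (multi-point) LDPs established in the proof of Proposition \ref{pr:point} --- which hold at \emph{all} finite collections of times, hence in particular along any dense set of times containing $T$ --- yields the sample-path LDP in the Skorokhod topology with rate function given by the supremum of the finite-dimensional rate functions over such time points (\cite[Theorem 4.28]{FK06}). One then identifies this supremum with the projective-limit rate function $J$ of Proposition \ref{pr:point}, using that c\`adl\`ag paths are continuous at all but countably many times, right-continuity, and the goodness/lower semicontinuity of the finite-dimensional rate functions (here compactness of $(\tilde{\mathscr{W}},\delta_\square)$ helps). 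Alternatively, you may keep your Puhalskii subsequence scheme, but the identification $\hat J=J$ must then be carried out through the finite-dimensional marginals, using that $\pi_{t_1},\dots,\pi_{t_k}$ are Skorokhod-continuous at every path continuous at $t_1,\dots,t_k$ and approximating general times from the right; the blanket topology-comparison shortcut is not available.
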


In view of Lemma \ref{lem:LDPDP}, the conditions of Theorem \ref{thm:LDPmain} can be readily verified for the illustrative example. In Theorem \ref{thm:CD} we establish a sample-path LDP for a class of processes that includes the illustrative example.


\subsection{Stochastic process convergence}
\label{sec:GFvcalt}

In the sequel, $\Rightarrow$ denotes convergence in distribution, and $\stackrel{\rm  fdd}{\Rightarrow}$ convergence of the associated finite-dimensional distributions. We assume that the empirical type distribution has a stochastic process limit.

\begin{assumption}
\label{ass:CD}
{\rm Suppose that $F_n \Rightarrow F$ as $n\to\infty$ on $D(\mathcal{M}([0,1]), [0,T])$.}\hfill$\diamondsuit$
\end{assumption}

We establish the stochastic process limit of $(h^{G_n(t)})_{t \in [0,T]}$ as $n\to\infty$ on $D((\mathscr{W},d_{\square}), [0,T])$, i.e., we no longer take the quotient with respect to the equivalence relation $\sim$. To establish a stochastic process limit in this finer topology, as explained below, we need to ensure that the labels of the vertices update dynamically.

\begin{assumption}\label{ass:VL}
At any time $t \in [0,1]$ the labels of the vertices are such that
\begin{equation}
X_1(t) \leq \dots \leq X_n(t).
\end{equation}
\hfill$\diamondsuit$
\end{assumption}

The importance of Assumption \ref{ass:VL} is illustrated in Figure \ref{MComp}, where we consider the illustrative example from Section \ref{sec:example} with $t=1$, $\lambda=6$ and $\gamma=3$. The left panel shows an outcome of $h^{G_{100}(1)}$ with a \emph{static labelling}, where vertices are labelled arbitrarily at time $t=0$ and their labels do not change over time. We observe that $h^{G_{100}(1)}$ has no discernible structure, and so with probability 1 the sequence $(h^{G_n(1)})_{n \in \mathbb{N}}$ \emph{does not converge to a limit in $(\mathscr{W}, d_\square)$}. The center panel of Figure \ref{MComp} illustrates an outcome of $h^{G_{100}(1)}$ under the \emph{dynamic labelling} given in Assumption \ref{ass:VL}. Under this assumption $h^{G_{100}(1)}$ has a discernible structure and $h^{G_n(1)} \to g$ in $(\mathscr{W}, d_\square)$ as $n \to \infty$, where $g$ is the smooth graphon illustrated in the right panel of Figure \ref{MComp}.

\begin{figure}
\begin{center}
\includegraphics[width=4.96cm]{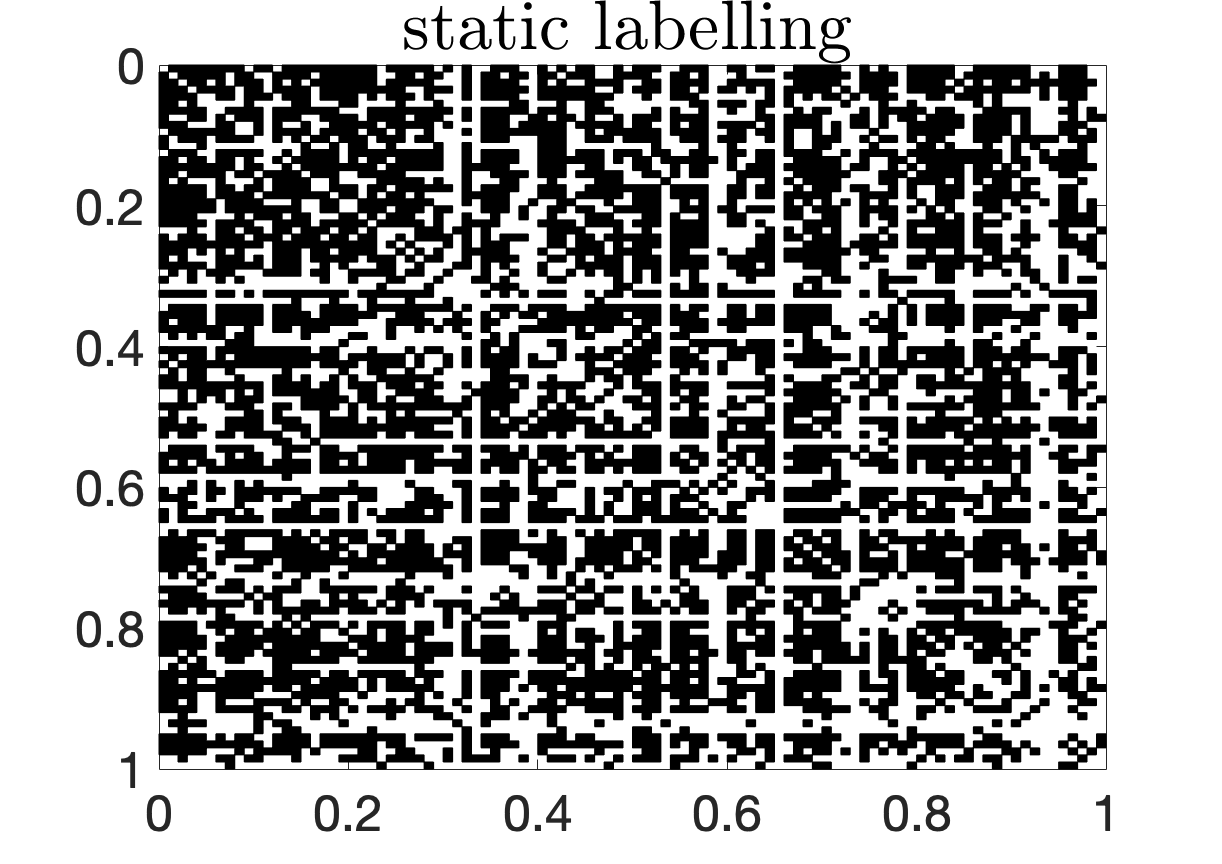}
\includegraphics[width=4.96cm]{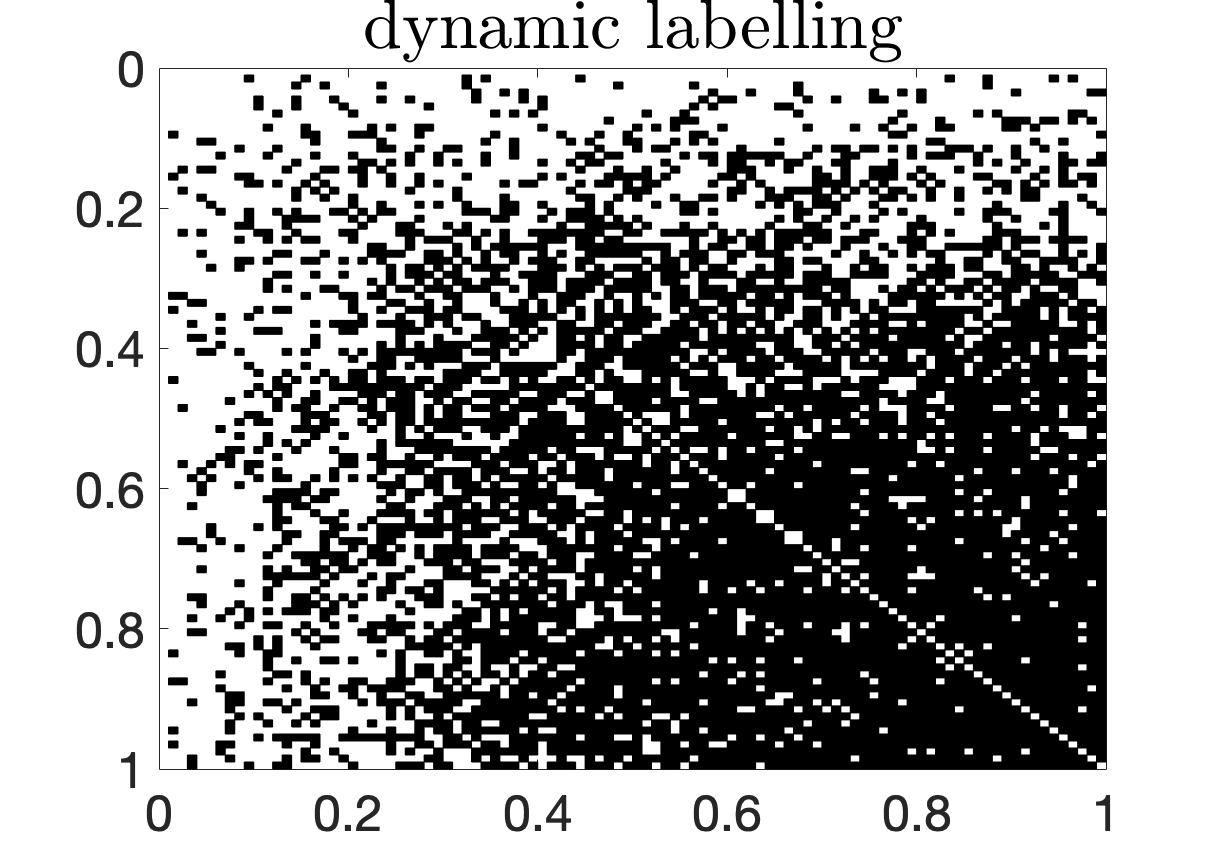}
\includegraphics[width=4.96cm]{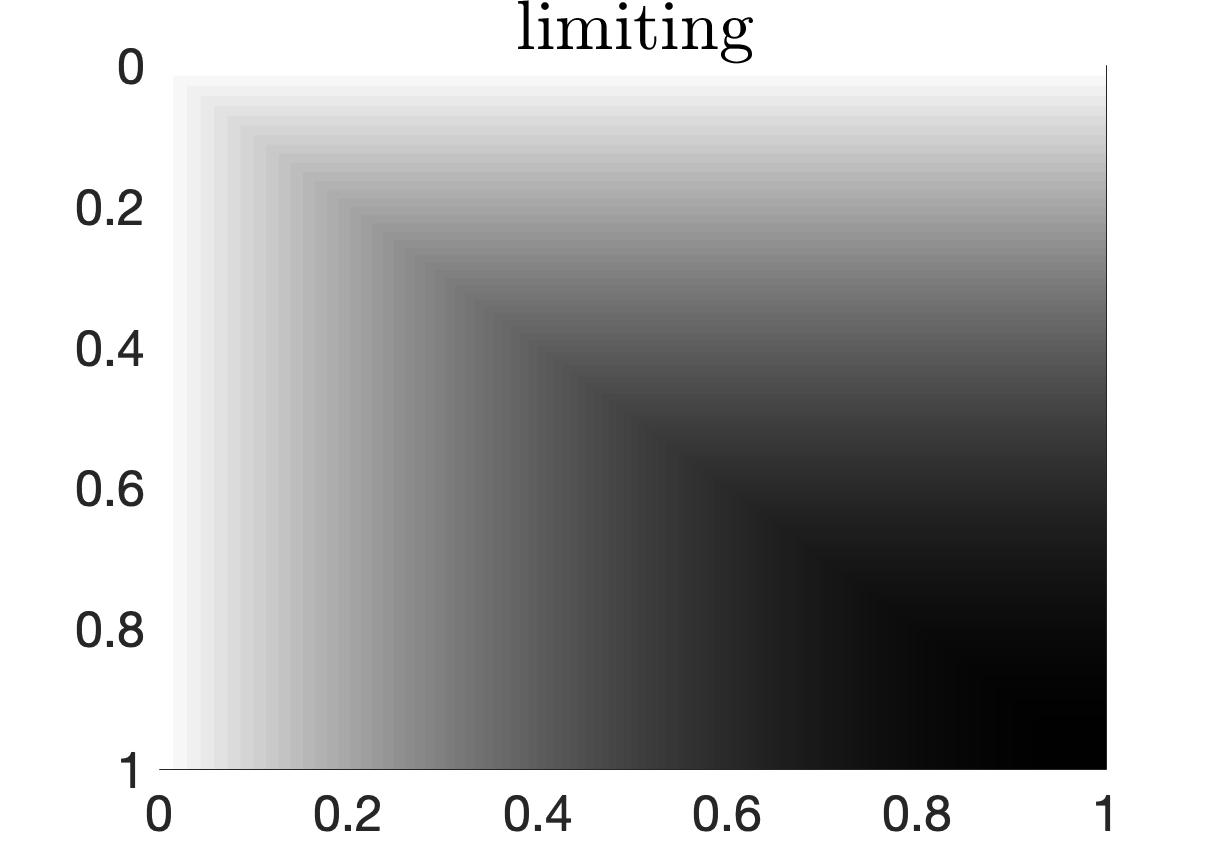}
\end{center}
\caption{\label{MComp} An illustration of $h^{G_{100}(1)}$ with static labelling (left panel) and dynamic labelling (center panel), and the corresponding limit for the dynamic labelling in $(\mathcal{W}, d_\square)$ (right panel). In this figure black corresponds to the value 1 and white corresponds to the value 0.}
\end{figure}

Given the dynamic labelling in Assumption \ref{ass:VL} and the illustrative example, the motivation behind establishing our stochastic process limits on $D((\mathscr{W},d_{\square}), [0,T])$ rather than on $D((\tilde{\mathscr{W}},\delta_{\square}), [0,T])$ is clear: it provides insight into the natural question `Are the older vertices more connected than the younger vertices?' If we establish a limit in $D((\mathscr{W},d_{\square}), [0,T])$, then we have a definitive answer, whereas if we establish a limit in $D((\tilde{\mathscr{W}},\delta_{\square}), [0,T])$, then we do not gain any insight.

\begin{proposition}
\label{prop:fdd}
If Assumptions \ref{ass:CP}, \ref{ass:CD} and \ref{ass:VL} hold, then $h^{G_n} \stackrel{\rm fdd}{\Rightarrow} g^{[F]}$ as $n\to\infty$. 

\end{proposition}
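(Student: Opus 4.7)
The plan is to combine two ingredients via the triangle inequality in the cut distance: (a) weak convergence of the induced reference graphon process, $g^{[F_n]} \Rightarrow g^{[F]}$, obtained from Assumptions \ref{ass:CP} and \ref{ass:CD}; and (b) a conditional concentration estimate showing $d_\square(h^{G_n(t)}, g^{[F_n]}(t;\cdot,\cdot)) \to 0$ in probability, in which Assumption \ref{ass:VL} plays the crucial role of aligning the block structures of the empirical graphon and its mean.

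For (a), Assumption \ref{ass:CD} gives $F_n \Rightarrow F$ in $D(\mathcal{M}([0,1]),[0,T])$, and Assumption \ref{ass:CP} supplies the continuity of $F \mapsto g^{[F]}$, so the continuous mapping theorem yields $g^{[F_n]} \Rightarrow g^{[F]}$ in $D((\mathscr{W},\|\cdot\|_{L_1}),[0,T])$. By Skorokhod representation I pass to a probability space on which this convergence is almost sure. Fixing any finite set of times $t_1, \dots, t_k$ that are continuity points of the limit path $g^{[F]}$, I obtain $g^{[F_n]}(t_\ell;\cdot,\cdot) \to g^{[F]}(t_\ell;\cdot,\cdot)$ in $\|\cdot\|_{L_1}$, hence also in $d_\square$.

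For (b), the point of Assumption \ref{ass:VL} is that, with $X_1(t) \leq \dots \leq X_n(t)$, the generalised inverse satisfies $\bar F_n(t;u) = X_i(t)$ for $u \in ((i-1)/n, i/n)$, so that the $(i,j)$-block value of $g^{[F_n]}(t;\cdot,\cdot)$ coincides exactly with the conditional edge probability $H(t; X_i(t), X_j(t), F_n)$ between vertices $i$ and $j$ in $G_n(t)$ prescribed by \eqref{Ob:spIRG}. Conditional on $F_n$, the difference $h^{G_n(t_\ell)} - g^{[F_n]}(t_\ell;\cdot,\cdot)$ is therefore a block graphon whose entries are independent centred Bernoulli deviations, and a standard cut-norm concentration estimate for such matrices (a McDiarmid-type inequality applied to the cut-norm functional, or the inhomogeneous version of the sampling lemma in \cite{LSb}) yields $d_\square(h^{G_n(t_\ell)}, g^{[F_n]}(t_\ell;\cdot,\cdot)) = O(n^{-1/2})$ with probability at least $1 - e^{-\Omega(n)}$, uniformly in the realisation of $F_n$. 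A union bound over $\ell = 1, \dots, k$ upgrades this to joint concentration at the selected times. I expect this concentration step to be the main technical point, though the uniformity in $F_n$ is harmless because the bound depends on $g^{[F_n]}$ only through the trivial fact that it takes values in $[0,1]$.

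Combining (a) and (b) via the triangle inequality gives $d_\square(h^{G_n(t_\ell)}, g^{[F]}(t_\ell;\cdot,\cdot)) \to 0$ in probability, simultaneously for $\ell = 1, \dots, k$. Since the pre-limits are defined jointly with their random limit on the same probability space, this convergence in probability implies convergence of the joint distribution, which is precisely the required fdd convergence.
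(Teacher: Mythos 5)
Your proposal is correct and follows essentially the same route as the paper: convergence of $g^{[F_n]}$ to $g^{[F]}$ via the continuous mapping theorem (the paper's Lemma \ref{lem:CM}), conditional cut-distance concentration of $h^{G_n(t)}$ around $g^{[F_n]}(t)$ using \eqref{Ob:spIRG} and the label alignment from Assumption \ref{ass:VL}, and a converging-together conclusion. The only differences are cosmetic: the paper cites the uniform bound in \cite[Lemma 5.11]{C17} where you sketch the McDiarmid/union-bound estimate, and it invokes \cite[Corollary 3.3]{EK09} where you go through a Skorokhod representation.
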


Strengthening the topology to obtain convergence in distribution on $D((\mathscr{W},d_\square), [0,T])$ is more difficult than in Section \ref{sec:GFvc}, because, unlike the space $(\tilde{\mathscr{W}},\delta_\square)$, the space $(\mathscr{W},d_\square)$ is not Polish, and hence we cannot directly apply established sufficient conditions for tightness, such as those stated in \cite[Sections 3.6--3.9]{EK09}. Nonetheless, we are able to establish convergence directly by using \cite[Corollary 3.3]{EK09}.

\begin{assumption}
\label{ass:FLT}
For any $\varepsilon >0$,
\begin{equation}
\lim_{\delta \downarrow 0} \limsup_{n \to \infty} \sup_{t \in [0,T]} \frac{T}{\delta}\, 
\mathbb{P}\left(C_n(t, \delta) > \varepsilon {n \choose 2}\right)=0.
\end{equation}
\end{assumption}

\begin{theorem}
\label{thm:CD}
Subject to Assumptions \ref{ass:CP}, \ref{ass:CD}--\ref{ass:VL}, \ref{ass:FLT}, $h^{G_n} \Rightarrow g^{[F]}$ as $n\to\infty$ in $D((\mathscr{W},d_\square), [0,T])$.
\end{theorem}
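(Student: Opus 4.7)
The plan is to verify the two hypotheses of \cite[Corollary 3.3]{EK09}, which provides convergence in $D(S,[0,T])$ for a possibly non-Polish metric space $S$ via (a) convergence of finite-dimensional distributions along a dense set of times and (b) a uniform modulus-of-continuity estimate for the paths in the metric of $S$. Taking $S=(\mathscr{W},d_\square)$, part (a) is already supplied by Proposition \ref{prop:fdd}, so the bulk of the work reduces to establishing the modulus bound in (b), for which the key input is Assumption \ref{ass:FLT}.

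The bridge from the edge-counting functional $C_n(t,\delta)$ to the cut metric $d_\square$ is the following elementary observation: if the number of unordered pairs $ij$ for which $E^{(n)}_{ij}(u)\neq E^{(n)}_{ij}(v)$ is at most $k$, then $h^{G_n(u)}-h^{G_n(v)}$ is supported on a symmetric subset of $[0,1]^2$ of area at most $2k/n^2$ and is bounded pointwise by $1$, so from \eqref{cutdist},
\begin{equation*}
d_\square\bigl(h^{G_n(u)},h^{G_n(v)}\bigr) \,\leq\, \frac{2k}{n^2}.
\end{equation*}
Applied to arbitrary $u,v\in[t,t+\delta]$, and using that any edge in which the two configurations disagree must have changed somewhere in $[t,t+\delta]$, this gives
\begin{equation*}
\sup_{u,v\in[t,t+\delta]} d_\square\bigl(h^{G_n(u)},h^{G_n(v)}\bigr) \,\leq\, \frac{2\,C_n(t,\delta)}{n^2} \,\leq\, \frac{4\,C_n(t,\delta)}{\binom{n}{2}}.
\end{equation*}
Covering $[0,T]$ by $\lceil T/\delta\rceil$ intervals of length $\delta$ and observing that any pair of times with $|u-v|\leq\delta$ lies in the union of two consecutive such intervals, a union bound yields, for every $\varepsilon>0$,
\begin{equation*}
\mathbb{P}\Bigl(\sup_{|u-v|\leq\delta}d_\square\bigl(h^{G_n(u)},h^{G_n(v)}\bigr)>\varepsilon\Bigr)
\,\leq\, \frac{T}{\delta}\,\sup_{t\in[0,T]}\mathbb{P}\Bigl(C_n(t,2\delta)>\tfrac{\varepsilon}{8}\binom{n}{2}\Bigr),
\end{equation*}
and Assumption \ref{ass:FLT} makes the right-hand side vanish in the regime $n\to\infty$ followed by $\delta\downarrow 0$. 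This is the modulus estimate required in (b), and in fact shows that any subsequential limit has paths that are uniformly continuous in $d_\square$.

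The main obstacle is that $(\mathscr{W},d_\square)$ is not Polish, so the standard Billingsley/Aldous tightness route is unavailable; one must appeal to the specialized criterion \cite[Corollary 3.3]{EK09}, and verify its hypotheses directly rather than through Prokhorov-type compactness. Assumption \ref{ass:FLT} is calibrated precisely so that the naive union bound above is effective in this non-Polish setting: note the extra $T/\delta$ prefactor, which is genuinely stronger than the logarithmic tail employed in Proposition \ref{lem:TC} for the LDP. Remaining technical items are that $g^{[F]}$ defines a bona fide random element of $D((\mathscr{W},d_\square),[0,T])$, which follows by applying the continuous mapping theorem (Assumption \ref{ass:CP}) to the limit $F$ supplied by Assumption \ref{ass:CD}, and that the dynamic vertex labelling of Assumption \ref{ass:VL} is invoked at the finite-dimensional level via Proposition \ref{prop:fdd}, as \emph{without} it the finite-dimensional distributions would fail to converge at all in $(\mathscr{W},d_\square)$ (cf.\ Figure \ref{MComp}).
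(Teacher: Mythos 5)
There is a genuine gap at the concluding step. You describe \cite[Corollary 3.3]{EK09} as a criterion that upgrades (a) finite-dimensional convergence plus (b) a modulus-of-continuity estimate to weak convergence in $D(S,[0,T])$ for non-Polish $S$. That is not what that corollary says: it is the ``converging-together'' lemma (if $X_n\Rightarrow X$ and $d(X_n,Y_n)\to 0$ in probability, then $Y_n\Rightarrow X$), and no fdd-plus-modulus criterion is available off the shelf here --- this is precisely the obstacle the paper points out when it rules out the tightness machinery of \cite[Sections 3.6--3.9]{EK09} because $(\mathscr{W},d_\square)$ is not Polish. Your (a) and (b) alone do not produce a limit law on the path space: without Prokhorov-type compactness (which would in any case also require a compact containment condition in $(\mathscr{W},d_\square)$, problematic for empirical graphons) a modulus bound on the prelimit paths together with fdd convergence does not by itself identify a weak limit in $D((\mathscr{W},d_\square),[0,T])$. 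Relatedly, your remark about ``subsequential limits'' presupposes exactly the relative compactness that is unavailable.

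What the paper actually does, and what is missing from your argument, is a path-level comparison with a process that is already known to converge: by Assumptions \ref{ass:CP} and \ref{ass:CD} and the continuous mapping theorem, $g^{[F_n]}\Rightarrow g^{[F]}$ in $D((\mathscr{W},d_\square),[0,T])$ (Lemma \ref{lem:CM}); then one shows $d(h^{G_n},g^{[F_n]})\to 0$ in probability for a metric $d$ on the path space, by discretising time with mesh $\delta$, using the concentration estimate behind Proposition \ref{prop:fdd} (via \eqref{Ob:spIRG} and the uniform bound of \cite[Lemma 5.11]{C17}) to make $h^{G_n}(\delta i)$ close to $g^{[F_n]}(\delta i)$ at every grid point, and Assumption \ref{ass:FLT} to control changes within each interval; only then is \cite[Corollary 3.3]{EK09} applied, in its correct converging-together form. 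Your estimate $d_\square(h^{G_n(u)},h^{G_n(v)})\leq 2C_n(t,\delta)/n^2$ and the $T/\delta$ union bound are fine and correspond to the within-interval control, but you never establish closeness of $h^{G_n}$ to $g^{[F_n]}$ (or to any converging reference process) uniformly along the path, which is the crux. The proposal could be repaired either by following the paper's comparison with $g^{[F_n]}$, or by a piecewise-constant interpolation argument: show the grid interpolations of $h^{G_n}$ converge (via Proposition \ref{prop:fdd} and continuous mapping), control the interpolation error for $h^{G_n}$ by your modulus bound and for $g^{[F]}$ by its c\`adl\`ag regularity, and invoke a double-limit converging-together theorem; but as written, the final inference is unjustified.
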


In view of Lemma \ref{lem:LDPDP}, the conditions of Theorem \ref{thm:CD} can again be readily verified for the illustrative example. We establish a more general result in Proposition \ref{prop:CDLLN}.


\section{Applications and extensions}
\label{sec:ExSD}

In this section we consider a class of processes that generalise the illustrative example. We use this class of processes to make three points that we believe apply more generally:
\begin{itemize}
\item[(I)] 
An additional layer of dependence between the edges can be introduced that cannot be captured by the types of the vertices (so that \eqref{Ob:spIRG} no longer holds), but still allows to establish limiting results in the spirit of Section \ref{sec:GVP}. Roughly speaking, this is the case when the additional layer of dependence between the edges is of mean-field type (see Section \ref{Sec:APP1}).
\item[(II)] 
The specific edge-switching dynamics rarely affects the limiting path of the process (see Section \ref{sec:ExDS1}).
\item[(III)] 
The dependence between edges in inhomogeneous random graphs with type dependence leads to new behaviour in the corresponding variational problems, even in relatively simple settings (see Section \ref{sec:ExDS3}).
\end{itemize}


\subsection{Beyond conditional independence of edges}
\label{Sec:APP1}


\subsubsection{Model and LDP}

Suppose that $(G_n(t))_{t \in [0, T]}$ is characterised by the following dynamics:
\begin{itemize}
\item 
$G_n(0)$ is the empty graph.
\item
Each vertex is assigned an independent rate-$\gamma$ Poisson clock, and each time the clock associated with vertex $v$ rings all the edges that are adjacent to $v$ become inactive.
Let $\{\tau_k(v)\}_{k \in \mathbb{N}}$ denote the sequence of times at which the Poisson clock attached to vertex $v$ rings, and let, as before,
\begin{equation}
X_v(t) := t - \max_k \{ \tau_k (v)\colon\, \tau_k(v) \leq t \} \vee 0
\end{equation}  
denote the time since the clock last rung (i.e., the `age' of vertex $v$). To ensure that $X_v(t) \in [0,1]$ we take $T=1$. Note that the definition $X_v(t)$ here differs slightly from that in the illustrative example (Section \ref{sec:example}).
\item 
If edge $ij$ is inactive, then it becomes active at rate $\lambda(t,X_i(t),X_j(t),F_n(t; \cdot), \tilde h^{G_n(t)})$.
\item
If edge $ij$ is active, then it becomes inactive at rate $\mu(t,X_i(t),X_j(t),F_n(t; \cdot), \tilde h^{{G_n(t)}})$. 
\end{itemize}
We assume that $\lambda$ and $\mu$ are Lipshitz-continuous functions on $[0,T] \times [0,1]^2 \times \mathcal{M}([0,1]) \times \tilde{\mathscr{W}}$.

If $\lambda(\cdot)$ and $\mu(\cdot)$ do not depend on the current state of the unlabelled graph $\tilde h^{{G_n(t)}}$, i.e.,  if
\begin{equation}
\label{eq:FF}
\lambda(t,u,v,F,\tilde h) \equiv \lambda(t,u,v,F), \qquad \mu(t,u,v,F,\tilde h) \equiv \mu(t,u,v,F),
\end{equation}
then the process fits into the framework of Section \ref{sec:GVP}, otherwise it does not. To understand why, we compute the probability that edge $ij$ is active under \eqref{eq:FF} at time $t$ given $X_i(t)=x_i$, $X_j(t)=x_j$ and $F_n=F$. We have
\begin{equation}
\begin{aligned}
\label{eq:HFF}
H(t; x_i, x_j, F) &= \int_{t - x_i \wedge x_j}^t {\rm d}s\, \lambda(s, x_i-t+s, x_j-t+s, F(s; \cdot)) \\
&\qquad\times \exp \bigg\{ - \int_s^t {\rm d}a\, [\mu(a, x_i-t+a, x_j-t+a, F(a; \cdot)) \\
&\qquad \qquad +  \lambda(a, x_i-t+a, x_j-t+a, F(a; \cdot))]  \bigg\};
\end{aligned}
\end{equation}
an explanation of the expression on the right-hand side is given below, when we discuss a related differential equation version. 
It is also easy to see that two edges $ij$ and $k\ell$ are independent given $t$, $X_i(t)$, $X_j(t)$, $X_k(t)$, $X_\ell(t)$ and $F_n=F$, and hence the process indeed falls into the framework of Section \ref{sec:GVP}. To recover the illustrative example, take $\lambda(t,u,v,F)=\lambda \in \mathbb{R}_+$ and $\mu(t,u,v,F)=0$. Note that through \eqref{eq:Gtdef} and \eqref{eq:HFF}, we see that the empirical reference graphon $g^{[F]}(\cdot)$ is characterised by 
\begin{equation}
\begin{aligned}
\label{eq:HFF2}
g^{[F]}(t; x, F) &= H(t; \bar{F}(t;x), \bar{F}(t;y), F).
\end{aligned}
\end{equation}

An example of a choice for $\lambda(\cdot)$ that \emph{does} depend on the current state of the unlabelled graph $\tilde h^{{G_n(t)}}$ is 
\begin{equation}
\label{eq:lex}
\lambda(t,u,v,F,\tilde h) = 1+ s(G,\tilde h),
\end{equation}
where $G$ is a simple graph (e.g.\ a triangle) and $s(G,\tilde h)$ denotes the homomorphism density of $G$ in $\tilde h$. Note that, by the counting lemma (see, for instance, \cite[Proposition 2.2]{C17}), this particular choice of $\lambda(\cdot)$ is Lipshitz-continuous. In this case, the two edges $ij$ and $k\ell$ are \emph{not} independent given $t$, $X_i(t)$, $X_j(t)$, $X_k(t)$, $X_\ell(t)$ and $F_n=F$.  Indeed, if edge $ij$ is active, then it may participate in additional copies of $G$, which means that edge $k\ell$ is more likely to be active. This dependence is inherent to the model, in that it cannot be removed by changing the definition of the types $X_i(t)$.

We continue by giving some background on the next main result, Theorem \ref{thm:CDalt}, which demonstrates that, despite the above observation, we can still establish a sample-path LDP for the process. In order to express the rate function of this LDP, we need to define a mapping $F \mapsto g^{(F)}$ such that $g^{(F)}(t;x,y)$ can be interpreted as the (approximate) probability that there is an edge between vertices $\lceil n x \rceil$ and $\lceil n y \rceil$ at time $t$. We proceed in a similar manner as above. Given $X_i(t)=x_i$, $X_j(t)=x_j$ and $F_n=F$, the probability that there is an edge between vertices $i$ and $j$ is given by 
\begin{equation}
\label{eq:HD1}
\begin{aligned}
H(t; x_i, x_j, F) &= \int_{t - x_i \wedge x_j}^t {\rm d}s\, \lambda(s, x_i-t+s, x_j-t+s, F(s; \cdot), \tilde h^{G(s)}) \\
&\qquad \times \exp \bigg\{ - \int_s^t {\rm d}a\, [\mu(a, x_i-t+a, x_j-t+a, F(a; \cdot), \tilde h^{G(a)}) \\
&\qquad \qquad+  \lambda(a, x_i-t+a, x_j-t+a, F(a; \cdot), \tilde h^{G(a)})]  \bigg\}.
\end{aligned}
\end{equation}
Because this expression depends on $\tilde h^{G(\cdot)}$ (in addition to $F$ and $t$), it cannot be used to define a mapping $F \mapsto g^{(F)}$ through \eqref{eq:Gtdef}. However, if we tacitly assume that $\tilde h^{G(\cdot)}$ is well approximated by $\tilde g^{(F)}(\cdot)$, then we can use \eqref{eq:HD1} with $\tilde h^{G(t)}$ replaced by $\tilde g^{(F)}$ in combination with \eqref{eq:Gtdef}, to implicitly define the mapping $F \mapsto g^{(F)}$ by
\begin{equation}
\label{eq:HD2}
\begin{aligned}
g^{(F)}(t;x,y)&=\int_{t - \bar F(t;x) \wedge \bar F(t;y)}^t {\rm d}s\, \lambda(s, \bar F(s;x)-t+s, \bar F(s;y)-t+s, F(s; \cdot), \tilde g^{(F)}(s)) \\
&\quad \times \exp \bigg\{ - \int_s^t {\rm d}a\, [\mu(a, \bar F(a;x)-t+a, \bar F(a;y)-t+a, F(a; \cdot), \tilde g^{(F)}(a)) \\
&\quad \quad  +  \lambda(a, \bar F(a;x)-t+a, \bar F(a;y)-t+a, F(a; \cdot), \tilde g^{(F)}(a))]  \bigg\}.
\end{aligned}
\end{equation}

The mapping in \eqref{eq:HD2} is well-defined for all the relevant paths of $F$ (being such that $F \in \bar{\mathcal{M}}_X$, where $\bar{\mathcal{M}}_X$ is defined below), which can be seen by re-expressing it as the solution to a differential equation. For any $i \in [n]$, $X_i(\cdot)$ is a random variable on 
\begin{equation}
D_X=\{s(\cdot): s(t)=t-\max\{\tau_i: \tau_i \leq t\} \text{ for some } 0 \leq \tau_1 < \dots < \tau_k \leq T\}
\end{equation}
and $F_n(\cdot)$ is a random variable on 
\begin{equation}
\mathcal{M}_X=\left\{F: F(t;x)=\frac{1}{k}\sum_{i=1}^k \mathbbm{1}\{s_i(t)\leq x\} \text{ for some } s_1, \dots, s_k \in D_X, \, k \in \mathbb{N} \right\}.
\end{equation}
Let $\widebar{\mathcal{M}}_X$ denote the closure of $\mathcal{M}_X$, and observe that $\widebar{\mathcal{M}}_X$ contains all possible paths of $F_n$ and their limits.
If $F \in \widebar{\mathcal{M}}_X$ and $u' = F(t; \bar F(t +{\rm d}t;u)-{\rm d}t)$ for $u \in [0,1]$, then the mapping in \eqref{eq:HD2} satisfies the differential equation: $g^{(F)}(0;\cdot,\cdot)=0$,
\begin{align}
\begin{split}
\label{eq:GFdef}
g^{(F)}(t+{\rm d}t; x,y) 
&= g^{(F)}(t; x', y') \\
&\qquad + {\rm d}t \, [1-g^{(F)}(t; x', y')] \, \lambda[t,\bar F(t;x'), \bar F(t;y'), F(t; \cdot), \tilde g^{(F)}(t; \cdot)] \\
&\qquad - {\rm d}t \, g^{(F)}(t; x', y') \, \mu[t,\bar F(t;x'), \bar F(t;y'), F(t; \cdot), \tilde g^{(F)}(t ; \cdot)],
\end{split}
\end{align}
if $\bar F(t + {\rm d}t; x) \wedge \bar F(t + {\rm d}t; y) \geq {\rm d}t$ and $0$ otherwise.

The differential equation in \eqref{eq:GFdef} can be understood as follows. Consider the process $(h^{G_n(t)})_{t \in [0,T]}$ under Assumption \ref{ass:VL}, so that the vertices are labelled in order of increasing age. Informally, given $F_n=F$, $g^{(F)}(t + {\rm d}t ;x,y)$ can be thought of as the probability that edge $(\lceil n x \rceil, \lceil n y \rceil)$ is active at time $t + {\rm d}t$. For $F \in \widebar{\mathcal{M}}_X$, under Assumption \ref{ass:VL}, at time $t$ these vertices had the labels $(\lceil n x' \rceil, \lceil n y' \rceil)$, respectively (with $x',y'$ given above \eqref{eq:GFdef}). The first term in the right-hand side of \eqref{eq:GFdef}, $g^{(F)}(t;x',y')$, is the probability that the edge was active at time $t$, the second term accounts for the event that the edge turned on during the time interval $[t, t+ {\rm d}t]$, while the third term accounts for the event that it turned off during the time interval $[t, t + {\rm d}t]$.

\begin{theorem}
\label{thm:CDalt}
The sequence of processes $\{ (\tilde h^{G_n(t)})_{t \geq 0}\}_{n \in \mathbb{N}}$ satisfies the LDP with rate $n$ and with rate function 
\begin{equation}
\label{eq:RFex}
J(\tilde h) = \inf_{F \in D(\mathcal{M}([0,1]),[0,t])\colon\, \tilde g^{(F)}=\tilde h} K(F),
\end{equation}
where $g^{(\cdot)}$ is defined by \eqref{eq:GFdef} and the function $K(\cdot)$ is given in Proposition \ref{lem:LDPDP}.
\end{theorem}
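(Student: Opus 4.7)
The strategy is to decouple the evolution into a driving process $F_n$ (which satisfies a sample-path LDP at rate $n$ by Lemma \ref{lem:LDPDP}) and an edge-switching process that, conditionally on $F_n$, concentrates around the mean-field limit $\tilde g^{(F_n)}$ at a rate super-exponential in $n$. The main new feature compared to Theorem \ref{thm:LDPmain} is the self-referential dependence of the edge rates $\lambda,\mu$ on the current graph $\tilde h^{G_n(t)}$; this is handled by a coupling/Gronwall argument.

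The first step is to show that the implicit definition \eqref{eq:GFdef} yields a well-defined and continuous map $F \mapsto g^{(F)}$ from $\widebar{\mathcal{M}}_X \subseteq D(\mathcal{M}([0,1]),[0,T])$ to $D((\mathscr{W},\lVert\cdot\rVert_{L_1}),[0,T])$. Existence and uniqueness follow by a Picard iteration in the sup norm on a short interval $[0,t_0]$, using the Lipschitz continuity of $\lambda,\mu$ together with the counting lemma (e.g.\ \cite[Proposition~2.2]{C17}) to obtain Lipschitz dependence on the $\tilde h$-argument; a contraction factor $C\,t_0 < 1$ is then standard, and the solution is concatenated to $[0,T]$. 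Continuity in $F$ follows from the same Gronwall estimate.

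The second and decisive step is the mean-field concentration: for every $F \in \widebar{\mathcal M}_X$ and $\varepsilon>0$,
\begin{equation}
\limsup_{n \to \infty} \frac{1}{n}\log \mathbb{P}\Big(\sup_{t \in [0,T]} d_\square\!\big(\tilde h^{G_n(t)},\tilde g^{(F)}(t)\big) > \varepsilon\,\Big|\, F_n = F\Big) = -\infty.
\end{equation}
I would couple $G_n$ with an auxiliary process $\widehat G_n$ having the same types but with rates frozen at $\lambda(\cdot,\tilde g^{(F)}(t))$ and $\mu(\cdot,\tilde g^{(F)}(t))$. The edges of $\widehat G_n$ are conditionally independent given $F_n=F$, so a martingale/McDiarmid argument gives concentration of $\tilde h^{\widehat G_n(t)}$ around $\tilde g^{(F)}(t)$ at scale ${n\choose 2}\gg n$. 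A Gronwall bootstrap then transfers this to $\tilde h^{G_n(t)}$: Lipschitz continuity of $\lambda,\mu$ in the $\tilde h$-variable bounds the expected $L_1$ divergence of the two couplings by $C\int_0^t \mathbb{E}[\lVert h^{G_n(s)}-h^{\widehat G_n(s)}\rVert_{L_1}]\,\ddd s$, closing the loop.

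The third step combines the pieces. Continuity of $F\mapsto \tilde g^{(F)}$ plus the LDP for $(F_n)$ from Lemma \ref{lem:LDPDP} yields, via the contraction principle, an LDP for $\tilde g^{(F_n)}$ at rate $n$ with rate function \eqref{eq:RFex}. The concentration of step~2 shows that $\tilde h^{G_n}$ and $\tilde g^{(F_n)}$ are exponentially equivalent at rate $n$, so they share the same LDP in the pointwise topology. Exponential tightness in $D((\tilde{\mathscr W},\delta_\square),[0,T])$ is then verified via Proposition \ref{lem:TC}: the total edge-flip rate is bounded by $(\lVert\lambda\rVert_\infty+\lVert\mu\rVert_\infty){n\choose 2}$, so $C_n(t,\delta)$ is stochastically dominated by a Poisson variable with parameter $O(\delta\,n^2)$, and the Chernoff bound yields \eqref{eq:Tcon} with margin $\ell(n)=n$.

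\textbf{Main obstacle.} The crux is the second step: the rates at time $t$ depend on $\tilde h^{G_n(t)}$ itself, so the standard conditional-independence argument of Section~\ref{sec:GVP} does not apply directly. The Gronwall-coupling bootstrap above must be executed uniformly on $[0,T]$ and at large-deviation scale (rate $n$), not merely in probability; this requires a careful super-exponential concentration inequality for the martingale part of $\tilde h^{\widehat G_n}-\tilde g^{(F)}$ in the cut norm, combined with control of the coupling error under Lipschitz perturbation of the rates. Once this estimate is in place the remaining pieces are parallel to the proof of Theorem \ref{thm:LDPmain}.
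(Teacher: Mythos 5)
Your overall architecture coincides with the paper's: replace the self-referential rates $\lambda(\cdot,\tilde h^{G_n(t)})$, $\mu(\cdot,\tilde h^{G_n(t)})$ by rates evaluated at the deterministic (given the types) graphon $\tilde g^{(F_n)}(t)$, obtaining an auxiliary process with conditionally independent edges that falls into the framework of Section \ref{sec:GVP}; prove well-posedness/continuity of $F\mapsto g^{(F)}$ by a Gronwall estimate (the paper does exactly this when verifying Assumption \ref{ass:CP} in Lemma \ref{lem:couple2}); get the LDP for the auxiliary process from Theorem \ref{thm:LDPmain}; and transfer it to $\tilde h^{G_n}$ by exponential equivalence. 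The gap is in the transfer step, which you yourself flag as the ``main obstacle'' but do not resolve. Your Gronwall bootstrap controls $\mathbb{E}\bigl[\lVert h^{G_n(s)}-h^{\widehat G_n(s)}\rVert_{L_1}\bigr]$, and an expectation (or in-probability) bound is useless here: exponential equivalence at rate $n$ requires
\begin{equation}
\limsup_{n\to\infty}\frac{1}{n}\log\mathbb{P}\Bigl(\lVert \tilde h^{G_n(t)}-\tilde h^{\widehat G_n(t)}\rVert_{L_1}>\eta \text{ for some }t\in[0,T]\Bigr)=-\infty,
\end{equation}
i.e.\ decay faster than every $\eee^{-cn}$, and no first-moment Gronwall iteration produces that. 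The paper's device, which is the actual content of Lemma \ref{lem:couple}, is pathwise rather than in expectation: under the coupling, the count $D_n(t)$ of edge discrepancies increases at rate at most $n^2 C_{\rm max}c\,[\,\delta_\square(\tilde g^{(F_n)}(t),\tilde h^{G^*_n(t)})+\lVert h^{G^*_n(t)}-h^{G_n(t)}\rVert_{L_1}]$, so on the event that the first (conditionally independent) discrepancy stays below a small $\beta$ — an event whose complement is super-exponentially unlikely by the Section \ref{sec:GVP} concentration, which is your McDiarmid step — $D_n(t)$ is stochastically dominated by a Yule-type branching process with immigration (immigration rate $\propto\beta n^2$, per-individual birth rate $2cC_{\rm max}$). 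Its total size has an explicit exponential moment, and a Chernoff bound gives $\mathbb{P}(Z_n^*(T)\geq\eta n^2/2)\leq\eee^{-n^2(I(\eta)+o(1))/2}$ with $I(\eta)>0$ once $\beta$ is chosen small enough that $\eta>C^*\beta T\eee^{C^*T}$. This stochastic-domination-plus-Chernoff argument is exactly the ``careful super-exponential concentration'' you say is needed; without it (or an equivalent exponential-Gronwall/martingale exponential inequality at scale $n^2$), the proof does not close.

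Two smaller points. First, your exponential-tightness argument dominates $C_n(t,\delta)$ by a Poisson variable built only from the switching rates $\lambda,\mu$; this misses the vertex-clock mechanism, in which a single ring deactivates up to $n-1$ edges at once. The paper's proof of Proposition \ref{prop:DED} splits $C_n$ into a vertex-driven part, dominated by $n\cdot{\rm Bin}(n,1-\eee^{-\gamma\delta})$, and an edge-driven part, dominated by ${\rm Bin}\bigl({n\choose 2},O(\delta)\bigr)$; both contribute super-exponentially small tails at rate $n$ after $\delta\downarrow 0$, so the conclusion survives, but your domination as stated is not correct. Second, your conditioning ``given $F_n=F$ for fixed $F\in\widebar{\mathcal M}_X$'' needs to be upgraded to a statement uniform over the (random) realisations of $F_n$, e.g.\ by the compactness/subsequence argument of Lemma \ref{lem:EF}; the paper sidesteps this by working with the unconditional event and the coupled processes directly.
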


\begin{proposition}
\label{prop:CDLLN}
Suppose Assumption \ref{ass:VL} holds then $(h^{G_n(t)})_{t \geq 0} \Rightarrow g^{(F^*)}$ with $F^*$ characterised by 
\begin{equation}
F^*(x;t) =\begin{cases}
1 - \eee^{-\gamma x}, \quad &\text{if } x <t,\\
1 & \text{otherwise},
\end{cases}
\end{equation}
and $g^{(\cdot)}$ defined by \eqref{eq:GFdef}.
\end{proposition}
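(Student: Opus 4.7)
The idea is to compare $(G_n(t))_{t\in[0,T]}$, whose rates depend self-consistently on the current graphon $\tilde h^{G_n(t)}$, with an auxiliary process $(\hat G_n(t))_{t\in[0,T]}$ in which the graphon-valued argument of $\lambda$ and $\mu$ is frozen at the candidate limit $\tilde g^{(F^*)}$, and then to close the argument via a Gronwall-type coupling bound. Assumption~\ref{ass:VL} is in force throughout.

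\emph{Step 1 (limit of the driving process).} The vertex ages $(X_v(t))_{t\in[0,T]}$, $v\in[n]$, are i.i.d.\ c\`adl\`ag processes driven by independent rate-$\gamma$ Poisson clocks, and a direct computation shows that each has marginal distribution $F^*(t;\cdot)$ at time $t$. By a standard LLN for empirical measures of i.i.d.\ c\`adl\`ag processes (cf.\ Appendix~\ref{appA}), $F_n \Rightarrow F^*$ in $D(\mathcal{M}([0,1]),[0,T])$. The Lipschitz continuity of $\lambda,\mu$ and a Picard iteration show that \eqref{eq:GFdef} has a unique solution $g^{(F^*)}$ at $F=F^*$.

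\emph{Step 2 (auxiliary process and its convergence).} On the same probability space, construct $(\hat G_n(t))_{t\in[0,T]}$ with the same vertex ages as $G_n$, but with edge rates $\hat\lambda_{ij}(t) = \lambda(t,X_i(t),X_j(t),F_n(t;\cdot),\tilde g^{(F^*)}(t))$ and $\hat\mu_{ij}(t)$ defined analogously. Given the age paths, the edges of $\hat G_n$ are conditionally independent, so $\hat G_n$ fits the framework of Section~\ref{sec:GVP} with induced reference graphon process given via \eqref{eq:HFF}--\eqref{eq:HFF2}. Assumption~\ref{ass:CP} follows from the Lipschitz continuity of $\lambda,\mu$ together with dominated convergence; Assumption~\ref{ass:CD} is Step~1; Assumption~\ref{ass:VL} is assumed; and Assumption~\ref{ass:FLT} holds because $C_n(t,\delta)$ is stochastically dominated by a sum of at most ${n \choose 2}$ Poisson random variables each with parameter $O(\delta)$, uniformly in $t$. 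Theorem~\ref{thm:CD} therefore yields $h^{\hat G_n} \Rightarrow g^{(F^*)}$ in $D((\mathscr{W},d_\square),[0,T])$.

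\emph{Step 3 (coupling and Gronwall).} Couple $G_n$ and $\hat G_n$ by assigning to each edge a shared Poisson alarm process of rate $\Lambda$, a common upper bound on $\lambda+\mu$, together with i.i.d.\ uniform $[0,1]$ marks, each alarm triggering a transition when the normalised current rate exceeds the mark. Let $D_n(t)$ be the number of edges that disagree between $G_n(t)$ and $\hat G_n(t)$. By Lipschitz continuity of $\lambda$ and $\mu$ in their last argument, new disagreements at a given edge accumulate at rate at most $L\,\delta_\square(\tilde h^{G_n(t)},\tilde g^{(F^*)}(t))$, while existing disagreements can only be resolved. Using the triangle inequality
\[
\delta_\square\bigl(\tilde h^{G_n(t)},\tilde g^{(F^*)}(t)\bigr) \leq \tfrac{D_n(t)}{{n \choose 2}} + \delta_\square\bigl(\tilde h^{\hat G_n(t)},\tilde g^{(F^*)}(t)\bigr)
\]
and denoting $\varepsilon_n(s):=\mathbb{E}[\delta_\square(\tilde h^{\hat G_n(s)},\tilde g^{(F^*)}(s))]$, which vanishes as $n\to\infty$ by Step~2, one obtains $\mathbb{E}[D_n(t)] \leq C\int_0^t (\mathbb{E}[D_n(s)] + n^2 \varepsilon_n(s))\,ds$. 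Gronwall's inequality then gives $\sup_{t\leq T}\mathbb{E}[D_n(t)]/n^2 \to 0$, whence $\sup_{t\leq T} d_\square(h^{G_n(t)},h^{\hat G_n(t)}) \to 0$ in probability, and combining with Step~2 yields $h^{G_n} \Rightarrow g^{(F^*)}$ in $D((\mathscr{W},d_\square),[0,T])$.

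\emph{Main obstacle.} The central difficulty is the self-consistent dependence of the edge rates on the random current graphon; the Lipschitz assumption on $\lambda$ and $\mu$ in the graphon coordinate is precisely what permits the Gronwall closure in Step~3. The frozen auxiliary process $\hat G_n$ is introduced so as to split the perturbation of the rates into a tractable coupling distance (controlled by $D_n$) plus a fluctuation term (controlled by Step~2); without this decomposition the argument would be circular, since the quantity we seek to bound appears inside the rates that drive the dynamics.
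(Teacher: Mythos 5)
Your proposal is correct in substance but takes a genuinely different route from the paper. The paper does not prove the law of large numbers directly: it couples $(G_n(t))_{t\ge 0}$ to a mimicking process $(G^*_n(t))_{t\ge 0}$ whose rates use the random, type-driven graphon $\tilde g^{(F_n)}(t;\cdot,\cdot)$ in place of $\tilde h^{G_n(t)}$ (Lemma \ref{lem:couple}), proves that the coupling error is superexponentially small in $n$ (because the same estimate must also serve the sample-path LDP of Theorem \ref{thm:CDalt}), verifies Assumption \ref{ass:CP} for the fixed-point map $F\mapsto g^{(F)}$ of \eqref{eq:GFdef} via a Gronwall argument (Lemma \ref{lem:couple2}), and then deduces Proposition \ref{prop:CDLLN} from Theorem \ref{thm:CD} together with the observation that $F^*$ is the unique zero of the rate function of Proposition \ref{lem:LDPDP}. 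You instead freeze the graphon coordinate at the deterministic candidate limit $g^{(F^*)}$, obtain $F_n\Rightarrow F^*$ by a direct LLN rather than through the LDP, and close with a Gronwall bound in expectation. What your route buys: the auxiliary process has an explicit \eqref{eq:HFF}-type reference graphon, so the continuity input for Theorem \ref{thm:CD} is simpler than the paper's Lemma \ref{lem:couple2}, and an in-probability coupling estimate suffices for weak convergence. What it gives up: your estimate cannot be recycled for the LDP, which is precisely why the paper works with $\tilde g^{(F_n)}$ and superexponential bounds. Two details you should patch: (i) your Gronwall inequality controls $\mathbb{E}[D_n(t)]$ pointwise, while the conclusion needs $\sup_{t\le T}D_n(t)$; bound $\sup_{t\le T}D_n(t)$ by the monotone count of discrepancy-creation events, whose expectation satisfies the same integral inequality; (ii) in verifying Assumption \ref{ass:FLT} you only account for the bounded edge-switching rates, but $C_n(t,\delta)$ also includes edges cleared by vertex clocks ringing in $[t,t+\delta]$; these are dominated by $n\cdot\mathrm{Bin}(n,1-\eee^{-\gamma\delta})$ and handled exactly as in the paper's proof of Proposition \ref{prop:DED}, so the conclusion stands.
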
 

\noindent
Theorem \ref{thm:CDalt} and Proposition \ref{prop:CDLLN} are interesting because they show that we can add a `mean-field type' interaction between the edges and still obtain equivalent results. 


\subsubsection{Numerical illustration}

\begin{figure}
\begin{center}
\includegraphics[width=4.96cm]{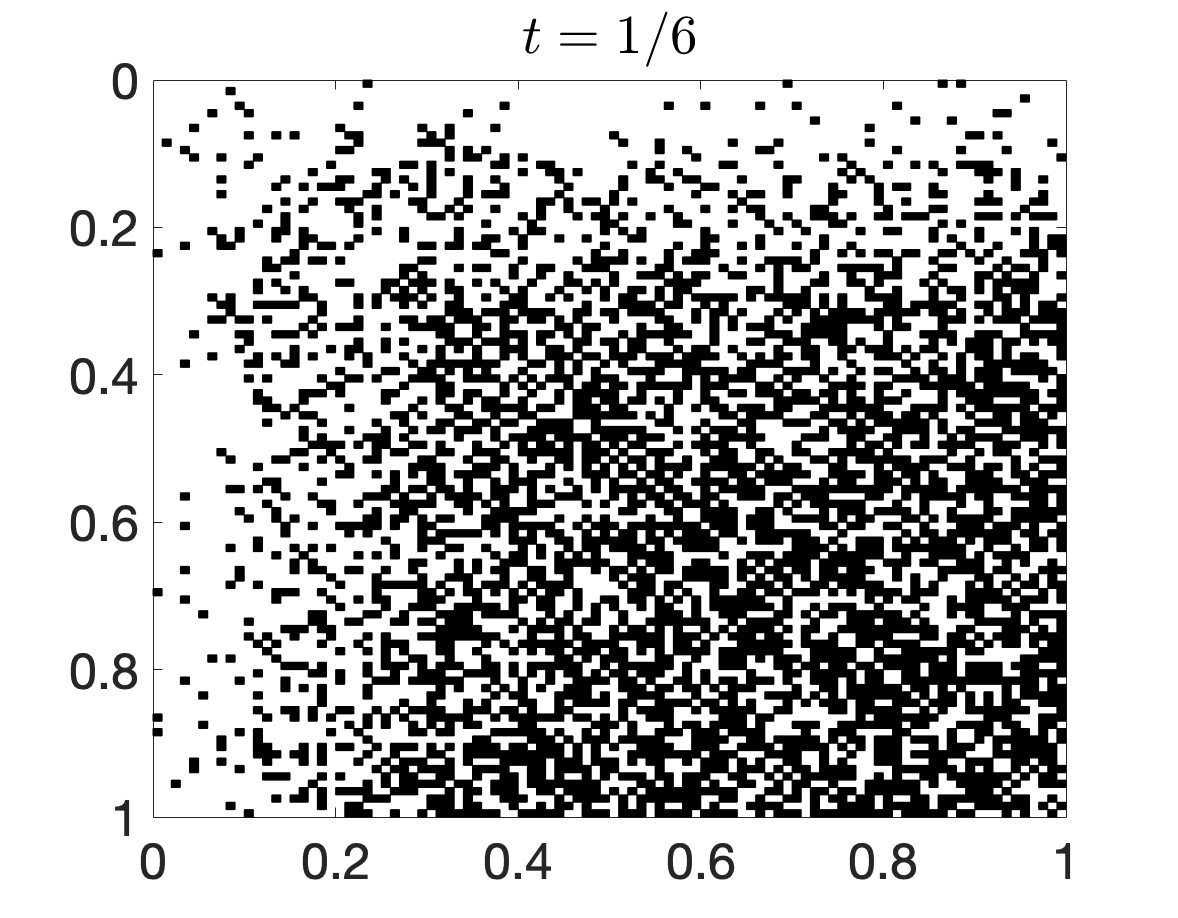}
\includegraphics[width=4.96cm]{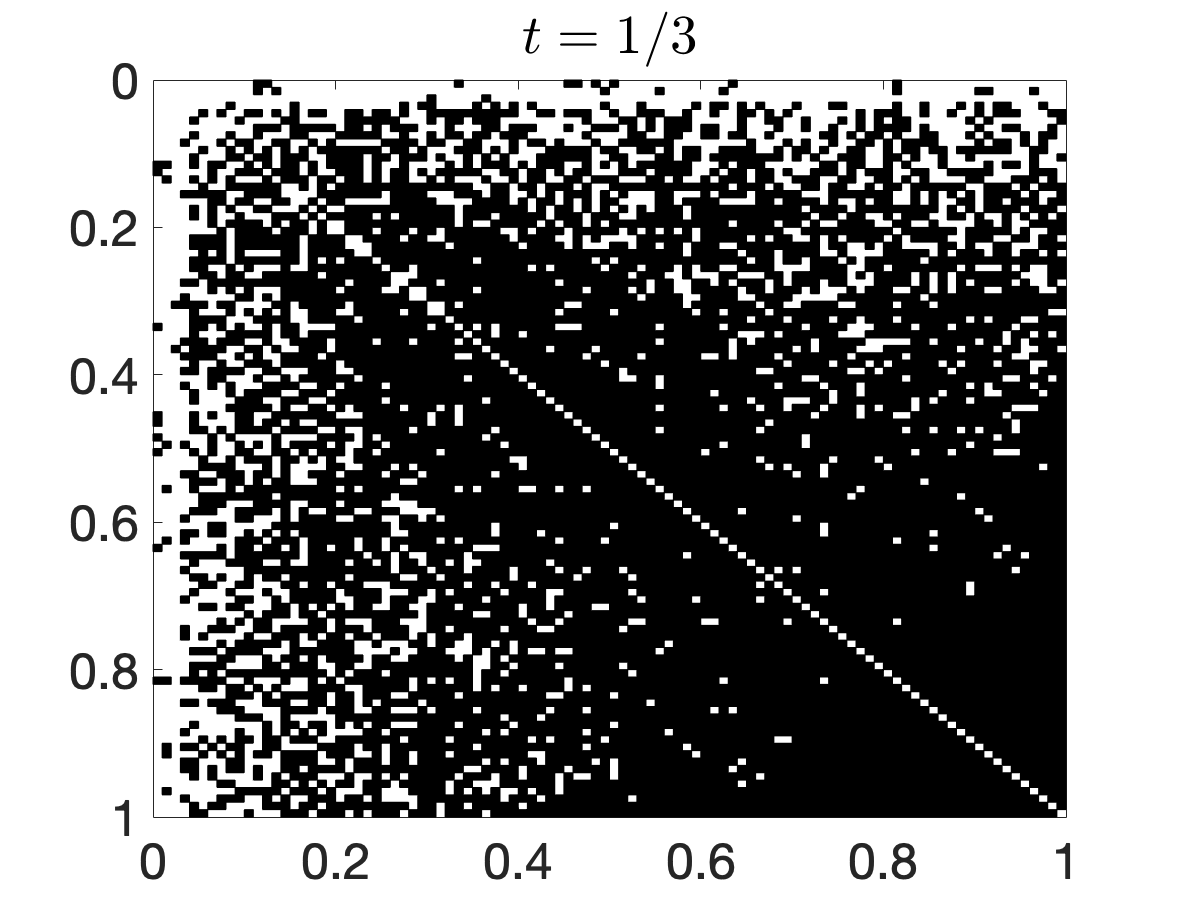}
\includegraphics[width=4.96cm]{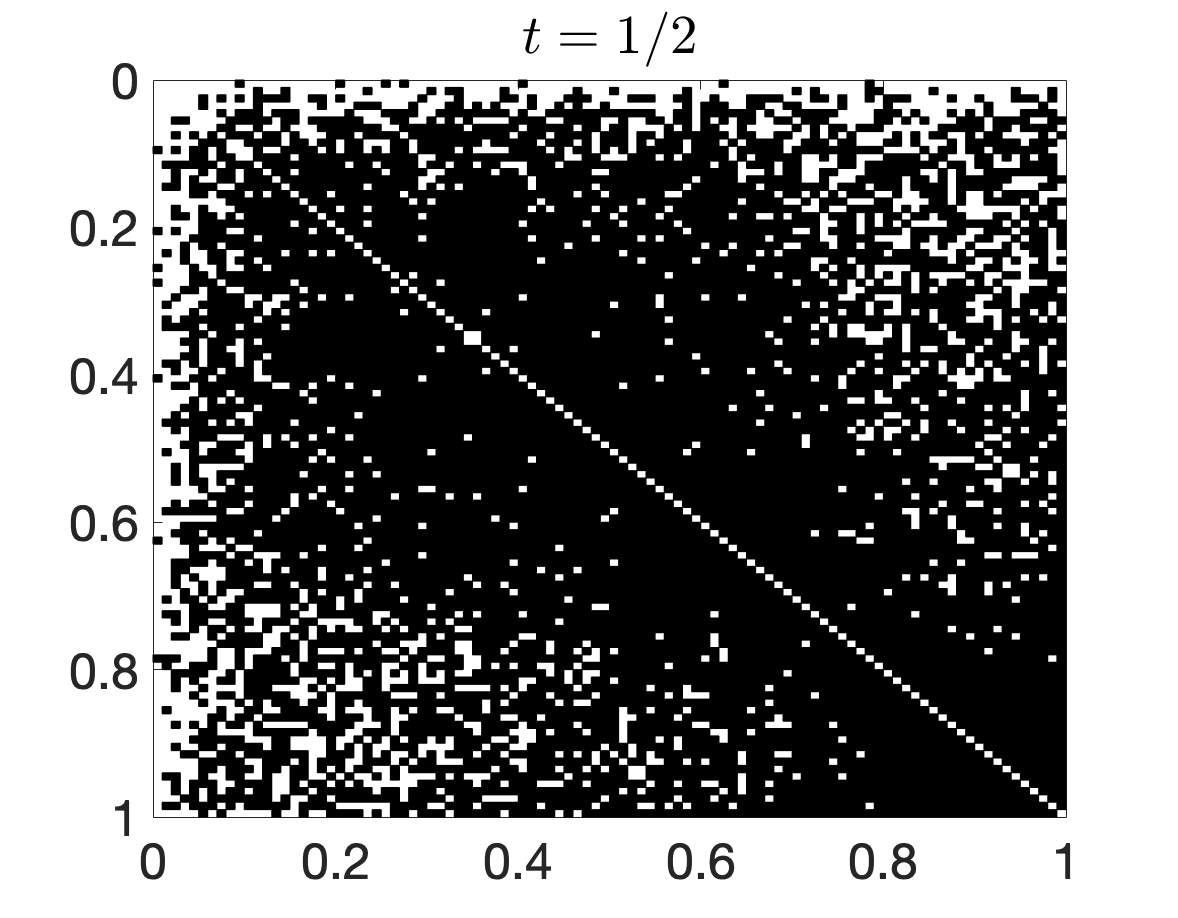}
\includegraphics[width=4.96cm]{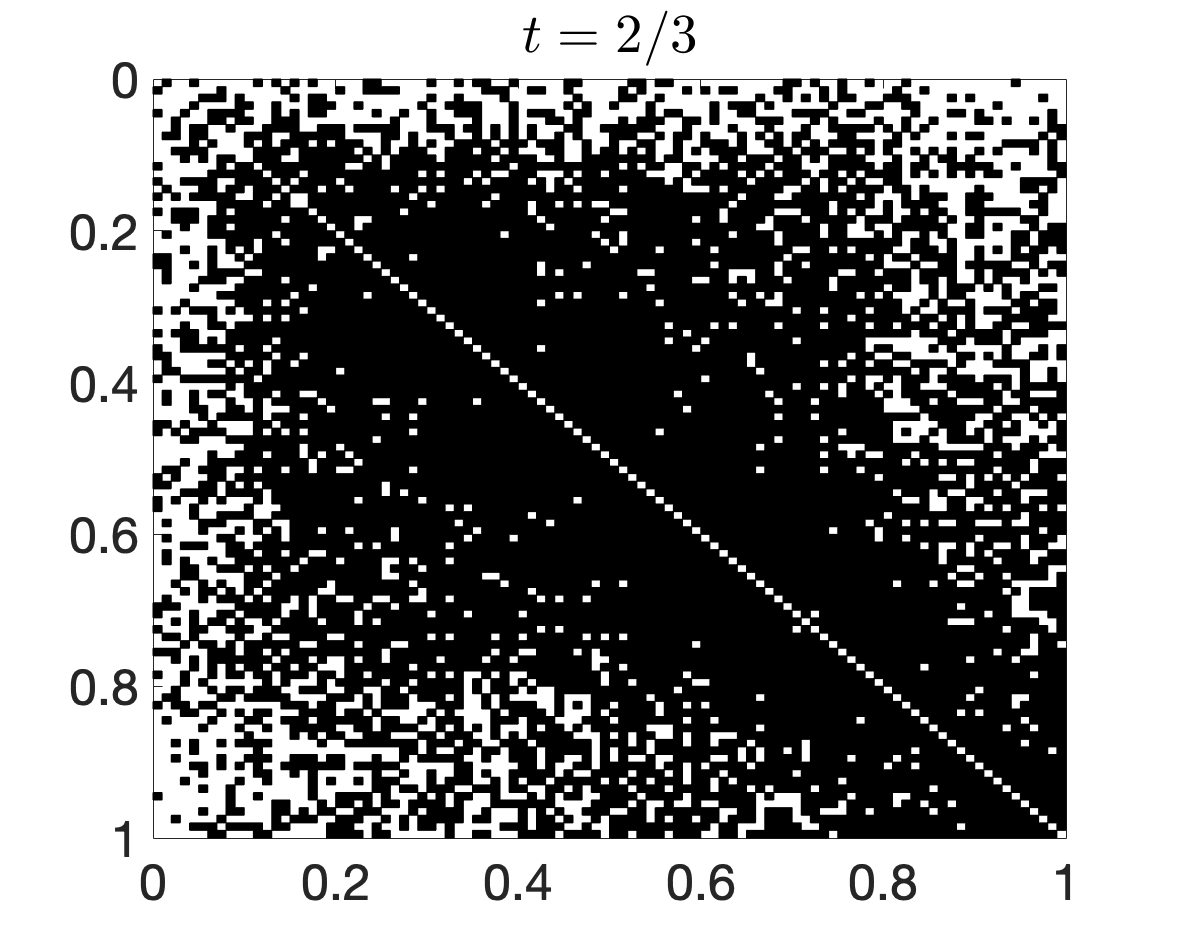}
\includegraphics[width=4.96cm]{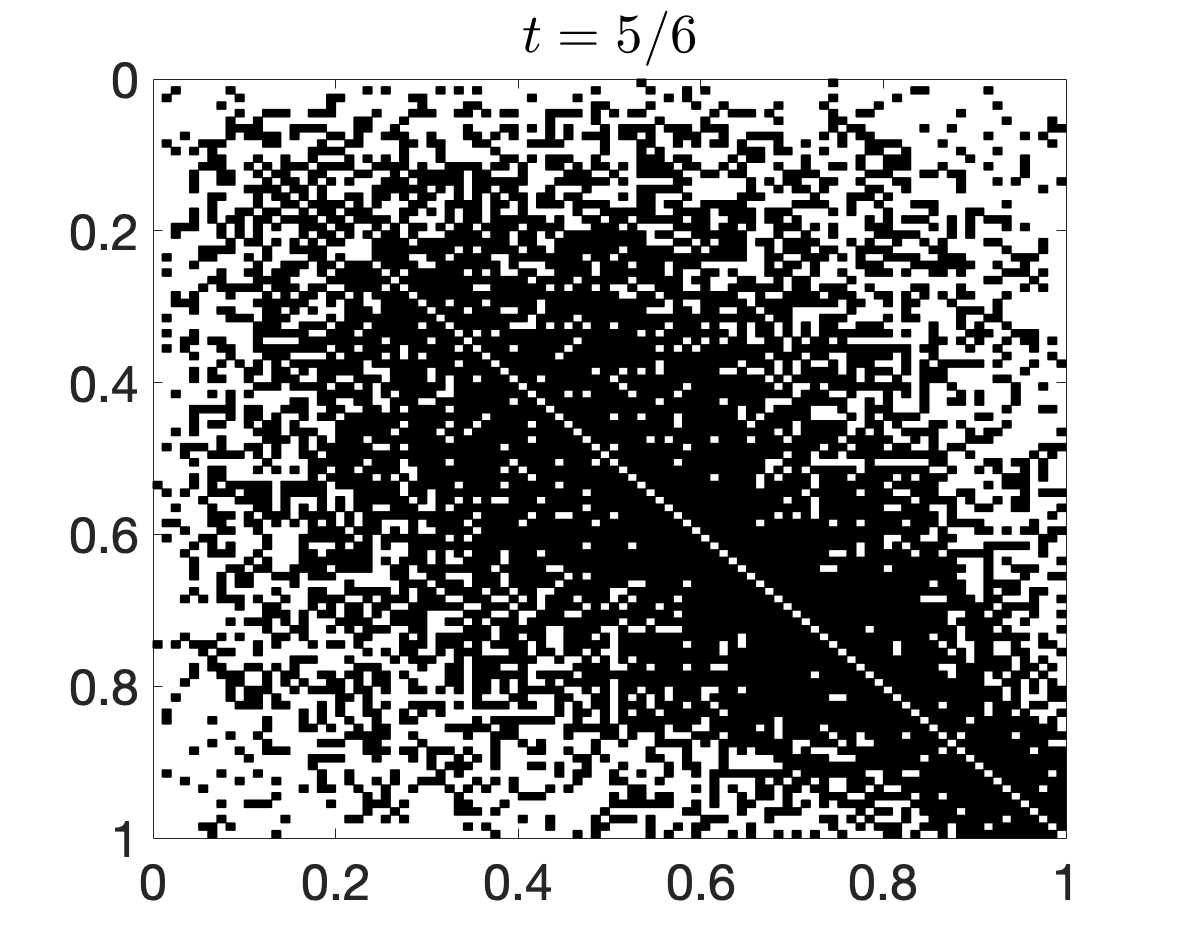}
\includegraphics[width=4.96cm]{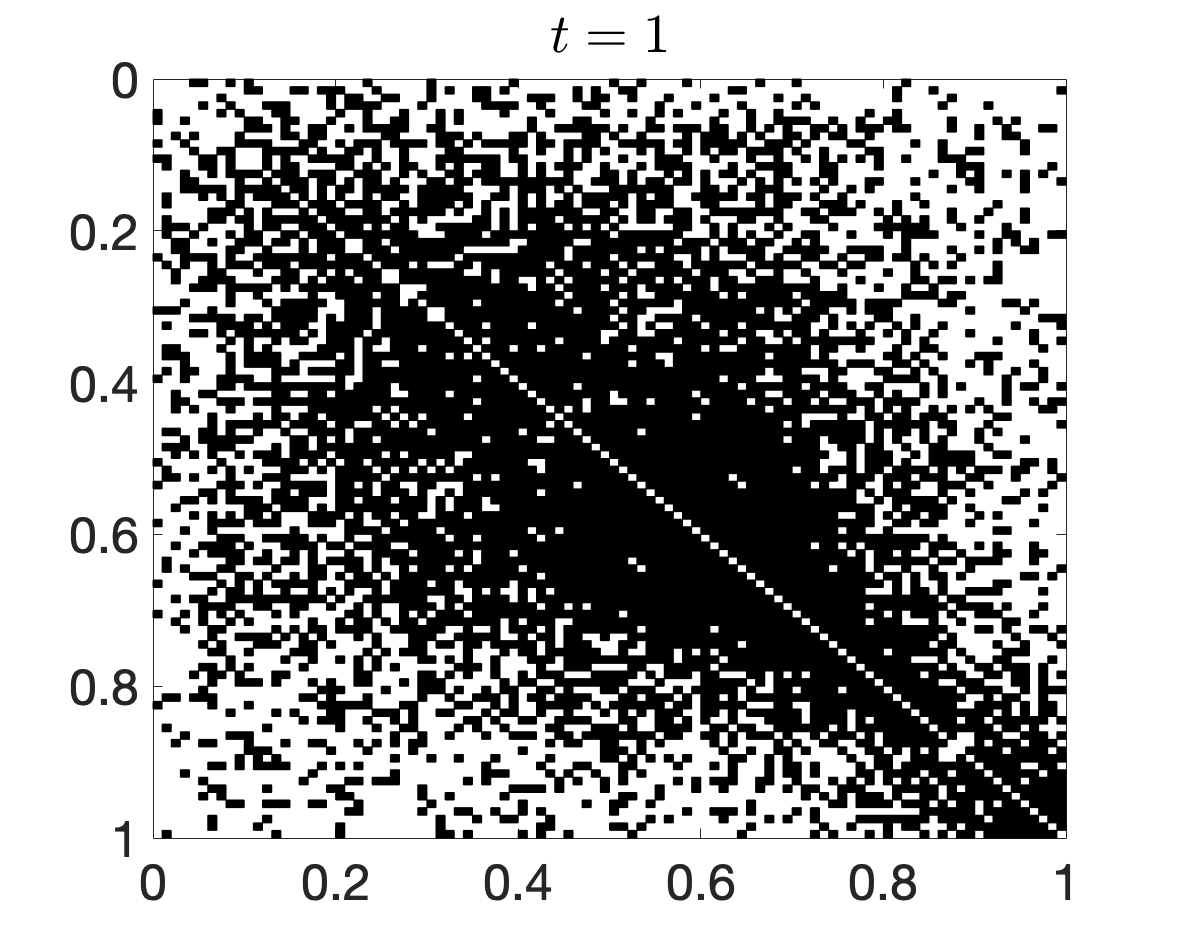}
\includegraphics[width=4.96cm]{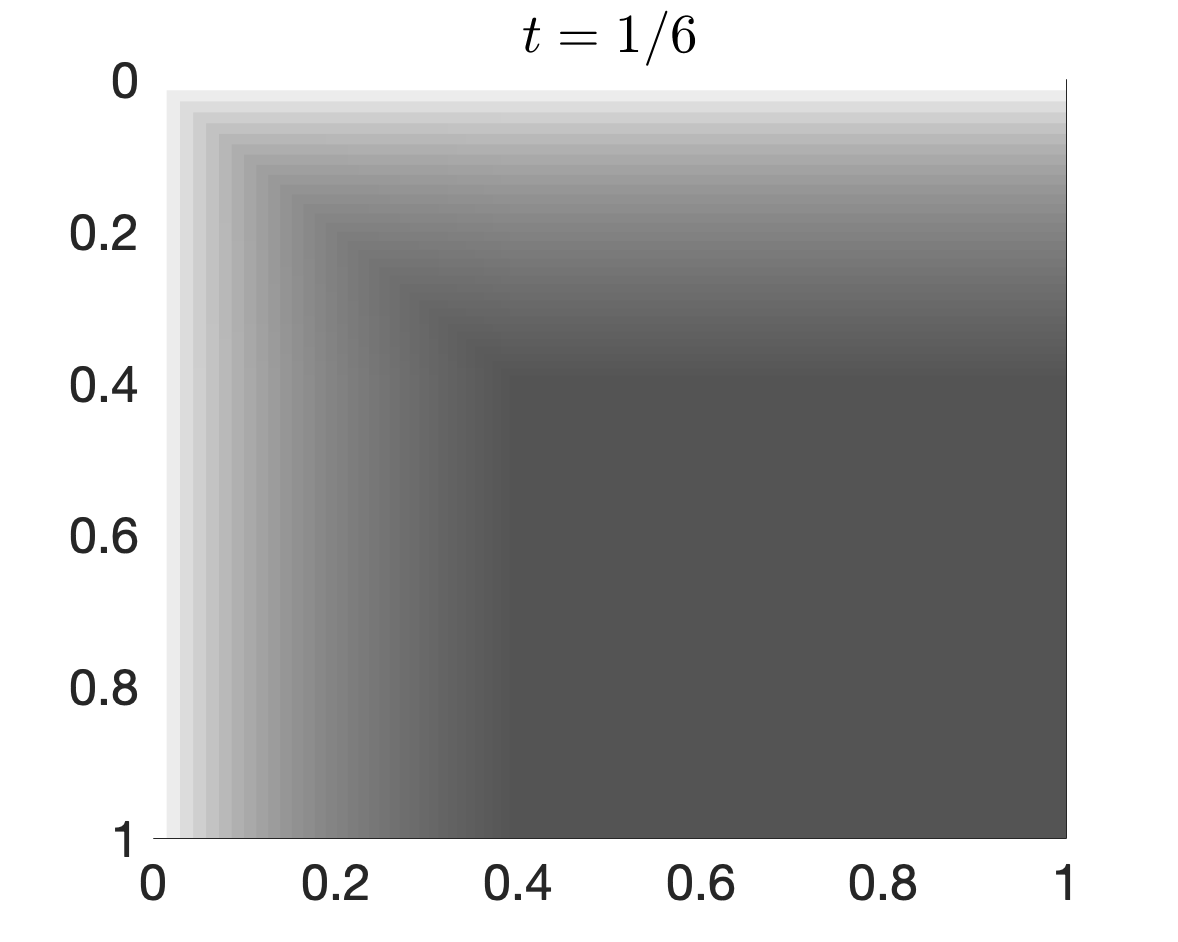}
\includegraphics[width=4.96cm]{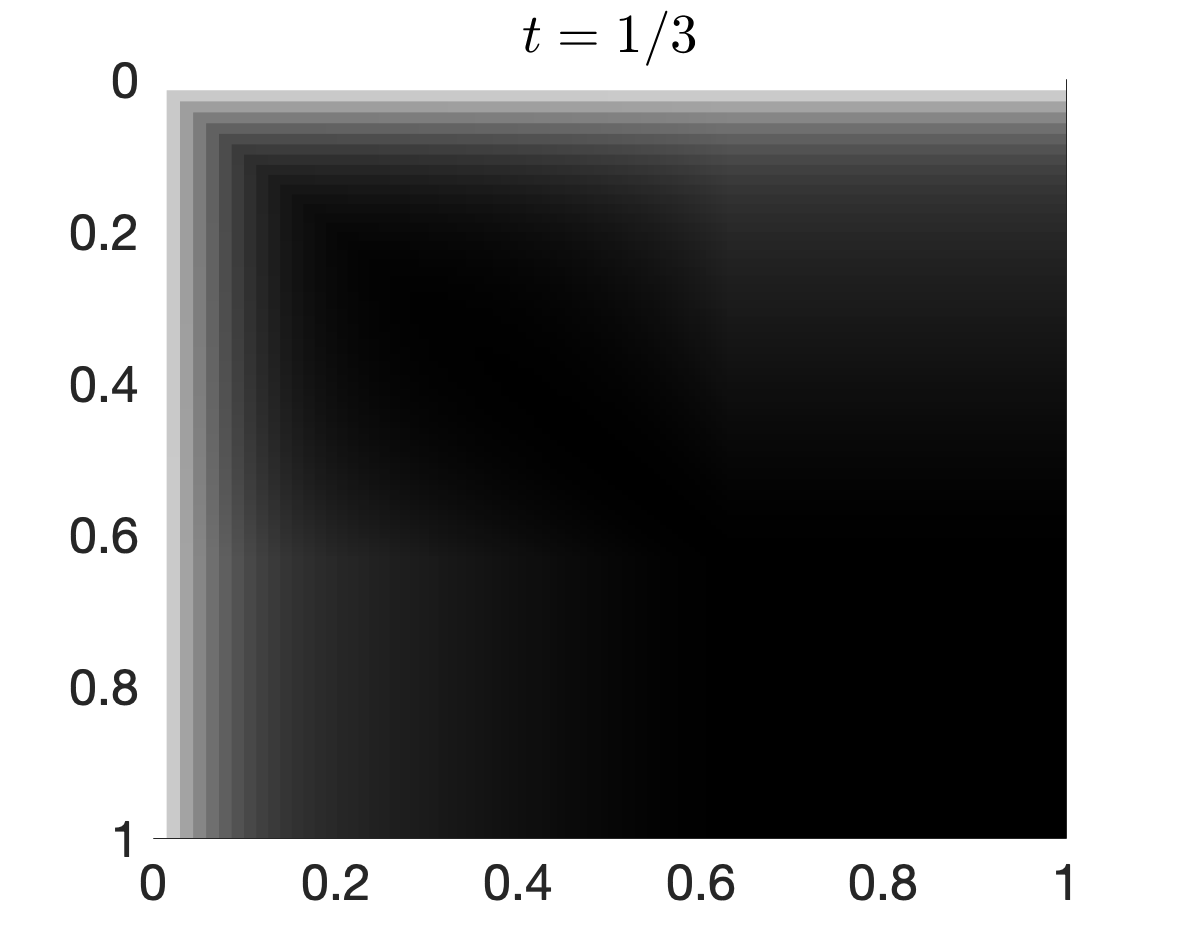}
\includegraphics[width=4.96cm]{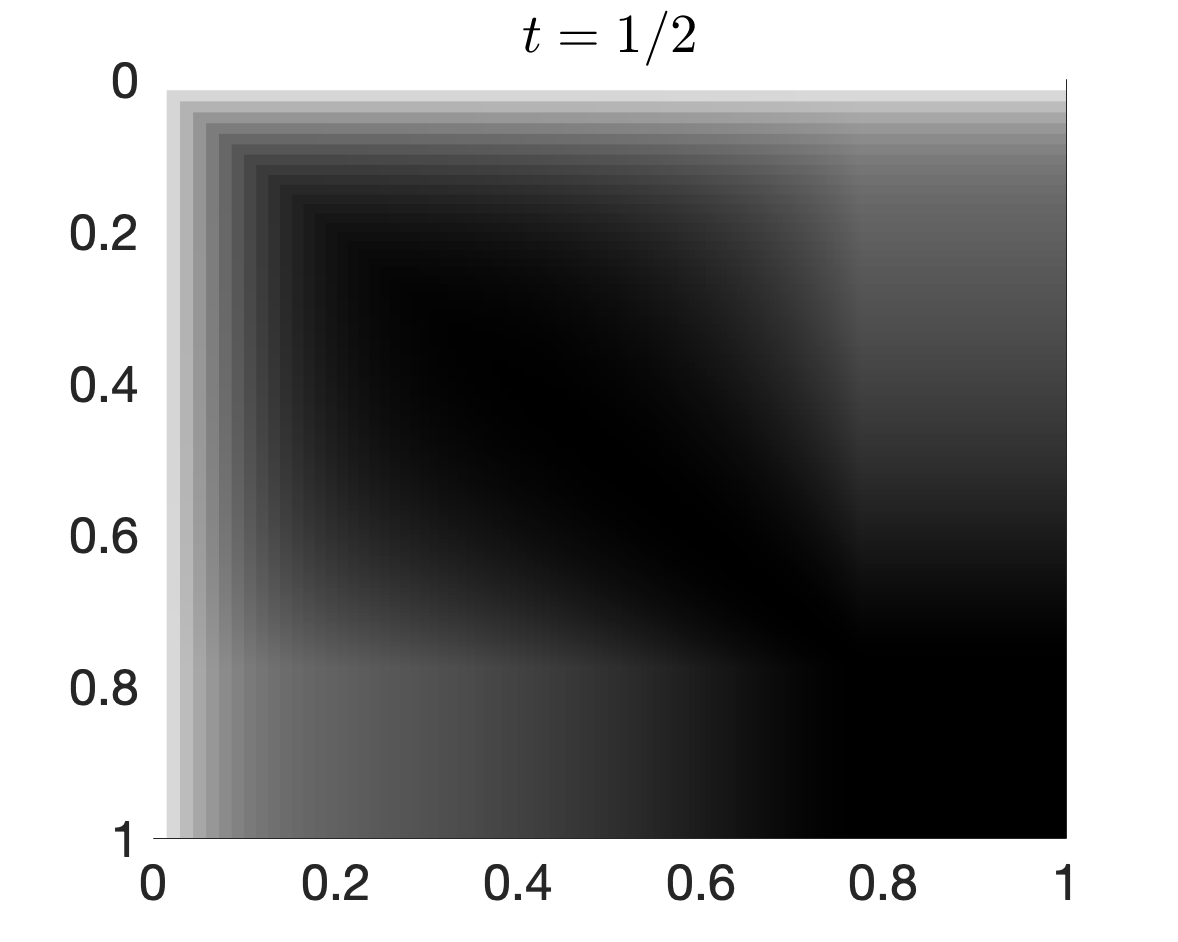}
\includegraphics[width=4.96cm]{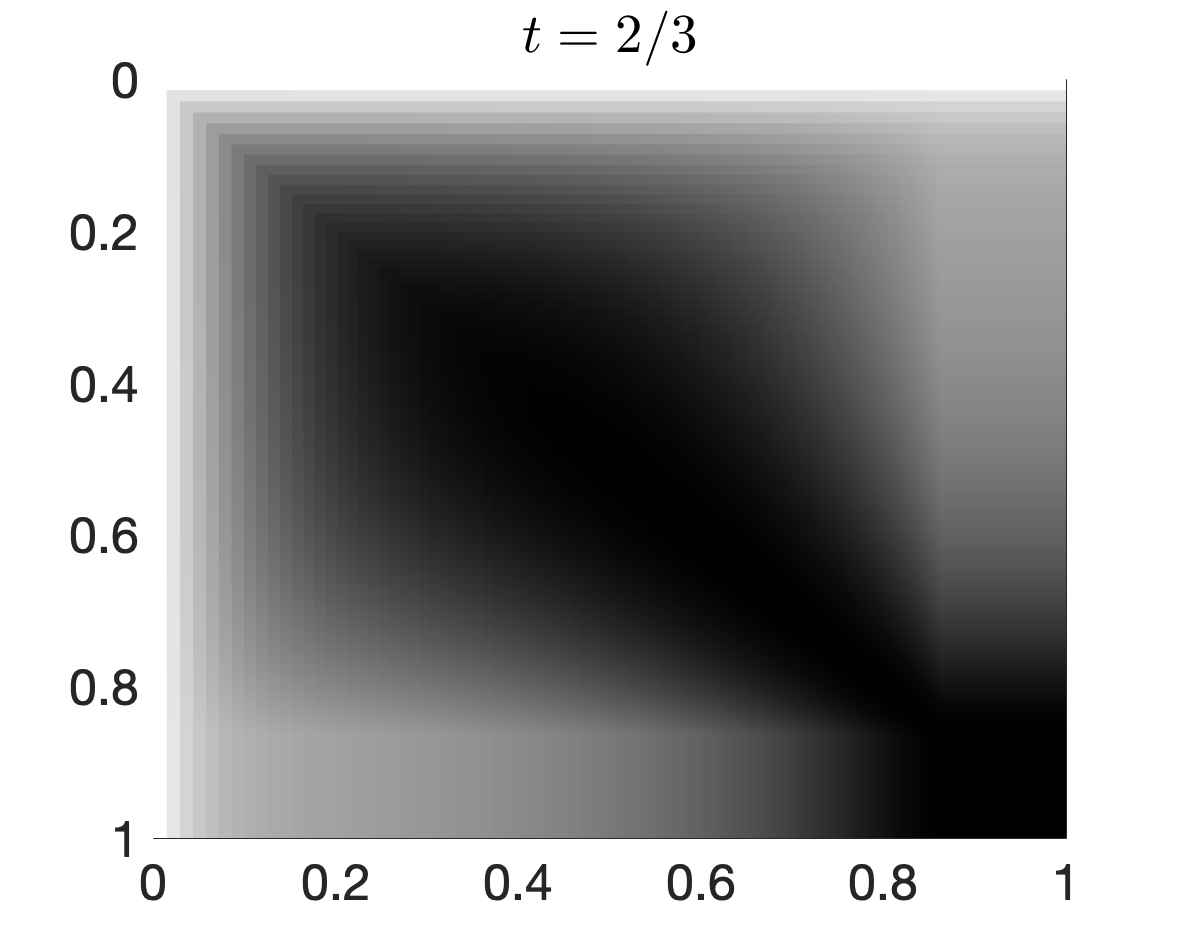}
\includegraphics[width=4.96cm]{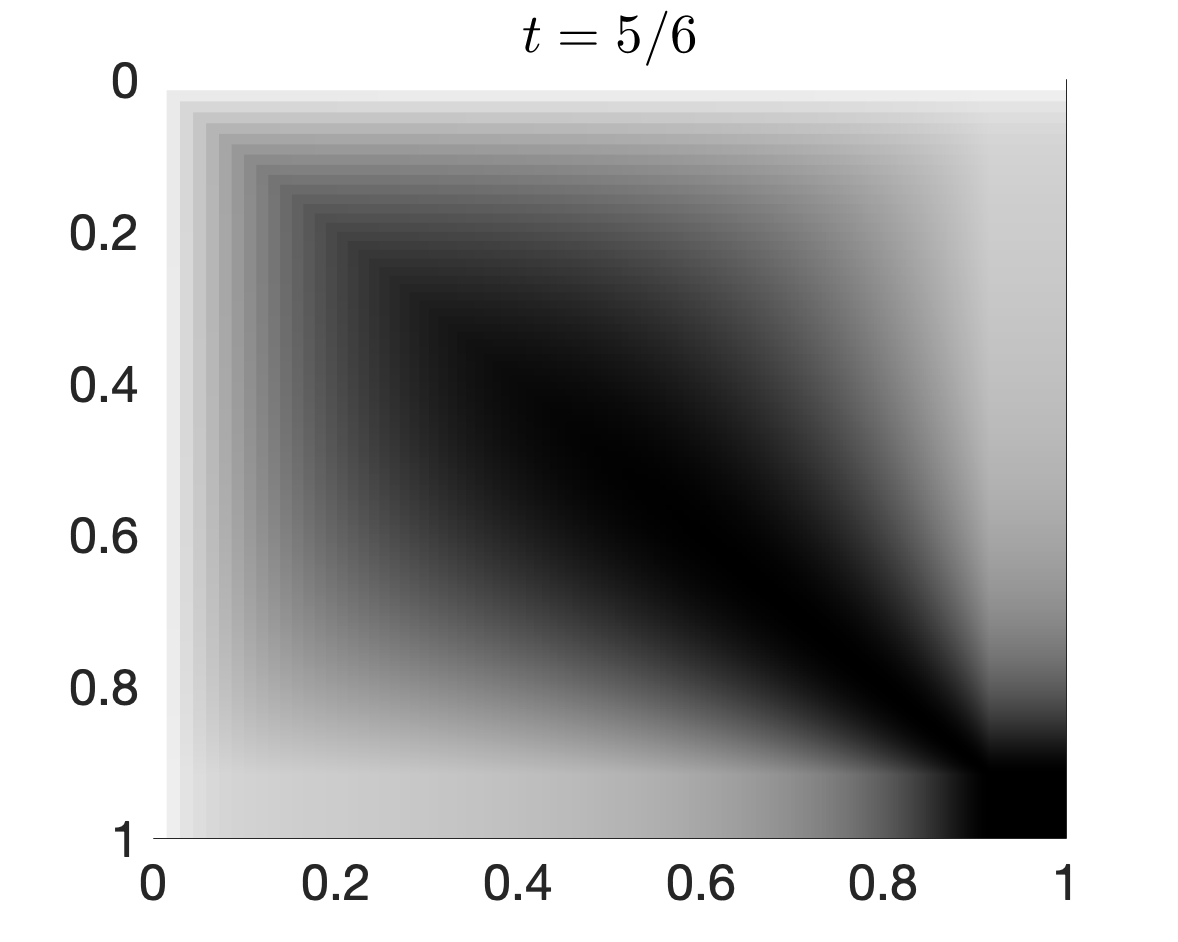}
\includegraphics[width=4.96cm]{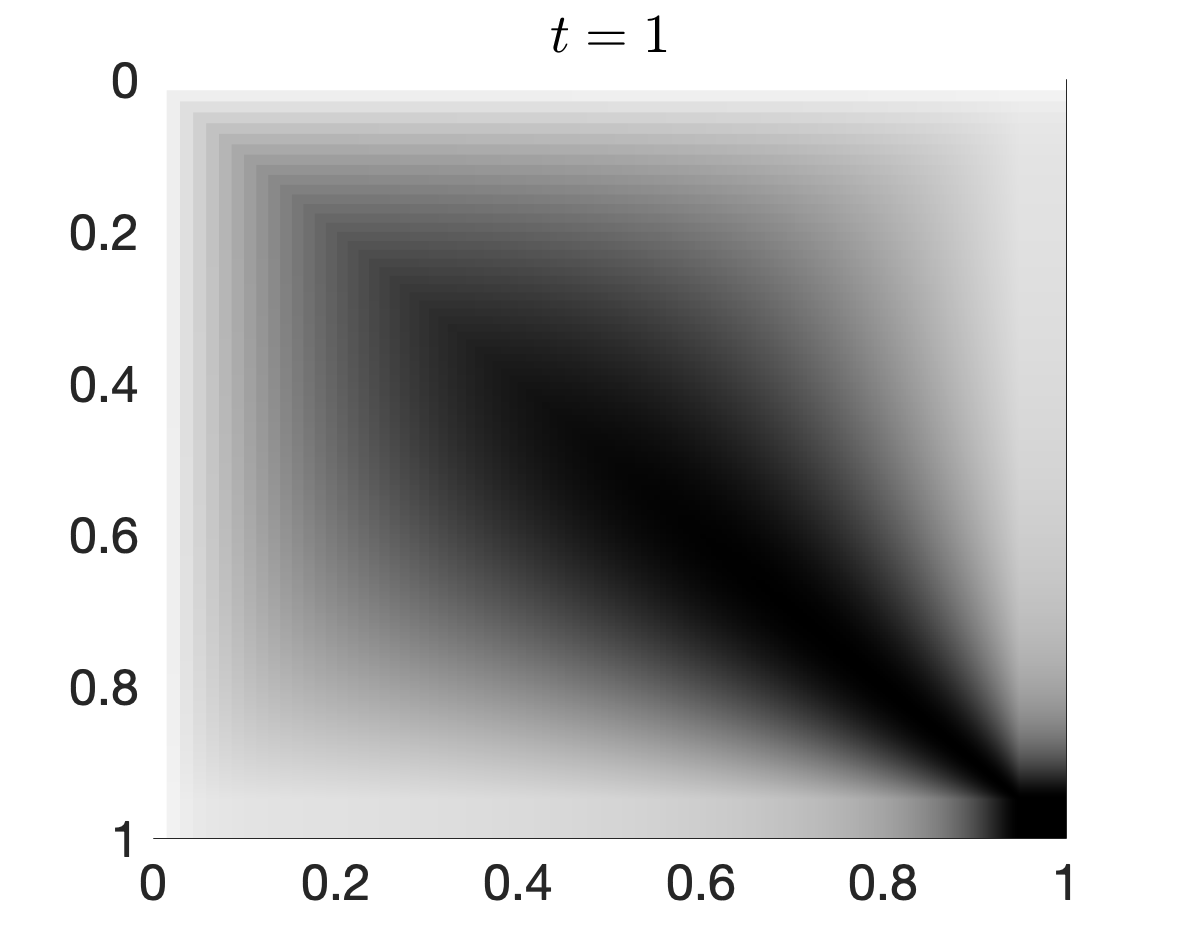}
\end{center}
\caption{\label{ExB} An illustration of $h^{G_{100}}(t)$ (top two rows) and $g^{(F^*)}(t)$ (bottom two rows) for $t=\frac16,\frac13,\frac12,\frac23,\frac56,1$.}
\end{figure}

To illustrate Theorem \ref{thm:CDalt} and Proposition \ref{prop:CDLLN} we suppose that
\begin{equation}
\label{eq:NER}
\gamma=3, \quad \lambda(t,u,v,F,\tilde h)= 4 + 60 \,s(\Delta, \tilde h), \quad \text{and} \quad \mu(t,u,v,F,\tilde h)= 120(u-v)^2,
\end{equation}
where $s(\Delta, \tilde h)$ denotes the triangle density in $\tilde h$. In Figure \ref{ExB} we plot, under Assumption \ref{ass:VL}, $h^{G_n}(\cdot)$ for $n=100$ and the corresponding fluid limit $g^{(F^*)}$ given in Proposition \ref{prop:CDLLN} for $t=\frac{1}{6},\frac{1}{3},\frac{1}{2},\frac{2}{3},\frac{5}{6},1$. 

The evolution of the graphon valued processes illustrated in Figure~\ref{ExB} can be understood by analysing the rates in Equation \eqref{eq:NER}. When $t$ is small (i.e., $t \leq \frac16$), the constant term `4' in $\lambda(\cdot)$ is much larger than the terms $120(u-v)^2$ and $60s(\Delta, \tilde h)$. This is because all vertices start with the same type $0$ (so that $120(X_i(t)-X_j(t))^2 = 0$ with high probability) and initially the graph contains no edges (so that $s(\Delta, \tilde h^{G_n(t)}) \approx 0$). Consequently, at $t=\frac16$, the graph is similar to what it would be if $\lambda(\cdot)=4$ and $\mu(\cdot)=0$, as in the illustrative example. As $t$ increases, both the triangle density and the squared difference in the ages $(X_i(t)-X_j(t))^2$ (on average) increase, which means that the terms $60 s(\Delta, \tilde g^{G_n(t)})$ and $120(X_i(t)-X_j(t))^2$ have an increasing influence on the dynamics of the process. Initially, the term $60 s(\Delta, \tilde g^{G_n(t)})$ grows faster (on average) than $120(X_i(t)-X_j(t))^2$ and, consequently, when $t=\frac13, \frac12, \frac23$, the process $g^{G_n(t)}$ has a high edge density. However, as $t$ increases further ($t=\frac56,1$) the squared differences in the ages $120(X_i(t)-X_j(t))^2$ (on average) become increasingly dominant and, consequently, pairs of vertices with significantly different ages are much less likely to be connected by an edge. Due to Assumption \ref{ass:VL} these vertices lie away from the diagonal connecting $(0,0)$ to $(1,1)$, and therefore we observe fewer edges in these regions of the plots.

\begin{figure}
\centering
\includegraphics[width=14cm]{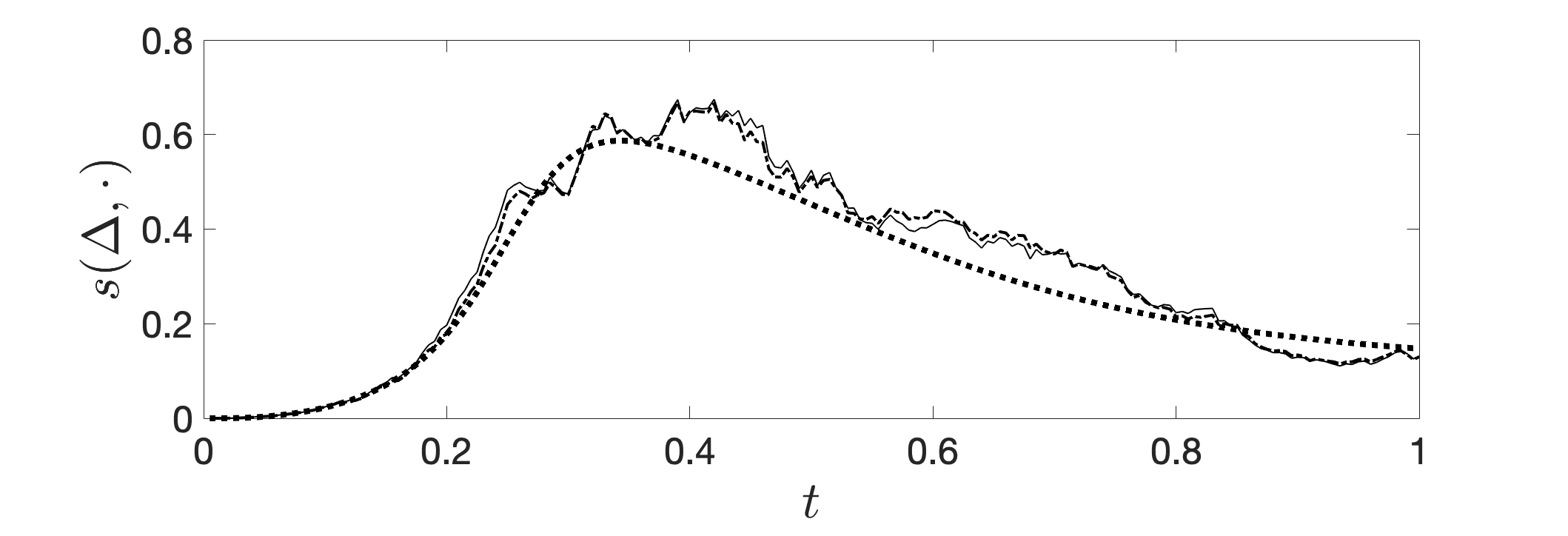}
\includegraphics[width=14cm]{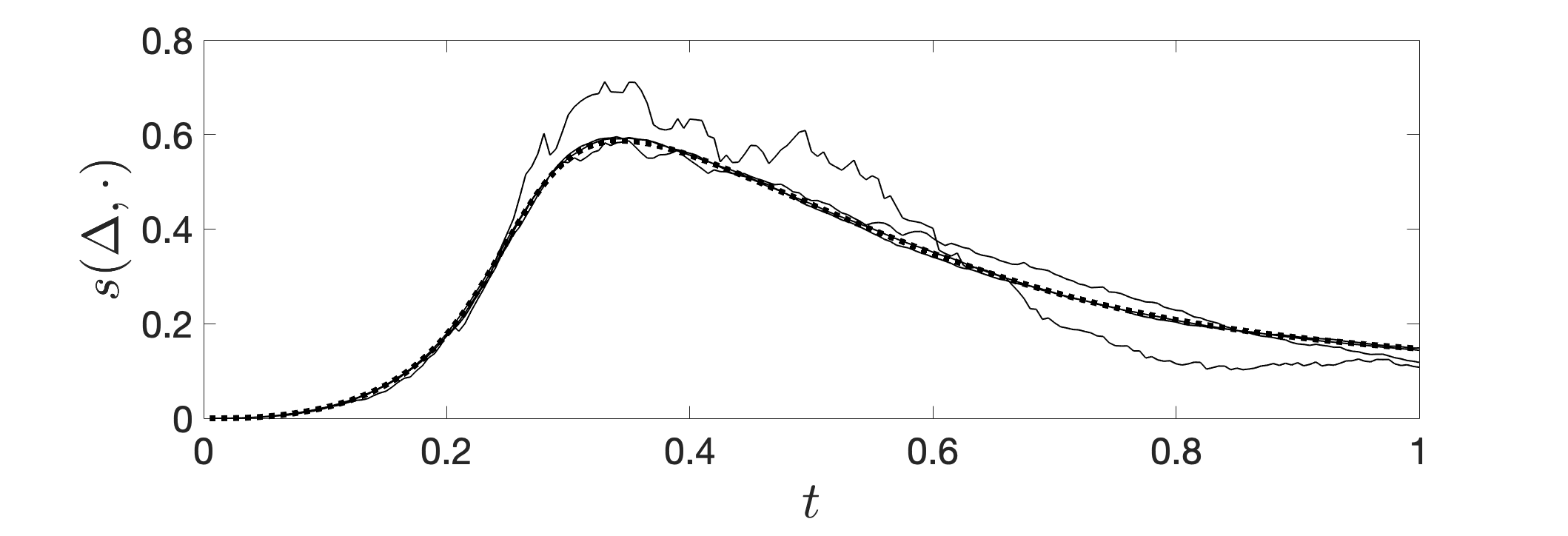}
\caption{Top panel: $s(\Delta,\tilde h^{(G_{100})}(t))$ (solid line), $s(\Delta,\tilde g^{(F_{100})}(t))$ (dashed line), and $s(\Delta,\tilde g^{(\bar F)}(t))$ (dotted line) for $t \in [0,1]$. Bottom panel: $s(\Delta,\tilde g^{(F^*)}(t))$ (dotted line) and $s(\Delta,\tilde g^{(F_{n})}(t))$ for $n=10^2,10^3,10^4,10^5$ (solid lines) and $t\in [0,1]$.}
\label{ExBTD}
\end{figure}

In the top panel of Figure \ref{ExBTD} we plot the triangle density of $h^{G_{100}}(t)$ (solid line), $g^{(F_{100})}(t)$ (dashed line), and $g^{(F^*)}$ (dotted line). To obtain the outcome of $g^{(F_{100})}(t)$, we take the \emph{same} outcome of the empirical type processes $F_{100}(\cdot)$ that was used to generate the outcome of $h^{G_{100}}(t)$ and insert it into the differential equation in \eqref{eq:GFdef}. We can therefore view the path of $h^{G_{100}}(t)$ as having randomness in both the types of the vertices and the outcomes of the edges, and view the path of $g^{(F_{100})}(t)$ as the same path but with the randomness in the outcomes of the edges removed. Because there are only $n=100$ vertices and ${n \choose 2}=4950$ edges, it is much more computationally efficient to simulate $g^{(F_{100})}(t)$ than it is to simulate $h^{G_{100}}(t)$. We note that in the top panel of Figure \ref{ExBTD} the solid and dashed curves follow each other closely. This is because, as Theorem \ref{thm:CDalt} suggests, fluctuations in the process are considerably more likely to be caused by fluctuations in the types of the vertices than in the specific outcomes of the edges given these types. This means for the purposes of simulation, when $n$ is sufficiently large ($n \geq 100$) it suffices to simulate $g^{(F_{n})}(t)$. In the bottom panel of Figure \ref{ExBTD} we simulate the triangle density of $g^{(F_n)}(t)$ for $n=10^2,10^3,10^4,10^5$ and observe convergence to the fluid limit.


\subsection{Different edge-switching dynamics, equivalent sample-path LDP}
\label{sec:ExDS1}

Next suppose that the process $\tilde h^{G_n(\cdot)}$ has the same underlying driving process (i.e., the same $\{(X_i(t))_{t \geq 0}\}_{i \in \mathbb{N}}$) as in Section \ref{Sec:APP1}, but different edge-switching dynamics. In particular, we consider edge switching dynamics inspired by \cite{AdHR19}. Let $(U_{ij})_{1\leq i < j \leq n}$ be a sequence of independent uniform variables on $[0,1]$, and suppose that edge $ij$ is active if 
\begin{equation}
U_{ij} \leq H(t; X_i(t), X_j(t), F_n(t;)),
\end{equation} 
where $H$ is given by \eqref{eq:FF} and \eqref{eq:HFF}. Observe that for any edge $ij$ the random number $U_{ij}$ is sampled just once, rendering this process not Markov. Nonetheless, by Proposition \ref{pr:point} we immediately have that, in the pointwise topology, the sequence of processes satisfies the same LDP as the processes described in Section \ref{sec:ExDS1}. To strengthen the topology it remains to verify establish exponential tightness.  This can be done by using Proposition \ref{lem:TC}, which leads to the following result.

\begin{proposition}
\label{prop:DED}
The sequence of processes $\{ (\tilde h^{G_n(t)})_{t \geq 0} \}_{n \in \mathbb{N}}$ described in Section \ref{sec:ExDS1} satisfies the LDP with rate $n$ and with rate function \eqref{eq:RFex}.
\end{proposition}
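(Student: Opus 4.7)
My plan is to split the argument along the lines of Theorem~\ref{thm:LDPmain}: first establish the LDP in the pointwise topology via Proposition~\ref{pr:point}, and then upgrade it to the full sample-path LDP by checking the exponential-tightness criterion in Proposition~\ref{lem:TC}. The first half requires almost no extra work. Because the processes in Section~\ref{sec:ExDS1} are built from the same driving process as in Section~\ref{Sec:APP1} (the multivariate age process $\{X_i(\cdot)\}$) and the same edge-activation function $H$ from \eqref{eq:FF}--\eqref{eq:HFF}, and because Proposition~\ref{pr:point} depends only on the one-time marginals of $h^{G_n(t)}$ together with Assumption~\ref{ass:CP} and the LDP of Lemma~\ref{lem:LDPDP} for the driving process at rate $n$, the LDP in the pointwise topology with rate function \eqref{eq:RFex} would follow immediately.

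The real work is to verify
\begin{equation*}
\lim_{\delta\downarrow 0}\limsup_{n\to\infty}\frac{1}{n}\log\pp\!\left(C_n(t,\delta)>\varepsilon {n \choose 2}\right)=-\infty
\end{equation*}
for every $t\in[0,T]$ and $\varepsilon>0$. The key observation, which is what distinguishes this model from the Markov dynamics of Section~\ref{Sec:APP1}, is that each $U_{ij}$ is sampled once and frozen, so for each edge $ij$ one has the pathwise bound
\begin{equation*}
\sup_{t\le u\le v\le t+\delta}\bigl|E^{(n)}_{ij}(u)-E^{(n)}_{ij}(v)\bigr|\;\le\;\mathbbm{1}\bigl\{U_{ij}\in [\underline H_{ij},\overline H_{ij}]\bigr\},
\end{equation*}
where $\underline H_{ij},\overline H_{ij}$ denote the infimum and supremum over $s\in[t,t+\delta]$ of the map $s\mapsto H(s;X_i(s),X_j(s),F_n(s;\cdot))$. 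Conditional on the driving process $\mathcal{F}_{\mathrm{DP}}$, the $U_{ij}$'s are still i.i.d.\ uniform on $[0,1]$, so these indicators are independent Bernoulli random variables with parameters $p_{ij}:=\overline H_{ij}-\underline H_{ij}$.

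Next I would bound the oscillations. Between successive Poisson rings of vertices $i$ and $j$ the maps $s\mapsto X_i(s),X_j(s)$ are affine with slope one, and the integral representation \eqref{eq:HFF} combined with the Lipschitz assumption on $\lambda,\mu$ yields a uniform Lipschitz constant $L<\infty$ so that $p_{ij}\le L\delta+N_{ij}^{[t,t+\delta]}$, where $N_{ij}^{[t,t+\delta]}$ counts the clock rings at vertices $i$ or $j$ in $[t,t+\delta]$. Summing over edges and writing $N_\ast^{[t,t+\delta]}$ for the total number of clock rings in $[t,t+\delta]$, a $\mathrm{Pois}(n\gamma\delta)$-distributed random variable, gives
\begin{equation*}
\bar p \;:=\; {n \choose 2}^{-1}\sum_{i<j}p_{ij}\;\le\;L\delta+\tfrac{2}{n}\,N_\ast^{[t,t+\delta]}.
\end{equation*}

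To close the argument I would fix $\varepsilon>0$, choose $\delta$ so small that $L\delta<\varepsilon/4$, and split according to whether $N_\ast^{[t,t+\delta]}$ exceeds $\varepsilon n/8$. On the atypical side a Cram\'er estimate for a sum of $n$ i.i.d.\ $\mathrm{Pois}(\gamma\delta)$ variables contributes a probability of order $\exp\{-nI_{\gamma\delta}(\varepsilon/8)\}$, and $I_{\gamma\delta}(\varepsilon/8)\to\infty$ as $\delta\downarrow 0$. On the typical side one has $\bar p\le\varepsilon/2$, and the conditional Chernoff bound for the sum of independent Bernoullis yields $\pp(C_n(t,\delta)>\varepsilon{n \choose 2}\mid\mathcal{F}_{\mathrm{DP}})\le\exp\{-c\varepsilon {n \choose 2}\}$ for some absolute $c>0$. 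Both contributions send $n^{-1}\log\pp(\cdot)$ to $-\infty$ as $\delta\downarrow 0$, which verifies \eqref{eq:Tcon} and hence exponential tightness. The hard part is really the Lipschitz control of the integral \eqref{eq:HFF} uniformly in its functional argument $F_n$ across all realisations of the driving process; once that estimate is in hand, the Chernoff/Cram\'er split is routine.
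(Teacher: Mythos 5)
Your proposal is correct and follows essentially the same route as the paper: the pointwise LDP is inherited from Proposition \ref{pr:point}, and exponential tightness is verified through Proposition \ref{lem:TC} by splitting the edge changes in $[t,t+\delta]$ into those caused by an endpoint's Poisson clock ringing and those caused by $U_{ij}$ falling in the $O(\delta)$-wide oscillation window of $H$ (controlled by the boundedness of $\lambda,\mu$), each handled by a Chernoff/Cram\'er estimate. The only differences from the paper's argument are cosmetic: you bound the total number of rings by a Poisson variable and use a conditional sum of independent Bernoullis, whereas the paper dominates the two contributions by binomial random variables and even controls $\sup_{t\in[0,T]}C_n(t,\delta)$, which is more than Proposition \ref{lem:TC} requires.
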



\subsection{The most likely path to an unusually small edge density}
\label{sec:ExDS3}

Consider the illustrative example described in Section \ref{sec:example} with $\lambda = \gamma$. Observe that in this case \eqref{eq:IEH} simplifies to 
\begin{equation}
\label{eq:43S}
H(u,v,F) = u \wedge v.
\end{equation} 
Suppose that we would like to determine the most likely path the process takes to a prescribed edge density $e^*$ at time $T$. The standard method involves two steps: first compute the most likely state of the process at time $T$ given this edge density, and afterwards use this computation to obtain the most likely trajectory of the process. Below we focus only on the first step and demonstrate that it is far simpler to numerically compute the most likely state of the process at time $T$ when $e^*$ is below the expected edge density than when it is above the expected edge density. Note that, given \eqref{eq:43S}, the results in this section apply to the models introduced in Sections \ref{Sec:APP1} and \ref{sec:ExDS1} (with the above simplification).


\subsubsection{Most likely state of the process at time $T$}
\label{Sec:ASG}

Let $Q$ denote the distribution of $X_v(t)$, where $X_v(t)$ is defined in \eqref{eq:XvD}. Note that 
\begin{equation}
\label{eq:Qex}
Q({\rm d}x) = \begin{cases}
{\rm d}x, \qquad & \text{if }  x < F^{{\rm exp}}(T), \\
1-F^{{\rm exp}}(T), \qquad &\text{if }  x =F^{{\rm exp}}(T), \\
0, \qquad &\text{otherwise}.
\end{cases}
\end{equation} 
However, for the moment we will assume that $Q$ is a general measure on $[0,1]$. We first consider the event that $G_n$ has an unusually small edge density $e^*$ at time $T$. By Theorem~\ref{thm:LDPTP}, the corresponding variational problem is 
\begin{equation}
\label{eq:OP}
\begin{array}{lll}
&{\rm minimize}\qquad  &\,\int_{0}^1 \log \left( \frac{{\rm d} P}{{\rm d}Q}  \right) {\rm d} P \\[0.2cm]
&{\rm subject\, to} \qquad &\,2 \int_0^1 \int_0^y x\, P({\rm d}x)\, P({\rm d} y) \leq e^* \\[0.2cm]
&{\rm over} &\,P \in \mathcal{M}([0,1]).
\end{array}
\end{equation}

\begin{proposition}
\label{prop:convex}
The feasible region of the variational problem described in \eqref{eq:OP} is convex.
\end{proposition}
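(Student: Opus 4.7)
The plan is to recognise the constraint functional
\[
\phi(P) := 2\int_0^1\int_0^y x\,P(dx)\,P(dy)
\]
as a \emph{convex} quadratic form on the convex set $\mathcal{M}([0,1])$. Once this is shown, the feasible region is the intersection of $\mathcal{M}([0,1])$ with the sublevel set $\{\phi \le e^*\}$ of a convex functional, and is therefore convex.

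First I would symmetrise. Writing $2\int_0^1\int_0^y x\,P(dx)P(dy)$ as two identical copies of the same integral and swapping the roles of $x$ and $y$ in one of them, with care on the diagonal, yields the clean decomposition
\[
\phi(P) \;=\; \int_{[0,1]^2}(x\wedge y)\,P(dx)\,P(dy) \;+\; \int_{[0,1]}x\,P(\{x\})\,P(dx) \;=:\; \phi_1(P)+\phi_2(P).
\]
The central observation is that the kernel $k(x,y):=x\wedge y$ is positive semi-definite, since it is the covariance kernel of Brownian motion. Concretely, the layer-cake identity $x\wedge y=\int_0^1 \mathbf{1}_{[t,1]}(x)\,\mathbf{1}_{[t,1]}(y)\,dt$ together with Fubini give
\[
\phi_1(P)=\int_0^1 P([t,1])^2\,dt.
\]
For each fixed $t$ the map $P\mapsto P([t,1])$ is affine in $P$, so $P\mapsto(P([t,1]))^2$ is convex in $P$, and non-negative integration in $t$ preserves convexity; hence $\phi_1$ is convex.

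For the diagonal term, writing $\phi_2(P)=\sum_{a\in\mathrm{atoms}(P)}a\,(P(\{a\}))^2$ and applying convexity of $u\mapsto u^2$ atom-by-atom (with the non-negative weights $a\in[0,1]$) shows that $\phi_2$ is convex too. Summing, $\phi=\phi_1+\phi_2$ is convex, which gives convexity of the feasible region. The only delicate step is the symmetrisation bookkeeping: the diagonal correction $\phi_2$ cannot be dropped, because the minimiser of \eqref{eq:OP} may well carry atoms, inheriting the atom that $Q$ itself has at $F^{\mathrm{exp}}(T)$ (cf.\ \eqref{eq:Qex}). Once that contribution is tracked correctly, the argument reduces to the positive semi-definiteness of $k$, and I expect no further obstacle.
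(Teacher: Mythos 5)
Your proof is correct, but it is organised differently from the paper's, so a brief comparison is in order. The paper verifies convexity of the constraint set pointwise: it expands the quadratic form at the mixture $P_3=cP_1+(1-c)P_2$, recognises the cross term as $\mathbb{E}(X_1^{(1)}\wedge X_2^{(1)})$ for independent draws from $P_1$ and $P_2$, and bounds this by $e^*$ using the tail representation $\mathbb{E}(X_1\wedge X_2)=\int_0^1 \mathbb{P}(X_1\geq x)\,\mathbb{P}(X_2\geq x)\,{\rm d}x$ together with the Cauchy--Schwarz inequality. You exploit the same identity, $x\wedge y=\int_0^1\mathbbm{1}\{x\geq t\}\,\mathbbm{1}\{y\geq t\}\,{\rm d}t$, but one level higher: it exhibits $\phi_1(P)=\int_0^1 P([t,1])^2\,{\rm d}t$ as an integral of squares of affine functionals of $P$, hence convex, so \emph{every} sublevel set is convex and no cross-term bookkeeping or Cauchy--Schwarz is needed. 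Both proofs thus rest on the positive semi-definiteness of the kernel $x\wedge y$; yours yields the marginally stronger statement that the constraint functional itself is convex. One remark on the diagonal term: in the paper's own reading the constraint functional \emph{is} $\mathbb{E}_P(X^{(1)}\wedge X^{(2)})=\phi_1(P)$ (the displayed double integral is identified with this expectation at the start of the paper's proof), so your atomic correction $\phi_2$ is not actually part of the constraint; carrying it along is harmless, since you show it is convex as well, but it can simply be dropped. Your caution is nevertheless substantive in the opposite direction: under the strict reading $2\int_0^1\int_{[0,y)}x\,P({\rm d}x)\,P({\rm d}y)=\phi_1(P)-\phi_2(P)$, convexity of the feasible region genuinely fails (take $e^*=0$, $P_1=\delta_1$, $P_2=\delta_{1/2}$: both are feasible while their average is not), so it is the interpretation as $\mathbb{E}_P(X^{(1)}\wedge X^{(2)})$ --- the one consistent with the model, where the diagonal contributes only $O(1/n)$ to the edge density --- under which the proposition holds, and your argument covers it.
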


\noindent
Because the objective function in \eqref{eq:OP} is strictly convex, Proposition \ref{prop:convex} implies that the variational problem has a unique local minimum which is the \emph{global} minumium. Consequently, there are several numerical methods that we can apply to find the global minimum.

If instead we consider the probability that $G_n(T)$ has an unusually high edge density, then we must solve the same variational problem as described in \eqref{eq:OP} with `$\leq$' replaced by `$\geq$'. Now the feasible region is no longer convex. Consequently, as we illustrate with a numerical example, the corresponding variational problem may have multiple local maxima and multiple local minima.

Let 
\begin{equation}
\label{eq:NumQ}
Q(x)=
\begin{cases}
\frac{4}{5}-\frac{1}{1000}, \quad &\text{if } x=0, \\
\frac{1}{5}, \quad &\text{if } x=\frac{1}{10}, \\
\frac{1}{1000}, \quad &\text{if } x=1.
\end{cases}
\end{equation}
In Figure \ref{Edgedensity1} we plot the rate (the value of the objective function evaluated at the maxima) against the edge density $e^*$. When $e^* \approx 0.085$ there are two distinct optimal solutions corresponding to 
\[
P^*_1(x) = \begin{cases}
0.0782, \quad &\text{if } x=0,\\
0.9159, \quad &\text{if } x=\frac{1}{10}, \\
0.0059, \quad &\text{if } x=1,
\end{cases} 
\qquad \qquad  
P^*_2(x) = \begin{cases}
0.3728, \quad &\text{if } x=0,\\
0.4020, \quad &\text{if } x=\frac{1}{10}, \\
0.2252, \quad &\text{if } x=1.
\end{cases} 
\]
For $e^*\approx 0.085$, solutions near $P^*_1$ and $P^*_2$ are local minima. These local minima are illustrated by the dotted curve in Figure \ref{Edgedensity1}: values above $0.085$ correspond to solutions that are close to $P^*_1$ and values below $0.085$ correspond to solutions that are close to $P^*_2$. Observe that we can restrict our search of an optimal measure $P$ to measures that are absolutely continuous with respect to $Q$. For $Q$ given by \eqref{eq:NumQ}, these measures live on the 2-dimensional simplex. Consequently, there can be at most two local minima. If $Q$ has a continuous component (as it does in \eqref{eq:Qex}), then there is, in principle, no bound on the number of local minima. This makes it difficult to determine if a global minimum has been reached using numerical methods.

\begin{figure}
\begin{center}
\includegraphics[width=11cm]{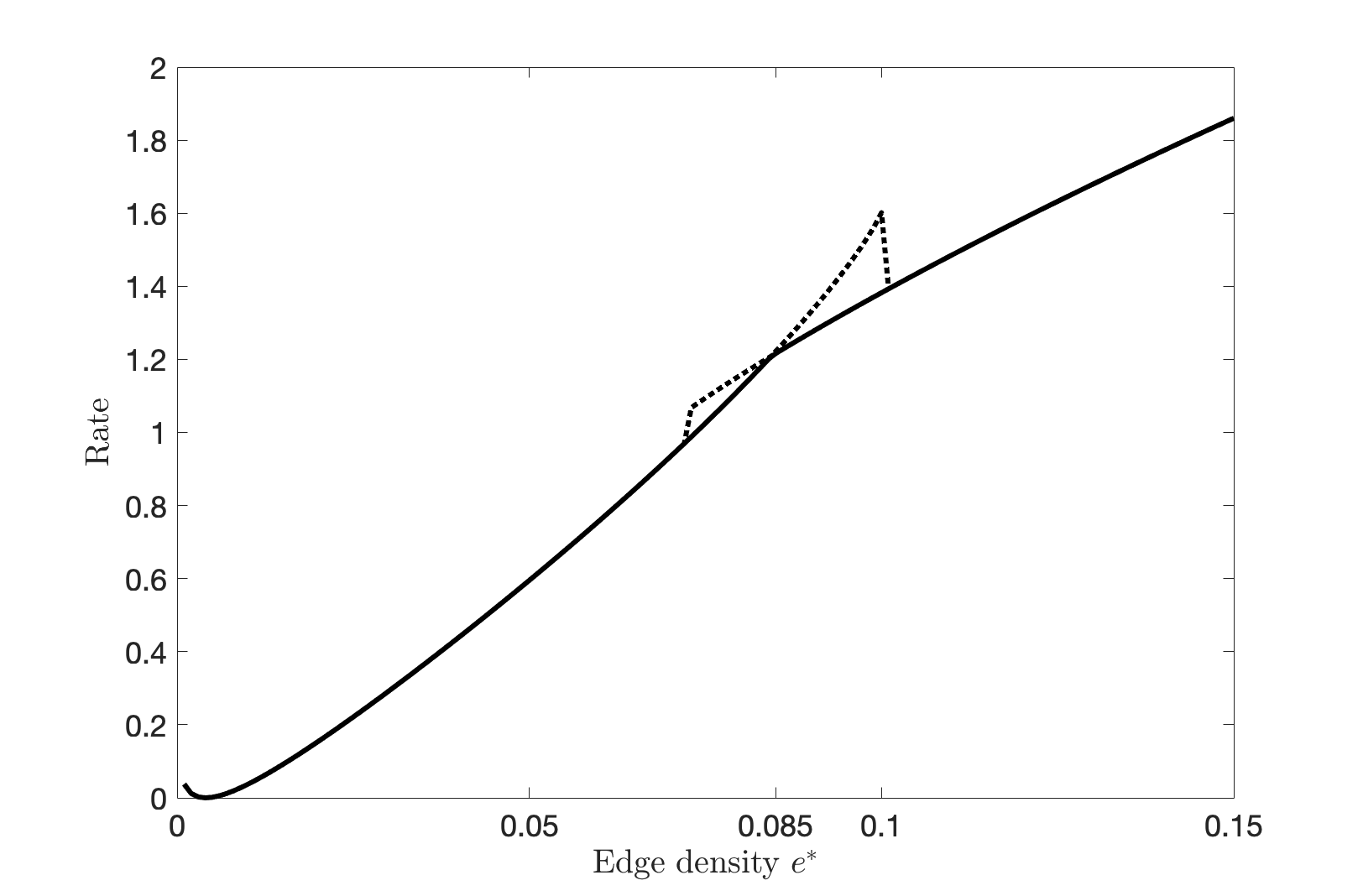}
\end{center}
\caption{\label{Edgedensity1} The rates of the local minima for different prescribed edge densities $e^*$ when $Q$ is given by \eqref{eq:NumQ}.}
\end{figure}


\section{Proofs}
\label{sec:proof}

Sections \ref{sec:IRGpr}--\ref{sec:ExSDpr} contain the proofs of the theorems in Sections \ref{sec:IRG}--\ref{sec:ExSD}.


\subsection{Proofs of the results in Section \ref{sec:IRG}}
\label{sec:IRGpr}

We prove Theorem \ref{thm:LDPTP} via a sequence of lemmas. Recall that we use $\hat G_n$ to denote an inhomogeneous Erd\H{o}s--R\'enyi random graph (IRG) and $G_n$ to denote a inhomogeneous random graph with type dependence (IRGT). 


\subsubsection{Inhomogeneous Erd\H{o}s--R\'enyi random graphs}

The first lemma is similar to results presented in \cite{DS19} and to \cite[Theorem 4.1]{DM20}, the primary difference being that in \cite{DS19,DM20} there is a single reference graphon $r$, i.e., $r_n = r$ for all $n \geq 0$. The generalisation comes at the cost of the addition of Assumption \ref{ass:AB} in the lower bound (which is not made in \cite[Theorem 4.1]{DM20}, but is in \cite{DS19}).

\begin{lemma}
\label{lem:gIHR}
Let $r_n$ denote the reference graphon for $\hat G_n$ and suppose that $r_n \to r$ in $L^1$. Then
\begin{equation}
\limsup_{n \to \infty} \frac{1}{{n \choose 2}} \log \mathbb{P}(\tilde h^{\hat G_n} \in \mathcal{C}) 
\leq - \inf_{\tilde h \in \mathcal{C}} \tilde I_{r}(\tilde h), \quad \forall \, \mathcal{C} \text{ closed},
\end{equation}
and, subject to Assumption \ref{ass:AB},
\begin{equation}
\liminf_{n \to \infty} \frac{1}{{n \choose 2}} \log \mathbb{P} (\tilde h^{\hat G_n} \in \mathcal{O}) 
\geq - \inf_{\tilde h \in \mathcal{O}} \tilde I_{r}(\tilde h), \quad \forall \, \mathcal{O} \text{ open}.
\end{equation}
\end{lemma}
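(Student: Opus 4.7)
My approach is to reduce the statement to Theorem~\ref{Thm:LDPIRG} via \emph{exponential equivalence} at scale ${n\choose 2}$. Let $\hat G_n^\star$ be an auxiliary IRG on $[n]$ with the fixed reference graphon $r$, coupled to $\hat G_n$ through a common family of i.i.d.\ uniforms $(U_{ij})_{1\le i<j\le n}$ on $[0,1]$: include edge $ij$ in $\hat G_n$ iff $U_{ij}\le r_n(i/n,j/n)$, and in $\hat G_n^\star$ iff $U_{ij}\le r(i/n,j/n)$. The number $D_n$ of disagreeing edges is a sum of independent Bernoullis whose mean, after passing to block-constant representatives of $r_n$ and $r$ on the $n\times n$ lattice, equals $\sum_{i<j}|r_n(i/n,j/n)-r(i/n,j/n)|=o(n^2)$ thanks to $\|r_n-r\|_{L^1}\to 0$.

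A Chernoff/Bernstein bound then shows that, for every $\varepsilon>0$,
$$
\limsup_{n\to\infty}\frac{1}{{n\choose 2}}\log\mathbb{P}\bigl(D_n>\varepsilon{n\choose 2}\bigr)=-\infty.
$$
Since $d_\square(h^{\hat G_n},h^{\hat G_n^\star})\le D_n/n^2$ (the two empirical graphons differ only on disagreeing cells) and $\delta_\square\le d_\square$, this gives exponential equivalence of $\{\tilde h^{\hat G_n}\}$ and $\{\tilde h^{\hat G_n^\star}\}$ at scale ${n\choose 2}$ in the cut metric. The upper bound then follows from Theorem~\ref{Thm:LDPIRG} applied to $\hat G_n^\star$ and the standard transfer principle for exponential equivalence.

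For the lower bound the same transfer principle formally works, but the clean way to match the variational rate $\tilde I_r$ on the nose is via the direct change-of-measure route: tilt the law of $\hat G_n$ towards a block-constant approximation $h^{*,(m)}$ of a given $\tilde h\in\mathcal{O}$, compute the Radon--Nikodym cost $\sum_{i<j}\mathcal R(h^{*,(m)}(i/n,j/n)\,|\,r_n(i/n,j/n))$, and let $m\to\infty$. Here Assumption~\ref{ass:AB} becomes essential: with $r$ bounded in $[\eta,1-\eta]$, the map $b\mapsto \mathcal R(a|b)$ is Lipschitz on $[\eta,1-\eta]$ uniformly in $a$, so $L^1$-closeness $r_n\to r$ forces the Riemann sums to converge to ${n\choose 2}I_r(h^{*,(m)})$. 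Without boundary control, singularities of $a\log(a/b)$ as $b\to 0^+$ or $b\to 1^-$ prevent one from equating the tilt cost with $I_r$.

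The main obstacle I anticipate is the Riemann-style reduction from the continuum convergence $\|r_n-r\|_{L^1}\to 0$ to the discrete lattice sum $\tfrac1{n^2}\sum_{i<j}|r_n(i/n,j/n)-r(i/n,j/n)|\to 0$, since $r_n,r$ are only equivalence classes of measurable functions and pointwise evaluation is not canonical. I would handle this by replacing each graphon by its cell-average on the $n\times n$ lattice, quantifying the $L^1$ block-approximation error (which vanishes for any integrable function as the mesh refines), and only then applying the coupling and Chernoff steps to the block versions; once this technicality is dispatched, the rest of the argument is essentially mechanical.
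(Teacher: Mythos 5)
Your upper-bound reduction has a genuine gap at its central step: the coupling estimate requires $\sum_{i<j}|r_n(i/n,j/n)-r(i/n,j/n)|=o(n^2)$, and this is simply not implied by $\lVert r_n-r\rVert_{L^1}\to 0$. The limit $r$ is only an a.e.-equivalence class, so its lattice values $r(i/n,j/n)$ are not canonical, and even for a fixed representative the grid sums need not track the $L^1$ distance (e.g.\ with $r\equiv\tfrac12$ and $r_n$ equal to $1$ on vanishingly small neighbourhoods of the grid points and $\tfrac12$ elsewhere, one has $\lVert r_n-r\rVert_{L^1}\to 0$ while every edge disagrees with probability $\tfrac12$ under your coupling, so $D_n$ is of order $n^2$). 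Your proposed repair, replacing both graphons by their cell averages on the $n$-grid, does not close the gap: averaging changes the law of the auxiliary graph, so the object you end up comparing with is the IRG driven by the block averages $\bar r^{(n)}$ of $r$ --- a \emph{sequence} of reference graphons converging to $r$ in $L^1$ --- and Theorem \ref{Thm:LDPIRG}, which is stated for the single fixed graphon $r$ with edge probabilities $r(i/n,j/n)$, does not cover it. At that point you would be invoking (a special case of) the very statement of Lemma \ref{lem:gIHR} that you are trying to prove. A further warning sign is that, were the exponential-equivalence step valid, it would transfer the \emph{lower} bound as well, with no need for Assumption \ref{ass:AB}; the fact that boundary control is genuinely needed there (both in this paper and in the inhomogeneous LDP literature it builds on) indicates that the reduction is too cheap.

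For contrast, the paper proves the lemma by rerunning the argument of \cite[Theorem 4.1]{DM20} with the varying sequence $r_n$, noting that $r_n=r$ is never used in the upper bound, and, for the lower bound, replacing Jensen's inequality by dominated convergence, which is where Assumption \ref{ass:AB} enters. The crucial structural difference is that in that route the limit $r$ only ever appears inside integral functionals such as $I_r$, so $L^1$ convergence of $r_n$ suffices and no comparison of lattice values of $r_n$ and $r$ is ever needed. Your lower-bound tilting sketch is much closer in spirit to this (and uses Assumption \ref{ass:AB} in essentially the right place), but it too identifies Riemann-type lattice sums such as $\sum_{i<j}\mathcal R(h^{*,(m)}(i/n,j/n)\,|\,r_n(i/n,j/n))$ with ${n\choose 2}$ times an integral; this identification is legitimate only when $r_n$ is a block graphon adapted to the $n$-grid (as in the paper's application, cf.\ Lemma \ref{lem:EFa}, where $r_n\in\mathscr{W}_n$), so you should either make that hypothesis explicit or, as the paper does, carry out the whole argument at the level of integrals.
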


\begin{proof}
The upper bound follows from the same arguments as used in \cite[Theorem 4.1]{DM20} (by noting that the specific requirement that $r_n=r$ is not used there). The lower bound again follows from similar arguments. However, now instead of applying Jensen's inequality we apply the dominated convergence theorem, which is possible because of Assumption \ref{ass:AB}. 
\end{proof}

The previous lemma can be used to obtain the following concentration type result for inhomogeneous Erd\H{o}s--R\'enyi random graphs.
For $r \in \mathscr{W}$, denote $\mathbb{B}_\square(\tilde r, \varepsilon)=\{\tilde h \in \mathscr{W}: \delta_\square(\tilde r, \tilde h) < \varepsilon \}$.

\begin{lemma}
\label{lem:EFa}
Let $\hat G_n$ be an IRG with reference graphon $r_n \in \mathscr{W}_n$. If $\lVert r_n - r \rVert_{L_1} \to 0$, then, for any $r \in \mathscr{W}$,
\begin{equation}
\limsup_{n \to \infty} \frac{1}{{n \choose 2}} \log \mathbb{P}(\tilde h^{\hat G_n} \notin \mathbb{B}_\square(\tilde r, \varepsilon)) 
\leq -\varepsilon^2.
\end{equation} 
\end{lemma}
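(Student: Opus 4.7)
\medskip

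\textbf{Proof plan.} The plan is to derive the concentration bound directly from the LDP upper bound of Lemma \ref{lem:gIHR}, by showing that the rate function $\tilde I_r$ dominates the squared cut distance.

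First, I would observe that the complement of $\mathbb{B}_\square(\tilde r, \varepsilon)$, namely $\mathcal{C}_\varepsilon := \{\tilde h \in \tilde{\mathscr{W}}: \delta_\square(\tilde r, \tilde h) \geq \varepsilon\}$, is closed in $(\tilde{\mathscr{W}},\delta_\square)$. Since $\lVert r_n - r \rVert_{L_1} \to 0$, the upper bound in Lemma \ref{lem:gIHR} applies (no use of Assumption \ref{ass:AB} is needed for the upper bound) and yields
\begin{equation}
\limsup_{n \to \infty} \frac{1}{{n \choose 2}} \log \mathbb{P}\bigl(\tilde h^{\hat G_n} \in \mathcal{C}_\varepsilon\bigr)
\leq - \inf_{\tilde h \in \mathcal{C}_\varepsilon} \tilde I_r(\tilde h).
\end{equation}
It therefore suffices to establish the deterministic graphon-level inequality $\tilde I_r(\tilde h) \geq \varepsilon^2$ for all $\tilde h \in \mathcal{C}_\varepsilon$.

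The main step is to prove that for every $h \in \mathscr{W}$ one has $I_r(h) \geq 2\, d_\square(h,r)^2$. I would establish this by chaining three standard inequalities. By the scalar Pinsker inequality applied pointwise, $\mathcal{R}(h(x,y)\,|\,r(x,y)) \geq 2(h(x,y)-r(x,y))^2$, so after integration
\begin{equation}
I_r(h) \;\geq\; 2 \int_{[0,1]^2} \bigl(h(x,y)-r(x,y)\bigr)^2\, \ddd x\, \ddd y.
\end{equation}
Then, since $[0,1]^2$ has unit Lebesgue measure, Cauchy--Schwarz gives $\lVert h-r\rVert_{L_1}^2 \leq \lVert h-r\rVert_{L_2}^2$; finally, the definition \eqref{cutdist} immediately implies $d_\square(h,r) \leq \lVert h-r\rVert_{L_1}$. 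Combining the three yields $I_r(h) \geq 2\, d_\square(h,r)^2$.

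To pass to the quotient, note that for any measure-preserving $\sigma \in \mathscr{M}$ and any representative $h$ of $\tilde h$, we have $d_\square(h^\sigma, r) \geq \delta_\square(\tilde h, \tilde r)$ by the definition of $\delta_\square$, and hence $I_r(h^\sigma) \geq 2\,\delta_\square(\tilde h,\tilde r)^2$. Taking the infimum over $\sigma$ gives $\tilde I_r(\tilde h) \geq 2\,\delta_\square(\tilde h,\tilde r)^2 \geq 2\varepsilon^2 \geq \varepsilon^2$ for $\tilde h \in \mathcal{C}_\varepsilon$, which when plugged into the display above completes the proof.

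I do not anticipate any serious obstacle: the argument is essentially a Pinsker-type estimate combined with the trivial domination of the cut norm by the $L_1$ norm, and invocation of the already proved Lemma \ref{lem:gIHR}. The only point worth checking carefully is that the upper bound in Lemma \ref{lem:gIHR} is indeed established under merely $r_n \to r$ in $L^1$ (without Assumption \ref{ass:AB}), which is the reason the current lemma does not require the away-from-boundary hypothesis either.
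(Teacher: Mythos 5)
Your proposal is correct and follows essentially the same route as the paper: a pointwise Pinsker/Taylor-type bound $\mathcal{R}(h\,|\,r)\geq 2(h-r)^2$, Jensen/Cauchy--Schwarz to pass from $L_2$ to $L_1$, the trivial domination $d_\square \leq \lVert\cdot\rVert_{L_1}$, and then the upper bound of Lemma \ref{lem:gIHR} applied to the closed complement of $\mathbb{B}_\square(\tilde r,\varepsilon)$. Your explicit handling of the quotient (via $d_\square(h^\sigma,r)\geq\delta_\square(\tilde h,\tilde r)$) and the remark that the upper bound of Lemma \ref{lem:gIHR} does not require Assumption \ref{ass:AB} are both accurate and, if anything, slightly more careful than the paper's own write-up.
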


\begin{proof}
Suppose that $\tilde h \notin \mathbb{B}_\square(r, \varepsilon)$, and let $h$ be any member of the equivalence class $\tilde h$. Using a Taylor expansion in the first inequality and Jensen's inequality in the second inequality, we have 
\begin{equation}
\begin{aligned}
I_r(h) &= \frac{1}{2} \int_{[0,1]^2} {\rm d}x\, {\rm d}y \left[ h(x,y) \log \left( \frac{h(x,y)}{r(x,y)} \right) 
+ (1-h(x,y)) \log \left( \frac{1-g_n(x,y)}{1-r(x,y)} \right) \right] \\
&\geq \int_{[0,1]^2} {\rm d}x\, {\rm d}y\,(h(x,y) - r(x,y))^2 \geq \lVert h - r \rVert_{L_1}^2
\geq d_\square(h, r)^2 \geq \varepsilon^2.
\end{aligned}
\end{equation}
Since $\mathbb{B}_\square(\tilde r, \varepsilon)$ is open, its complement is closed, which implies that we can apply the upper bound in Lemma \ref{lem:gIHR}, from which the result follows.
\end{proof}


\subsubsection{Inhomogeneous random graphs with type dependence}

We next turn our attention to inhomogeneous random graphs with type dependence $G_n$. We first use the previous lemma to show that $G_n$ is close to the induced reference graphon $g^{[F_n]}$ with high probability.

\begin{lemma}
\label{lem:EF}
If Assumption \ref{ass:RG} holds, then 
\begin{equation}
\label{eq:LemEF}
\limsup_{n \to \infty} \frac{1}{{n \choose 2}} \log 
\mathbb{P}( \tilde h^{G_n} \notin \tilde{\mathbb{B}}_\square(\tilde g^{[F_n]}, \varepsilon)) \leq -  \varepsilon^2.
\end{equation}
\end{lemma}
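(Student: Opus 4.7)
The plan is to condition on $F_n$ and reduce the statement to a concentration bound for an inhomogeneous Erd\H{o}s--R\'enyi graph with the induced reference graphon $g^{[F_n]}$. By the tower property together with observation \eqref{ob:SS},
\[
\mathbb{P}\big(\tilde h^{G_n} \notin \tilde{\mathbb{B}}_\square(\tilde g^{[F_n]}, \varepsilon)\big)
= \mathbb{E}\big[\,\mathbb{P}\big(\tilde h^{\hat G_n} \notin \tilde{\mathbb{B}}_\square(\tilde g^{[F_n]}, \varepsilon) \,\big|\, F_n\big)\,\big],
\]
where, conditionally on $F_n$, $\hat G_n$ is an IRG with reference graphon $g^{[F_n]}$. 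Pointwise in the realization of $F_n$, Lemma~\ref{lem:EFa} applied with the constant sequence $r_n = r = g^{[F_n]}$ delivers the asymptotic bound $-\varepsilon^2$.

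The main obstacle is that Lemma~\ref{lem:EFa} is an asymptotic $\limsup$ statement whose rate of approach may a priori depend on the realization of $F_n$, so a naive pointwise application does not immediately produce a uniform bound integrable against the outer expectation. My plan is to upgrade the pointwise statement to a uniform one by exploiting compactness. By Prokhorov's theorem $\mathcal{M}([0,1])$ is weakly compact, and by Assumption~\ref{ass:RG} the map $F \mapsto g^{[F]}$ is continuous into $(\mathscr{W},\|\cdot\|_{L^1})$, so its image $\mathcal{G}$ is a compact subset of $(\mathscr{W},\|\cdot\|_{L^1})$. For fixed $\delta > 0$, I will cover $\mathcal{G}$ by finitely many $L^1$-balls of radius $\delta$ around centers $g^{[F^{(1)}]}, \dots, g^{[F^{(K)}]}$, and partition the probability space accordingly.

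Using $\delta_\square \leq d_\square \leq \|\cdot\|_{L^1}$, on the event that $g^{[F_n]}$ lies in the $k$-th ball one has $\delta_\square(\tilde g^{[F_n]}, \tilde g^{[F^{(k)}]}) \leq \delta$, so the triangle inequality yields
\[
\big\{\tilde h^{G_n} \notin \tilde{\mathbb{B}}_\square(\tilde g^{[F_n]}, \varepsilon),\, \|g^{[F_n]} - g^{[F^{(k)}]}\|_{L^1} \leq \delta\big\}
\subseteq \big\{\tilde h^{G_n} \notin \tilde{\mathbb{B}}_\square(\tilde g^{[F^{(k)}]}, \varepsilon - \delta)\big\}.
\]
Summing over $k$ yields a union bound with $K$ (fixed) terms, each controlling the probability that an IRG whose reference graphon is $L^1$-close to the fixed $g^{[F^{(k)}]}$ falls outside a cut-distance ball around $g^{[F^{(k)}]}$. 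On each piece a further $\delta$ can be absorbed by extracting, via compactness, a convergent subsequence of the realizations of $F_n$ within the $k$-th ball, applying Lemma~\ref{lem:EFa} with the resulting $L^1$-converging reference sequence, and using the triangle inequality once more to pass from the limit to $g^{[F^{(k)}]}$. This delivers an exponential bound at rate $-(\varepsilon - 2\delta)^2$ on each of the $K$ pieces; since $K$ is fixed and independent of $n$, the union bound does not affect the exponential rate, and sending $\delta \downarrow 0$ after $n \to \infty$ recovers the claimed bound $-\varepsilon^2$.
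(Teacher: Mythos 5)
Your proposal is correct and in essence follows the paper's own route: both reduce, via \eqref{ob:SS}, to controlling the worst-case conditional probability over the possible realizations of $F_n$, and both handle the $n$-dependence of the induced reference graphon through compactness of $\mathcal{M}([0,1])$, continuity of $F \mapsto g^{[F]}$ from Assumption \ref{ass:RG}, and Lemma \ref{lem:EFa} applied to an $L^1$-convergent sequence of reference graphons. The only difference is presentational: the paper argues directly by contradiction, extracting a single convergent subsequence of (near-maximizing) conditioning values, whereas you insert a finite $L^1$-covering with an $\varepsilon - 2\delta$ bookkeeping step and then perform the very same subsequence extraction inside each covering piece, so the covering layer is harmless but adds nothing essential.
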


\begin{proof}
Suppose that Assumption \ref{ass:RG} holds. We proceed by contradiction. Suppose that \eqref{eq:LemEF} does not hold. Then there necessarily exist sequences $(n_k)_{k \in \mathbb{N}} \subseteq \mathbb{N}$ and $(F^\star_{n_k})_{k \in \mathbb{N}} \subset \mathcal{M}([0,1])$ such that 
\begin{equation}
\liminf_{k \to \infty} \frac{1}{{n_k \choose 2}} \log 
\mathbb{P} \left( \tilde h^{ G_{n_k}} \notin \tilde{\mathbb{B}}_\square(\tilde g^{[F^\star_{n_k}]}, \varepsilon) 
\mid F_{n_k}= F^\star_{n_k} \right) >- \varepsilon^2,
\end{equation}
where, for each $k\in \mathbb{N}$, $F^\star_{n_k}$ is an empirical distribution function with $n_k$ data points. Since $\mathcal{M}([0,T])$ is compact, there exists a convergent subsequence of $(F^\star_{n_k})_{k \in \mathbb{N}}$. Consequently, without loss of generality we may assume that there exists $F^\star$ such that $F^\star_{n_k} \to F^\star$ in $\mathcal{M}([0,T])$ as $k \to \infty$. Under Assumption \ref{ass:RG} we therefore have 
\begin{equation}
\lVert r^{[F^\star_{n_k}]} - r^{[F^\star]}\rVert_{L_1} \to 0, \qquad \text{as } k \to \infty.
\end{equation}
Recalling that, due to \eqref{ob:SS} (i.e., conditional on the induced reference graphon the graph has the distribution of an inhomogeneous random graph), we can apply Lemma \ref{lem:EFa} to obtain a contradiction.
\end{proof}

The next lemma establishes a large deviation principle for the sequence of induced reference graphons $g^{[F_n]}$.

\begin{lemma}
\label{Cor:Contr}
Subject to Assumptions \ref{ass:LDPtypes} and \ref{ass:RG}, $\{ \tilde g^{[F_n]} \}_{n \in \mathbb{N}}$ satisfies the LDP on $(\tilde{\mathscr{W}}, \delta_\square)$ with rate $\ell(n)$ and with rate function 
\begin{equation}
\label{RateLDPT}
J(\tilde h) = \inf_{F \in \mathcal{M}([0,1])\,:\, \tilde g^{[F]}=\tilde h} K(F). 
\end{equation}
\end{lemma}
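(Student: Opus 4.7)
The statement has the shape of a contraction principle: we transport an LDP known on the space of type measures through the continuous map $F \mapsto \tilde g^{[F]}$. The whole proof reduces to verifying the hypotheses of the classical contraction principle (see e.g.\ Dembo--Zeitouni, Thm.~4.2.1).

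\textbf{Step 1 (continuity of quotienting).} First, I would observe that the map $g \mapsto \tilde g$ from $(\mathscr{W},\lVert \cdot\rVert_{L_1})$ to $(\tilde{\mathscr{W}},\delta_\square)$ is continuous. Indeed, for $g_1,g_2\in \mathscr{W}$ we have the elementary chain
\begin{equation}
\delta_\square(\tilde g_1,\tilde g_2)\le d_\square(g_1,g_2)\le \lVert g_1-g_2\rVert_{L_1},
\end{equation}
using the definition \eqref{cutdist} of the cut distance (taking $S=T=[0,1]$ bounds $d_\square$ by $\lVert\cdot\rVert_{L_1}$) and the fact that $\delta_\square$ is obtained from $d_\square$ by infimum over measure-preserving bijections.

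\textbf{Step 2 (continuity of the composite map).} Combining Step 1 with Assumption \ref{ass:RG}, the composition
\begin{equation}
\Phi\colon \mathcal{M}([0,1])\to (\tilde{\mathscr{W}},\delta_\square),\qquad F\mapsto \tilde g^{[F]},
\end{equation}
is continuous.

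\textbf{Step 3 (goodness of the rate function and contraction).} Since $[0,1]$ is compact, $\mathcal{M}([0,1])$ endowed with the topology of weak convergence is itself compact (and Polish), so every lower semicontinuous rate function on it, in particular $K$ from Assumption \ref{ass:LDPtypes}, automatically has compact level sets, i.e.\ is a \emph{good} rate function. The hypotheses of the contraction principle are thus met: $\{F_n\}$ satisfies the LDP on $\mathcal{M}([0,1])$ at speed $\ell(n)$ with good rate function $K$, and $\Phi$ is continuous. We conclude that $\{\tilde g^{[F_n]}\}_{n\in\mathbb{N}}=\{\Phi(F_n)\}_{n\in\mathbb{N}}$ satisfies the LDP on $(\tilde{\mathscr{W}},\delta_\square)$ at speed $\ell(n)$ with rate function
\begin{equation}
J(\tilde h)=\inf_{F\in\mathcal{M}([0,1])\colon\Phi(F)=\tilde h} K(F)=\inf_{F\in\mathcal{M}([0,1])\colon\tilde g^{[F]}=\tilde h} K(F),
\end{equation}
as claimed in \eqref{RateLDPT}.

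\textbf{Where the work sits.} There is essentially no hard step: the two nontrivial inputs, namely the continuity of $F\mapsto g^{[F]}$ and the LDP for $F_n$, are exactly Assumptions \ref{ass:RG} and \ref{ass:LDPtypes}. The only point worth flagging is the goodness of $K$, which is not a separate assumption but comes for free from compactness of $\mathcal{M}([0,1])$; without this observation one could only invoke the inverse-contraction principle or the weaker form of the contraction principle, which would not immediately yield a bona fide LDP with the stated variational rate function.
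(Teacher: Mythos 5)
Your proof is correct and follows essentially the same route as the paper: bound $\delta_\square(\tilde g,\tilde f)$ by $\lVert g-f\rVert_{L_1}$, deduce continuity of $F\mapsto \tilde g^{[F]}$ from Assumption \ref{ass:RG}, and conclude via the contraction principle together with Assumption \ref{ass:LDPtypes}. Your extra remark that $K$ is automatically a good rate function by compactness of $\mathcal{M}([0,1])$ is a sound point the paper leaves implicit.
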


\begin{proof}
For any $g,f \in \mathscr{W}$, $\lVert f-g \rVert_{L_1} \geq \delta_\square(\tilde g, \tilde f)$. Thus, under Assumption \ref{ass:RG}, the map $F \mapsto \tilde g^{[F]}$ is continuous. The result therefore follows by using Assumption \ref{ass:LDPtypes} and the contraction principle \cite[Theorem III.20]{dH00}.
\end{proof}

In the remainder of this subsection we make use of the above auxiliary results to prove Theorem \ref{thm:LDPTP}.

\smallskip\noindent
\textit{Proof of Theorem \ref{thm:LDPTP}:} We prove (i), (ii), and (iii) separately. Recall that these correspond to the three cases $\lim_{n \to \infty}\ell(n)/{n \choose 2}=0$, $\lim_{n \to \infty}\ell(n)/{n \choose 2}=c$, and $\lim_{n \to \infty}\ell(n)/{n \choose 2}=\infty$ respectively. 

\smallskip\noindent
\underline{(i) lower bound:} 
Let $\mathcal{O}$ be an open subset of $\tilde{\mathscr{W}}$, and let $\mathcal{O}^{(-\varepsilon)}$ denote the largest open set whose $\varepsilon$-neighbourhood is contained in $\mathcal{O}$. We have 
\begin{equation}
\label{eqn:LBi}
\mathbb{P}( \tilde h^{G_n} \in \mathcal{O}) \geq \mathbb{P}( \tilde g^{[F_n]} \in \mathcal{O}^{(-\varepsilon)}) 
\big(1- \mathbb{P}( \delta_\square( \tilde h^{G_n}, \tilde g^{[F_n]})>\varepsilon)\big).
\end{equation}
Applying Lemmas \ref{Cor:Contr} and \ref{lem:EF} to the first and second terms on the right-hand-side of \eqref{eqn:LBi}, respectively, we obtain
\begin{equation}
\limsup_{n \to \infty} \frac{1}{\ell(n)} \log \mathbb{P}( \tilde h^{G_n} \in \mathcal{O}) 
\geq - \lim_{\varepsilon \downarrow 0} \inf_{\tilde h \in \mathcal{O}^{(-\varepsilon)}} J(\tilde h) 
= - \inf_{\tilde h \in \mathcal{O}} J(\tilde h).
\end{equation}

\medskip\noindent
\underline{(i) upper bound:} 
Let $\mathcal{C}$ be a closed subset of $\tilde{\mathscr{W}}$, and let $\mathcal{C}^{(+\varepsilon)}$ denote the largest closed set that contains the $\varepsilon$-neighbourhoods of all the points in $\mathcal{C}$. We have 
\begin{equation}
\label{eqn:UBi}
\mathbb{P}( \tilde h^{G_n} \in \mathcal{C}) \leq \mathbb{P}( \tilde g^{[F_n]} \in \mathcal{C}^{(+\varepsilon)}) 
+ \mathbb{P}( \delta_\square( \tilde h^{G_n}, \tilde g^{[F_n]})>\varepsilon).
\end{equation}
Again applying Lemmas \ref{Cor:Contr} and \ref{lem:EF} to the first and second terms on the right-hand-side of \eqref{eqn:LBi}, respectively, we obtain
\begin{equation}
\limsup_{n \to \infty} \frac{1}{\ell(n)} \log \mathbb{P}( \tilde h^{G_n} \in \mathcal{C}) 
\leq - \lim_{\varepsilon \downarrow 0} \inf_{\tilde h \in \mathcal{C}^{(+\varepsilon)}} J(\tilde h) 
= - \inf_{\tilde h \in \mathcal{C}} J(\tilde h),
\end{equation}
where in the first step we use the fact that $\ell(n) = o({n \choose 2})$.

\medskip\noindent
\underline{(ii) lower bound:} 
For $r \in \mathscr{W}$, let
\begin{equation}
F(r,\varepsilon) = \{F \in \mathcal{M}([0,1]): \lVert g^{[F]} -r \rVert_{L_1}< \varepsilon) \},
\end{equation} 
and observe that, by Assumption \ref{ass:RG}, the set $F(r, \varepsilon)$ is measurable. Under Assumption \ref{ass:AB} we therefore have 
\begin{equation}
\begin{aligned}
&\liminf_{n \to \infty} \frac{1}{{n \choose 2}} \log \mathbb{P}(\tilde h^{ G_n} \in \mathcal{O})\\ 
&\geq \lim_{\varepsilon \downarrow 0} \liminf_{n \to \infty} \frac{1}{{n \choose 2}} \bigg[ \log \mathbb{P}( F_n \in F(r,\varepsilon) )
+ \log \mathbb{P}( \tilde h^{\hat G_n} \in \mathcal{O}| F_n \in F(r,\varepsilon))\bigg] \\
&\geq -\lim_{\varepsilon \downarrow 0} \left[ c K(F(r,\varepsilon)) 
+ \sup_{F \in F(r,\varepsilon)} \inf_{\tilde h \in \mathcal{O}} I_{r^{[F]}}(\tilde h)  \right] \\
&= - [c J(\tilde r) + \inf_{\tilde h \in \mathcal{O}} I_r(\tilde h)].
\end{aligned}
\end{equation}
In the second inequality we use the fact that, since $\mathcal{M}([0,T])$ is compact, for any sequence $(F_n)_{n \in \mathbb{N}}$ in $F(r,\varepsilon)$ there exists a convergent subsequence, which allows us to apply Lemma~\ref{lem:gIHR}. In the final step we use the lower semi-continuity of $K$ and the assumption that $F \mapsto r^{[F]}$ is a continuous mapping from $\mathcal{M}([0,1])$ to $(\mathscr{W}, L_1)$, in combination with the fact that, under Assumption \ref{ass:AB}, if $\lVert r_n - r \rVert_{L_1} \to 0$, then $I_{r_n}(\tilde h) \to I_r(\tilde h)$ uniformly over $h \in \mathscr{W}$ as $n \to \infty$ (see \cite[Lemma 2.3]{DS19}). Because these arguments hold for any $r \in \mathscr{W}$ we can take the sharpest lower bound:
\begin{equation}
\liminf_{n \to \infty} \frac{1}{{n \choose 2}} \log \mathbb{P}(\tilde h^{G_n} \in \mathcal{O}) 
\geq - \inf_{\tilde r \in \tilde{\mathscr{W}}} [c J(\tilde r) + \inf_{\tilde h \in \mathcal{O}} I_r(\tilde h)] 
=- \inf_{\tilde h \in \mathcal{O}} \big\{\inf_{\tilde r \in \tilde{\mathscr{W}}} [ c J(\tilde r) + I_r(\tilde h) ] \big\}.
\end{equation}

\medskip\noindent
\underline{(ii) upper bound:} 
Let $L(\cdot, \cdot)$ be the L\'evy metric, let $B_L(F,\varepsilon) = \{ H \in \mathcal{M}([0,1])\colon\, L(H, F) \leq \varepsilon \}$, and recall that $L(\cdot, \cdot)$ metrises the weak topology (see \cite[Theorem~D.8]{DZ98}). Since $\mathcal{M}([0,1])$ is a compact space, for any $\varepsilon >0$ we can construct a finite set $F[\varepsilon]$ with the property that for any $H \in \mathcal{M}([0,1])$ there exists $F \in F[\varepsilon]$ such that $L(F,H) \leq \varepsilon$. We therefore have 
\begin{equation}
\begin{aligned}
\limsup_{n \to \infty} 
&\frac{1}{{n \choose 2}} \log \mathbb{P} (\tilde h^{\hat G_n} \in \mathcal{C}) \\
&\leq \lim_{\varepsilon \downarrow 0} \limsup_{n \to \infty} \frac{1}{{n \choose 2}} \log \bigg[ \sum_{F \in F[\varepsilon]}  
\mathbb{P}(F_n \in B_L(F,\varepsilon) )\mathbb{P}(\tilde h^{\hat G_n} \in \mathcal{C}\,|\, F_n \in B_L(F,\varepsilon))\bigg] \\
&\leq -\lim_{\varepsilon \downarrow 0} \min_{F \in F[\varepsilon]} [ c K(B_L(F,\varepsilon)) 
+ \inf_{F^\star \in B_L(F,\varepsilon)} \inf_{\tilde h \in \mathcal{C}} I_{g^{[F^\star]}}(\tilde h)]. \\
&\leq - \lim_{\varepsilon\to 0} \min_{F \in \mathcal{M}([0,1])} [c K(B_L(F,\varepsilon)) 
+ \inf_{F^\star \in B_L(F,\varepsilon)} \inf_{\tilde h \in \mathcal{C}} I_{g^{[F^\star]}}(\tilde h)]\\
&= - \min_{F \in \mathcal{M}([0,1])}  [c K(F) + \inf_{\tilde h \in \mathcal{C}} \tilde I_r^{[F]}(\tilde h)] \\
&= \inf_{\tilde h \in \mathcal{C}} \big\{ \min_{F \in \mathcal{M}([0,T])} [ c K(F) + \tilde I_r^{[F]}(\tilde h)] \big\}.
\end{aligned}
\end{equation}
In the second step we apply Assumption \ref{ass:LDPtypes}, Lemma \ref{lem:gIHR} and Laplace's method, using a similar justification as in the lower bound. In the fourth step we use lower semi-continuity of $K$ (Assumption \ref{ass:LDPtypes}) and apply \cite[Lemma 2.3]{DS19}).

\medskip\noindent
\underline{(iii):} 
In this case we can apply similar (albeit simpler) arguments as in case (i).
\qed


\subsection{Proofs of the results in Section \ref{sec:GVP}}
\label{sec:GVPpr}


\subsubsection{Large deviations}

The next lemma will be used to prove Proposition \ref{pr:point}.

\begin{lemma}
\label{lem:CMTFlem}
Subject to Assumption \ref{ass:CP}, $\tilde H(\cdot; \cdot, \cdot, F_n)$ satisfies the LDP with rate $\ell(n)$ and with rate function 
\begin{equation}
\label{eq:Jdef}
J(\tilde h) = \inf_{F \in \mathcal{M} \times [0,T] :  \tilde H(\cdot; \cdot, \cdot, F) = \tilde h} K(F).
\end{equation}
\end{lemma}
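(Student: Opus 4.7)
The plan is to prove this lemma by a direct application of the contraction principle, exactly paralleling the proof of the static counterpart Lemma \ref{Cor:Contr}. The only genuine content is checking that the relevant map is continuous in the Skorokhod setting; after that, the formula for $J(\tilde h)$ falls out automatically.

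First I would identify the random object under consideration. Reading \eqref{eq:Gtdef}, the path $\tilde H(\cdot;\cdot,\cdot,F_n)$ is nothing but $\tilde g^{[F_n]}$, the equivalence class under $\sim$ of the induced reference graphon path. Thus the lemma is the statement that $\{\tilde g^{[F_n]}\}_{n\in\mathbb N}$ obeys an LDP on $D((\tilde{\mathscr W},\delta_\square),[0,T])$ at rate $\ell(n)$ with rate function $J$.

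Next I would verify continuity of the map $F\mapsto \tilde g^{[F]}$ from $D(\mathcal M([0,1]),[0,T])$ into $D((\tilde{\mathscr W},\delta_\square),[0,T])$. By Assumption \ref{ass:CP} the map $F\mapsto g^{[F]}$ is continuous when the target space carries the $\lVert\cdot\rVert_{L_1}$-topology pointwise in $t$. For any two graphons $h_1,h_2\in\mathscr W$ one has
\begin{equation}
\delta_\square(\tilde h_1,\tilde h_2)\;\le\;d_\square(h_1,h_2)\;\le\;\lVert h_1-h_2\rVert_{L_1},
\end{equation}
the first inequality because $\delta_\square$ is the infimum of $d_\square$ over measure-preserving rearrangements, and the second by definition of $d_\square$. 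Applied pointwise in $t$, this transfers convergence in $D((\mathscr W,\lVert\cdot\rVert_{L_1}),[0,T])$ to convergence in $D((\tilde{\mathscr W},\delta_\square),[0,T])$, so the composed map $F\mapsto \tilde g^{[F]}$ is continuous. (Since the Skorokhod topology on c\`adl\`ag paths only compares values and jump times, continuity lifts in a purely abstract way from continuity of the pointwise target map coupled with continuity of the path map already granted by Assumption \ref{ass:CP}.)

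Finally, Assumption \ref{ass:DP} gives the LDP for $\{F_n\}$ on $D(\mathcal M([0,1]),[0,T])$ at rate $\ell(n)$ with rate function $K$. With the continuous map in hand, the contraction principle \cite[Theorem III.20]{dH00} yields the LDP for $\{\tilde g^{[F_n]}\}$ at rate $\ell(n)$ with rate function
\begin{equation}
J(\tilde h)\;=\;\inf_{F\in D(\mathcal M([0,1]),[0,T])\,:\,\tilde g^{[F]}=\tilde h}K(F),
\end{equation}
which is exactly \eqref{eq:Jdef}. The only mild obstacle is the Skorokhod-level continuity argument, but this is essentially forced by Assumption \ref{ass:CP} together with the standard inequality above; beyond that, the proof is a mechanical replay of Lemma \ref{Cor:Contr}.
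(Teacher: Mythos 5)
Your proposal is correct and follows essentially the same route as the paper, whose proof is precisely a contraction-principle argument parallel to Lemma \ref{Cor:Contr}; your continuity check via $\delta_\square(\tilde h_1,\tilde h_2)\le d_\square(h_1,h_2)\le\lVert h_1-h_2\rVert_{L_1}$ is exactly the content the paper leaves implicit. You also rightly make explicit that the LDP input comes from Assumption \ref{ass:DP}, which the lemma's statement tacitly assumes.
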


\begin{proof}
The claim follows from the contraction principle (cf.\ Lemma \ref{Cor:Contr}).
\end{proof}

\medskip\noindent
\textit{Proof of Proposition \ref{pr:point}.} 
To establish a multi-point LDP, we can follow similar arguments as in the proof of Theorem \ref{thm:LDPTP}.(i). For example, to establish the lower bound, pick $0\leq t_1 \leq t_2 \leq \dots \leq t_k \leq T$,  let $\mathcal{O}_i$ be an open subset of $\tilde{\mathscr{W}}$, and let $\mathcal{O}^{(-\varepsilon)}_i$ be as in the proof of Theorem \ref{thm:LDPTP}. We have 
\begin{equation}
\begin{aligned}
&\liminf_{n \to \infty} \frac{1}{\ell(n)} \log \mathbb{P} \left( \tilde h^{G_n(t_i)} \in \mathcal{O}_i, \, \forall i =1,\dots,k \right) \\
&\geq \lim_{\varepsilon \downarrow 0}  \liminf_{n \to \infty} \frac{1}{\ell(n)} 
\log \bigg[  \mathbb{P} \left( \tilde g^{[F_n]}(t_i) \in \mathcal{O}^{(-\varepsilon)}_i, \, \forall i =1,\dots,k \right) \\
&\qquad \qquad \qquad \qquad \qquad
+ \bigg(1 - \sum_{i=1}^k \mathbb{P}(\delta_\square(\tilde{h}^{G_n(t_i)}, \tilde{g}^{[F_n]}(t_i)) > \varepsilon \bigg) \bigg]  \\
&\geq \inf_{\tilde h: \tilde h(t_i) \in \mathcal{O}_i, \, \forall i=1,\dots, k} J(\tilde h),  
\end{aligned}
\end{equation}
where $J(\tilde h)$ is given by \eqref{eq:Jdef}, and we use a similar justification as in the lower bound of Theorem \ref{thm:LDPTP}.(i) (now applying Lemma \ref{lem:CMTFlem} where we used to apply Lemma \ref{Cor:Contr}). The upper bound is again similar and is therefore omitted. With the multi-point LDP established, we can apply the Dawson-G\"{a}rtner  projective limit theorem \cite[Theorem 4.6.1]{DZ98} to establish an LDP in the pointwise topology, and \cite[Lemma 4.6.5]{DZ98} to obtain the specific form of the rate function in Proposition \ref{pr:point}. 
\qed

\medskip\noindent
\textit{Proof of Proposition \ref{lem:TC}.}
For $\delta>0$ and $T>0$, define the modulus of continuity in $D(\tilde{\mathscr{W}},[0,T])$ by
\begin{equation}
w'(\tilde h^{G_n},\delta,T) = \inf_{t_i} \max_i \sup_{s,t\in[t_{i},t_{i+1})} 
\delta_\square\left(\tilde h^{ G_n(s)}, \tilde h^{G_n(t)} \right),
\end{equation}
where the infimum is over $\{t_i\}$ satisfying 
\begin{equation}
0=t_0 <t_1 < \dots < t_{m-1} <T \leq t_m
\end{equation}
and $\min_{1 \leq i \leq n} (t_i-t_{i-1})\geq \delta$. By \cite[Theorem 4.1]{FK06} (in combination with the compactness of $\tilde{\mathscr{W}}$), the sequence of processes $\{(h^{ G_n(t)})_{t \in [0,T]}\}_{n \in \mathbb{N}}$ is exponentially tight if 
\begin{equation}
\label{eq:Tgoal}
\lim_{\delta \downarrow 0} \limsup_{n \to \infty} \frac{1}{\ell(n)} \log \mathbb{P} \left( w'(\tilde h^{G_n},\delta,T) > \varepsilon \right) = -\infty
\end{equation}
for all $\varepsilon >0$. Suppose that \eqref{eq:Tcon} holds, with $C_n(t,\delta)$ given by \eqref{cndef}. We will show that this entails that \eqref{eq:Tgoal} holds with $t_i= i \delta$ for $i \in \{0, \dots, \lceil {T}/{\delta} \rceil \}$. Indeed, observe that 
\begin{equation}
\sup_{s,t \in [t_i, t_{i+1})} \delta_{\square} \left(\tilde h^{G_n(s)}, \tilde h^{ G_n(t)} \right) 
\leq \sup_{s,t \in [t_i, t_{i+1})} \lVert  h^{ G_n(s)} -  h^{G_n(t)} \rVert_{L_1} \leq {n \choose 2}^{-1} C_n (t_i, \delta).
\end{equation}
Consequently, 
\begin{equation}
\label{eq:wdid}
\mathbb{P} \left( w'(\tilde h^{G_n}, \delta, T) > \varepsilon \right) \leq \lceil T/\delta \rceil  
\sup_{t \in [0,T]} \mathbb{P} \left(C_n (t,\delta) > \varepsilon {n \choose 2} \right). 
\end{equation}
Hence 
\begin{align}
\begin{split}
\lim_{\delta \downarrow 0} 
&\limsup_{n \to \infty} \frac{1}{\ell(n)} \log \mathbb{P} \left( w'(\tilde h^{G_n},\delta,T) > \varepsilon \right) \\
&\leq \lim_{\delta \downarrow 0} \limsup_{n \to \infty} \left[\frac{1}{\ell(n)} \log  \lceil T/\delta \rceil 
+ \frac{1}{\ell(n)} \log \left( \sup_{t \in [0,T]} \mathbb{P} \left(C_n (t,\delta) > \varepsilon {n \choose 2} \right) \right)\right]\\
&= -\infty,
\end{split}
\end{align}
where in the final step we apply \eqref{eq:Tcon} and use the fact that $\ell(n) \to \infty$.
\qed


\subsubsection{Weak convergence} 

The next lemma is needed in the proofs of Proposition \ref{prop:fdd} and Theorem \ref{thm:CD}.

\begin{lemma}
\label{lem:CM}
Subject to Assumptions \ref{ass:CP} and \ref{ass:CD}, $g^{[F_n]} \Rightarrow g^{[F]}$ on $D((\mathscr{W}, d_\square), [0,T])$.
\end{lemma}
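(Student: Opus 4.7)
The plan is to reduce this to a direct application of the continuous mapping theorem, applied in two steps corresponding to the two available hypotheses.

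First I would invoke Assumption \ref{ass:CP}, which gives that the deterministic map $\Phi\colon F \mapsto g^{[F]}$ is continuous from $D(\mathcal{M}([0,1]),[0,T])$ to $D((\mathscr{W},\|\cdot\|_{L_1}),[0,T])$. Combining this with Assumption \ref{ass:CD} ($F_n \Rightarrow F$ on $D(\mathcal{M}([0,1]),[0,T])$) and the continuous mapping theorem, we obtain
\begin{equation}
g^{[F_n]} = \Phi(F_n) \Rightarrow \Phi(F) = g^{[F]} \quad \text{on } D((\mathscr{W},\|\cdot\|_{L_1}),[0,T]).
\end{equation}

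Next I would weaken the topology from $L_1$ to cut distance. The elementary estimate $d_\square(h_1,h_2)\leq \|h_1-h_2\|_{L_1}$ (a direct consequence of \eqref{cutdist}) shows that the identity map $\iota\colon (\mathscr{W},\|\cdot\|_{L_1}) \to (\mathscr{W},d_\square)$ is Lipschitz, hence continuous. This induces a continuous map $\bar\iota\colon D((\mathscr{W},\|\cdot\|_{L_1}),[0,T])\to D((\mathscr{W},d_\square),[0,T])$ defined by pointwise composition: if $h_n\to h$ in the Skorokhod $J_1$ topology with respect to $\|\cdot\|_{L_1}$, then there exist time changes $\lambda_n\to \mathrm{id}$ uniformly with $\|h_n\circ\lambda_n - h\|_\infty^{L_1}\to 0$, and the Lipschitz bound above immediately gives $\|h_n\circ\lambda_n - h\|_\infty^{d_\square}\to 0$, i.e.\ $\bar\iota(h_n)\to \bar\iota(h)$ in $D((\mathscr{W},d_\square),[0,T])$. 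A second application of the continuous mapping theorem then yields
\begin{equation}
g^{[F_n]}=\bar\iota(g^{[F_n]})\Rightarrow \bar\iota(g^{[F]})=g^{[F]} \quad \text{on } D((\mathscr{W},d_\square),[0,T]),
\end{equation}
which is the desired conclusion.

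I do not anticipate a real obstacle; the lemma is essentially a bookkeeping statement about which topologies the continuous mapping theorem is being invoked in. The only mildly delicate point is verifying that continuity of the state-space identity $(\mathscr{W},\|\cdot\|_{L_1})\to(\mathscr{W},d_\square)$ lifts to continuity at the level of Skorokhod path spaces, which is standard but worth recording: the time-change characterization of convergence in $D$ allows the Lipschitz estimate $d_\square\leq \|\cdot\|_{L_1}$ to be applied uniformly in $t$, with no loss.
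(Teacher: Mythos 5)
Your argument is correct and follows the same route as the paper: apply the continuous mapping theorem with Assumptions \ref{ass:CP} and \ref{ass:CD} to get convergence in $D((\mathscr{W},\lVert\cdot\rVert_{L_1}),[0,T])$, then pass to the weaker cut-distance topology via $d_\square\leq\lVert\cdot\rVert_{L_1}$. Your explicit verification that the topology-weakening lifts to the Skorokhod path space is a detail the paper leaves implicit, but the substance is identical.
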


\begin{proof}
By Assumptions \ref{ass:CP} and \ref{ass:CD}, we can apply the continuous mapping theorem to establish that
\begin{equation}
g^{[F_n]} \Rightarrow g^{[F]} \quad \text{on } D((\mathscr{W}, \lVert \cdot \rVert_{L_1}), [0,T]).
\end{equation}
Because $(\mathscr{W}, \lVert \cdot \rVert_{L_1})$ is a stronger topology than $(\mathscr{W}, d_{\square})$, this implies the claim.
\end{proof}

\medskip\noindent
\textit{Proof of Proposition \ref{prop:fdd}.} 
By Lemma \ref{lem:CM}, we have $g^{[F_n]} \stackrel{\rm fdd}{\Rightarrow} g^{[F]}$ on $(\mathscr{W}, d_\square)$. From \eqref{Ob:spIRG}, Assumption \ref{ass:VL}, and the uniform bound in \cite[Lemma 5.11]{C17} we know that, for any $t \in [0,T]$, 
\begin{equation}
\lim_{n \to \infty} \mathbb{P}(d_\square( h^{G_n(t)} ,g^{[F_n]}(t)) \leq  \varepsilon) = 1,
\end{equation}
which implies that 
\begin{equation}
\lim_{n \to \infty} \mathbb{P}( h^{G_n(t_i)} \in \mathbb{B}_\square(g^{[F_n]}(t_i), \varepsilon), \, \forall i) = 1.
\end{equation}
The claim therefore follows from \cite[Corollary 3.3]{EK09}.
\qed

\medskip\noindent
\emph{Proof of Theorem \ref{thm:CD}.}
By Lemma \ref{lem:CM} and \cite[Corollary 3.3]{EK09}, it suffices to prove that 
\begin{equation}
d(g^{[F_n]}(\cdot), h^{G_n(\cdot)}) \to 0, \qquad \text{with probability 1},
\end{equation}
where $d(\cdot, \cdot)$ is a metric that generates $D(\mathscr{W}, [0,T])$ (see \cite[Ch. 3.5]{EK09} for an expression for $d(\cdot, \cdot)$). Define $g_\delta^{[F_n]}$ such that, $g_\delta^{[F_n]}(0)=g^{[F_n]}(0)$ and
\begin{equation}
g^{[F_n]}_\delta(t) = g^{[F_n]}(\delta i), \qquad \text{for } t \in (\delta i, \delta(i+1)], \; \; i=1, \dots \lfloor T/\delta \rfloor.
\end{equation}
Because, for any $\delta>0$,
\begin{equation}
\begin{aligned}
d(g^{[F_n]}(\cdot), h^{G_n(\cdot)}) &\leq d\left(g^{[F_n]}(\cdot),g_\delta^{[F_n]}(\cdot)\right) 
+ d\left(g_\delta^{[F_n]}(\cdot), h^{G_n(\cdot)}\right)\\ 
&\leq d\left(g^{[F_n]}(\cdot),g_\delta^{[F_n]}(\cdot)\right) 
+ \sup_{t \in [0,T]} d_\square\left(g_\delta^{[F_n]}(t), h^{G_n(t)}\right) 
\end{aligned}
\end{equation}
it suffices to show that 
\begin{equation}
\label{eqn:NGl}
\lim_{\delta \downarrow 0} \lim_{n \to \infty} \left[d\left(g^{[F_n]}(\cdot),g_\delta^{[F_n]}(\cdot)\right) 
+ \sup_{t \in [0,T]} d_\square\left(g_\delta^{[F_n]}(t), h^{G_n(t)}\right)  \right] = 0
\end{equation}
with probability 1. We first deal with the term $\lim_{\delta \downarrow 0} \lim_{n \to \infty} \sup_{t \in [0,T]} d_\square(g_\delta^{[F_n]}(t), h^{G_n(t)})$. By the same arguments as in the proof of Proposition \ref{prop:fdd}, for any $\delta, \varepsilon>0$,
\begin{equation}
\label{eqn:CLLN}
\lim_{n \to \infty} \mathbb{P} \left(h^{G_n(\delta i)} \in \mathbb{B}_\square (g^{[F_n]}(\delta i), \varepsilon) 
\,\,\forall\,\ i = 0,1,\dots , \lfloor T/\delta \rfloor \right)  = 1.
\end{equation}
Thus,
\begin{equation}
\begin{aligned}
\lim_{\delta \downarrow 0} 
&\lim_{n \to \infty} \mathbb{P}\left(\sup_{t \in [0,T]} d_\square\left(g_\delta^{[F_n]}(t), h^{G_n(t)}\right) > 2 \varepsilon \right) \\
&\leq\lim_{\delta \downarrow 0} \lim_{n \to \infty} \sum^{T/\delta}_{i=1} 
\left\{ \mathbb{P}\left(d_\square\left(g_\delta^{[F_n]}(\delta i), h^{G_n(\delta i)}\right) > \varepsilon \right) 
+ \mathbb{P} \left( C_n(t, \delta) > \varepsilon {n \choose 2}\right) \right\} \\
&= 0,
\end{aligned}
\end{equation}
where in the final step we apply \eqref{eqn:CLLN} in combination with Assumption \ref{ass:FLT}. The fact that, with probability 1,
\[
\lim_{\delta \downarrow 0} \lim_{n \to \infty}d\left(g^{[F_n]}(\cdot),g_\delta^{[F_n]}(\cdot)\right)=0
\]
follows because, due to Assumptions \ref{ass:CD} and \ref{ass:CP}, the limiting object $g^{[F]}$ of $g^{[F_n]}$ is a random variable on $D(\mathscr{W}, [0,T])$ (i.e., its trajectories are c\`adl\`ag paths). 
\qed


\subsection{Proofs of the results in Section \ref{sec:ExSD}}
\label{sec:ExSDpr}


\subsubsection{Proofs of the results in Section \ref{Sec:APP1}}

To prove Theorem \ref{thm:CD}, we construct a graphon-valued process, $(\tilde h^{ G^*_n(t)})_{t \geq 0}$, that mimics the behaviour of $(\tilde h^{G_n(t)})_{t \geq 0}$ while still falling into the framework of Section \ref{sec:GVP}. We couple the two processes and demonstrate that, under the coupling, the probability that the two processes deviate from each other significantly (in a way defined below) is $o(\eee^{-n(1+o(1))})$.

\smallskip\noindent
\textit{Constructing a mimicking process:}
Suppose that the process $(G^*_n(t))_{t \geq 0}$ is characterised by the following dynamics:
\begin{itemize}
\item $G^*_n(0)$ is the empty graph.
\item Each vertex $v$ is assigned an independent rate-$\gamma$ Poisson clock. Each time the clock rings, all the edges that are adjacent to $v$ become inactive.
\item 
If edge $ij$ is inactive, then it becomes active at rate 
$$
\lambda\left(t,Y_i(t),Y_j(t),F_n(t; \cdot), \tilde g^{[F_n]}(t; \cdot, \cdot)\right).
$$
\item 
If edge $ij$ is active, then it becomes inactive at rate 
$$
\mu\left(t,Y_i(t),Y_j(t),F_n(t; \cdot), \tilde g^{[F_n]}(t; \cdot, \cdot)\right).
$$
\end{itemize}
Here, $g^{[\cdot]}(t; \cdot, \cdot)$ is defined in \eqref{eq:GFdef}. We point out that the induced reference graphon process of $(G^*(t))_{t \geq 0}$ is indeed $g^{[F]}$. Note that the only difference between the transition rates of $(G_n(t))_{t \geq 0}$ and $(G^*_n(t))_{t \geq 0}$ is that in the transition rate functions $\lambda(\cdot)$ and $\mu(\cdot)$ we have replaced $\tilde h^{G_n(t)}$ by $\tilde g^{[F_n]}(t; \cdot, \cdot))$.

Theorem \ref{thm:CD} follows by verifying that we can apply Theorem \ref{thm:LDPmain} to $\{(G^*_n(t))_{t \geq 0}\}_{n \in \mathbb{N}}$, and using the following lemma.

\begin{lemma}
\label{lem:couple}
There exists a coupling of $\{(G_n(t))_{t \geq 0}\}_{n \in \mathbb{N}}$ and $\{(G^*_n(t))_{t \geq 0}\}_{n \in \mathbb{N}}$ such that
\begin{equation}
\lim_{n \to \infty} \frac{1}{n} \log \mathbb{P} \left(\lVert\tilde h^{G_n(t)} 
- \tilde h^{ G^*_n(t)} \rVert_{L_1} > \eta, \, \text{for some } t\in [0,T] \right) = -\infty.
\end{equation}
\end{lemma}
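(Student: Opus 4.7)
I plan to construct a coupling of $\{(G_n(t))_{t\in[0,T]}\}_{n\in\nn}$ and $\{(G^*_n(t))_{t\in[0,T]}\}_{n\in\nn}$ in which both processes share the Poisson clocks driving the vertex-age process (so that the empirical type distributions $F_n$ coincide exactly across the two systems), and share a common family of edge-level Poisson processes, each of rate $\Lambda\geq\|\lambda\|_\infty+\|\mu\|_\infty$, equipped with i.i.d.\ $\mathrm{Unif}[0,1]$ marks. Both processes decide their flips from these marks via the standard thinning recipe; the only difference is that $G_n$ evaluates $\lambda,\mu$ at $\tilde h^{G_n(t^-)}$ while $G^*_n$ evaluates them at $\tilde g^{[F_n]}(t^-)$. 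Under this coupling, $G_n(0)=G^*_n(0)$ (both are empty), and $\lVert h^{G_n(t)}-h^{G^*_n(t)}\rVert_{L_1}=D_n(t)/\binom{n}{2}$ where $D_n(t):=\#\{i<j:E^{G_n}_{ij}(t)\neq E^{G^*_n}_{ij}(t)\}$.

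Set $\varepsilon_n(t):=\delta_\square(\tilde h^{G^*_n(t)},\tilde g^{[F_n]}(t))$. By the Lipschitz continuity of $\lambda,\mu$ in the $\tilde{\mathscr{W}}$-argument, the conditional probability of creating a new disagreement on edge $ij$ at a Poisson mark is bounded by $L\delta_\square(\tilde h^{G_n(t^-)},\tilde g^{[F_n]}(t^-))/\Lambda$. Triangle inequality plus $\delta_\square\leq d_\square\leq \lVert\cdot\rVert_{L_1}$ gives $\delta_\square(\tilde h^{G_n(t)},\tilde g^{[F_n]}(t))\leq D_n(t)/\binom{n}{2}+\varepsilon_n(t)$, so that aggregating over all edges yields the self-referential drift bound
\begin{equation}
\label{eq:driftbd}
\text{(instantaneous rate of }{+}1\text{ jumps of }D_n(t))\leq L\bigl(D_n(t)+\varepsilon_n(t)\tbinom{n}{2}\bigr).
\end{equation}

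The key probabilistic input is controlling $\sup_{t\in[0,T]}\varepsilon_n(t)$. Because $G^*_n$ fits exactly into the framework of Section~\ref{sec:GVP} (its transitions do not depend on its own graph state), the conditional identity \eqref{Ob:spIRG} holds pointwise, and Lemma~\ref{lem:EF} gives $\pp(\varepsilon_n(t)>\varepsilon)\leq\exp\{-(1-o(1))\varepsilon^2\binom{n}{2}\}$, which is $o(\eee^{-cn})$ for every $c>0$. A mesh $\{k\delta\}_{k=0}^{\lceil T/\delta\rceil}$ combined with a direct Poisson-tail bound on the total number of edge flips of $G^*_n$ in a window of length $\delta$ (whose expectation is bounded by $2\Lambda\delta\binom{n}{2}$ by construction) upgrades the pointwise bound to $\pp(\sup_{t\in[0,T]}\varepsilon_n(t)>\varepsilon)=o(\eee^{-cn})$ for every $c>0$.

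The remaining step is to pass from \eqref{eq:driftbd} to concentration of $\sup_{t\leq T}D_n(t)/\binom{n}{2}$. To sidestep the self-reference, I introduce the stopping time $\tau_\eta:=\inf\{t:D_n(t)>\eta\binom{n}{2}\}$ and work on the event $A_\varepsilon:=\{\sup_{t\leq T}\varepsilon_n(t)\leq\varepsilon\}$. Applying Dynkin's formula to $\eee^{\theta D_n(t\wedge\tau_\eta)}$ together with \eqref{eq:driftbd} gives on $A_\varepsilon$ the bound $\ee[\eee^{\theta D_n(t\wedge\tau_\eta)}]\leq\exp\{L(\eta+\varepsilon)\binom{n}{2}(\eee^\theta-1)t\}$ for $t$ up to any subinterval length $<1/L$; iterating this estimate over $\lceil LT\rceil+1$ pieces, starting from $D_n(0)=0$ and choosing $\varepsilon$ sufficiently small relative to $\eta$, Markov's inequality yields $\pp(\{\tau_\eta\leq T\}\cap A_\varepsilon)=o(\eee^{-cn})$ for every $c>0$. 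Combined with $\pp(A_\varepsilon^c)=o(\eee^{-cn})$ from the previous paragraph, this proves the lemma. The main obstacle is precisely the self-referential nature of \eqref{eq:driftbd}; the stopping-time/iteration scheme — with $\varepsilon$ supplied by the independent concentration of $\varepsilon_n$ — is what makes a clean Gronwall-type closure possible.
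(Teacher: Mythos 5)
Your proposal is correct and follows essentially the same route as the paper's proof: the coupling (shared vertex clocks plus shared marked edge-level Poisson clocks with thinning) is the one used there, the Lipschitz-plus-triangle-inequality bound giving a disagreement birth rate of order $L\bigl(D_n(t)+\varepsilon_n(t)\tbinom{n}{2}\bigr)$ is exactly the paper's majorization step, and the reliance on superexponential (rate $\tbinom{n}{2}$) concentration of $\varepsilon_n(t)=\delta_\square(\tilde h^{G^*_n(t)},\tilde g^{[F_n]}(t))$, uniformly in $t$ via a mesh and a flip-count bound, matches the paper's treatment of its term \eqref{eq:CPL} (note only that the flips caused by vertex-clock rings, of expected order $\gamma\delta n^2$, must be counted alongside the edge-clock flips). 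The single divergence is the last step: the paper stochastically dominates the disagreement count by an explicit Yule process with immigration (immigration rate $\propto\beta n^2$, per-individual birth rate constant), computes its moment generating function from the Yule law and applies a Chernoff bound, choosing $\beta$ small relative to $\eta$; you instead use a stopping time $\tau_\eta$ and Poisson domination on subintervals, and as literally written this does not close — with a single threshold $\eta$ on every piece the Chernoff exponent is positive only when $L(\eta+\varepsilon)t<\eta$, which fails for $t$ beyond order $1/L$ no matter how small $\varepsilon$ is, and on later pieces the process no longer starts from $0$ — so you need nested thresholds $\eta_1<\dots<\eta_K=\eta$ (a discrete Gronwall scheme with $\varepsilon$ chosen small compared to $\eta e^{-cLT}$), or simply the paper's direct Yule-with-immigration domination; this is a fixable technical looseness rather than a flaw in the approach.
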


\begin{proof} 
The claim is proved in three steps.

\medskip\noindent
\emph{Step 1: description of the coupling.}  
Let $C_{\rm max}$ be the maximal value that the rates $\lambda(\cdot)$ and $\mu(\cdot)$ can take, i.e.,
\begin{equation}
C_{\rm max} = \max_{t \in [0,T], u,v\in[0,1], F \in \mathcal{M}([0,1]), \tilde h \in \tilde{\mathscr{W}}} 
\lambda(t, u,v,F,\tilde h) \vee \mu(t, u,v,F,\tilde h),
\end{equation}
and observe that $C_{\rm max} < \infty$ because $\lambda(\cdot)$ and $\mu(\cdot)$ are Lipshitz continuous functions with a compact domain. Suppose that outcomes of $(G_n(t))_{t \geq 0}$ and $(G^*_n(t))_{t \geq 0}$ are generated in the following manner.
\begin{itemize}
\item 
For each $i \in [n]$, vertex $i$ is assigned the same (coupled) rate-$\gamma$ Poisson clock in both processes, so that if the clock associated with vertex $i$ rings in $(G_n(t))_{t \geq 0}$ at time $s$, then the clock associated with vertex $i$ also rings in $(G^*_n(t))_{t \geq 0}$ at time $s$ (and vice-versa). When the clock associated to vertex $i$ then all edges adjacent to vertex~$i$ become inactive (in both $(G_n(t))_{t \geq 0}$ and $(G^*_n(t))_{t \geq 0}$ simultaneously).
\item 
Assign each edge the same (coupled) Poisson rate-$C_{\rm max}$ clock in both processes. When the Poisson clock associated with edge $ij$ rings, generate an outcome $u$ of a Unif(0,1) distribution.

\begin{itemize}
\item if $u \leq \lambda(t,Y_i(t),Y_j(t),F_{n},\tilde h^{G_n(t)})/C_{\rm max}$, then
the edge $ij$ becomes active in $( G_n(t))_{t \geq 0}$.
\item if $u \leq \lambda(t,Y_i(t),Y_j(t),F_{n},\tilde g^{[F_n]}(t; \cdot, \cdot))/C_{\rm max}$, then the
edge $ij$ becomes active in $( G^*_n(t))_{t \geq 0}$. 
\end{itemize}
(If it was already active, then it remains active.)
\item 
Assign each edge a second (coupled) Poisson rate-$C_{\rm max}$ clock in both processes. When the Poisson clock associated with edge $ij$ rings, generate an outcome $u$ of a Unif(0,1) distribution.
\begin{itemize}
\item
if $u \leq \mu(t,Y_i(t),Y_j(t),F_{n},\tilde h^{G_n(t)})/C_{\rm max}$, then the
edge $ij$ becomes inactive in $( G_n(t))_{t \geq 0}$.
\item 
if $u \leq \mu(t,Y_i(t),Y_j(t),F_{n},\tilde g^{[F_n]}(t; \cdot, \cdot))/C_{\rm max}$, then the
edge $ij$ becomes inactive in $(G^*_n(t))_{t \geq 0}$.
\end{itemize}
(If it was already inactive, then it remains inactive.)
\end{itemize}

\medskip\noindent
\emph{Step 2: majorization of the $L_1$ distance.} 
Observe that if edge $ij$ is inactive in both models and the clock associated with edge $ij$ rings, then a \emph{difference} is formed (i.e., edge $ij$ is active in one process and inactive in the other) with probability 
\begin{equation}
\frac{1}{C_{\rm max}}\,\Big| \lambda\left(t,X_i(t),X_j(t),F_{n}(t; \cdot),\tilde h^{G_n(t)}\right) - \lambda\left(t,X_i(t),X_j(t),F_{n}(t; \cdot),\tilde g^{[F_n]}(t; \cdot, \cdot)\right)\Big|.
\end{equation}
At any time $t$, by the Lipshitz continuity of $\lambda(\cdot)$,
\begin{align}
\begin{split}\label{eqn:rBg}
&\left| \lambda\left(t,X_i(t),X_j(t),F_{n}(t; \cdot),\tilde h^{G_n(t)}\right) - \lambda\left(t,X_i(t),X_j(t),F_{n}(t; \cdot),\tilde g^{[F_n]}(t; \cdot, \cdot)\right)\right| \\
&\qquad \leq c \left[ \delta_\square(\tilde g^{(F_n)}(t; \cdot, \cdot), \tilde h^{G^*_n(t)}) 
+ \delta_\square( \tilde h^{G^*_n}(t),\tilde h^{G_n(t)}) \right] \\
&\qquad \leq c\left[ \delta_\square(\tilde g^{(F_n)}(t; \cdot, \cdot), \tilde h^{G^*_n(t)}) 
+  \left\lVert h^{G^*_n(t)} - h^{ G_n(t)} \right\rVert_{L_1} \right],
\end{split}
\end{align}
where $c$ is the Lipshitz constant. Observe that an equivalent bound holds when $\lambda(\cdot)$ is replaced by $\mu(\cdot)$.

Let $D_i(t)$ denote the number of differences between $G^*(t)$ and $G(t)$, so that \begin{equation}D_i(t) = \frac{n^2}{2} \left\lVert h^{G^*_n(t)}- h^{G_n(t)} \right\rVert_{L_1}.\end{equation}
Using a standard property of the superposition of Poisson processes and \eqref{eqn:rBg}, we see that, at any time $t$, $D_n(t)$ increases by 1 at a rate that is bounded above by 
\begin{equation}
{n^2} C_{\rm max}\,c\left[ \delta_\square(\tilde g^{(F_n)}(t; \cdot, \cdot), \tilde h^{G^*_n(t)}) +  \left\lVert h^{G^*_n(t)} - h^{ G_n(t)} \right\rVert_{L_1}\right].
\end{equation}
In addition, observe that if a clock associated to a vertex rings it can only decrease the number of differences because then, in both processes, all the edges adjacent to the vertex are inactive. Combining the above, we see that, for any $\beta>0$, $D_n(t)$ is stochastically dominated by the random quantity
\begin{equation}
Z_n^{(\beta)}(t) + n^2 \mathbbm{1}\left\{\exists s \leq t : \delta_\square\left(\tilde g^{(F_n)}(s; \cdot, \cdot), 
\tilde h^{G^*_n(s)}\right) > \beta\right\},
\end{equation}
where $(Z^{(\beta)}_n(t))_{t \geq 0}$ is a pure-birth process, with initial state $Z^{(\beta)}_n(0)=0$, characterized by the transition rate
\begin{equation}
\label{eq:dp}
C_{\rm max}c \left[ \beta n^2 + 2 i \right].
\end{equation}
from state $i$ to state $i+1$.
Consequently, for any $\beta>0$ we have
\begin{equation}
\label{eqn:rBB}
\begin{aligned}
\mathbb{P}& \left(\left\lVert\tilde h^{G_n(t)} 
- \tilde h^{ G^*_n(t)} \right\rVert_{L_1} > \eta, \, \text{for some } t\in [0,T] \right) \\
&\leq  \mathbb{P}(Z^{(\beta)}_n(T) \leq \eta n^2/2 ) + \mathbb{P} \left(\delta_\square\left(\tilde g^{(F_n)}(t; \cdot, \cdot), 
\tilde h^{G^*_n(t)}\right) > \beta, \, \text{for some } t\in [0,T] \right),
\end{aligned}
\end{equation}
where we have used the fact that $(Z^{(\beta)}_n(t))_{t \geq 0}$ is an increasing process.

\medskip\noindent
\emph{Step 3: bounding the dominating process.} 
So as to bound the second term on the right-hand-side of \eqref{eqn:rBB}, we observe that, for any $\beta>0$, 
\begin{equation}\label{eq:CPL}
\limsup_{n \to \infty} \frac{1}{n}\log \mathbb{P} \left(\delta_\square\left(\tilde g^{(F_n)}(t; \cdot, \cdot), 
\tilde h^{G^*_n(t)}\right) > \beta, \, \text{for some } t\in [0,T] \right) = - \infty.
\end{equation}
We can establish \eqref{eq:CPL} following similar arguments as in the proof of Theorem \ref{thm:CD}, and hence these arguments are omitted.

For ease of notation below we write $Z_n(t)$ to denote $Z_n^{(\beta)}(t)$. It suffices to show that, for any $\eta >0$, there exists $\beta$ sufficiently small so that 
\begin{equation}
\limsup_{n \to \infty} \frac{1}{n^2/2} \log \mathbb{P}\left( Z_n (T) > \eta n^2/2  \right) \geq - C(\beta,\eta) >0
\end{equation}
for some $C(\beta,\eta)>0$. Observe that the Markov chain $(Z_n(t))_{t \geq 0}$ described by the transition rates \eqref{eq:dp} is a continuous-time branching process with immigration. The initial population size is 0, immigrants arrive at rate $c\,C_{\rm max}\,\beta n^2$, while individuals in the population give birth at rate $2 c\,C_{\rm max}$ and die at rate $0$. Let $X(t)$ denote the number of descendants that are alive at time $T$ of an individual that immigrated to the population at time $t<T$. Let $C^*:=2c\,C_{\rm max}$. It was shown by Yule (cf.\ \cite[Chapter V.8]{H63}) that 
\begin{equation}
\mathbb{P}(X(t)=i) = \eee^{-C^*(T-t)} (1- \eee^{-C^*(T-t)})^{i -1}, \quad i \in \mathbb{N},
\end{equation}
i.e., $X(t) -1$ has a geometric distribution with success probability $\eee^{-C^*(T-t)}$. Note that (since the death rate of individuals is zero) $X_0 $ stochastically dominates $X_t$ for all $t \geq 0$. In addition, the total number of immigrants has a Poisson distribution with mean $C^* \beta Tn^2/2$. Thus, if $\{ X^{(k,\ell)}_0 \}_{k,\ell \in \mathbb{N}}$ are i.i.d.\ copies of $X_0$ and $Y = \sum_{k=1}^{n^2/2} Y^{(k)}$, where $Y^{(k)} \sim {\rm Poi}(C^* \beta T)$ are independent of everything else, then
\begin{equation}
Z_n(T) \stackrel{\rm st}{\leq} Z_n^*(T) := \sum_{i=1}^Y  X^{(i,1)}_0 \stackrel{\rm d}{=} 
\sum_{i=1}^{n^2/2} \sum_{k=1}^{Y^{(i)}} X_0^{(i,k)}.
\end{equation}
With
\begin{equation}
\begin{aligned}
\varphi(s) &:=\mathbb{E} \left( \eee^{s \sum_{k=1}^{Y^{(i)}} X_0^{(i,k)}} \right) 
= \mathbb{E}\left( \mathbb{E} \left(\eee^{s X^{(i,k)}_0} \right)^{Y^{(i)}}  \right)\\ 
&= \exp\left\{ C^* \beta T \left( \frac{\eee^{-C^*T+s}}{1-(1-\eee^{-C^*T})\eee^{s}}-1 \right)  \right\}
\end{aligned} 
\end{equation}
and $I(z)= \sup_{s \in \mathbb{R}} \left[ zs - \log \varphi(s) \right]$, and applying the Chernoff bound, we therefore have 
\begin{equation}\label{eqn:ePF}
\mathbb{P}\left(Z^*_n(T) \geq \eta n^2/2 \right) \leq \eee^{-n^2(I(\eta)+ o(1))/2}  
\qquad \forall\,\eta \geq \mathbb{E}\left( \sum_{k=1}^{Y^{(i)}} X_0^{(i,k)} \right)=C^* \beta T e^{C^*T}.  
\end{equation}
The claim now follows by observing that $\beta$ can be selected sufficiently small to make the inequality on the right-hand-side of \eqref{eqn:ePF} strict, in which case $I(\eta)>0$.
\end{proof}


\begin{lemma}
\label{lem:couple2}
The process $\{(G^*_n(t))_{t \geq 0}\}_{n \in \mathbb{N}}$ satisfies the conditions of Theorem \ref{thm:LDPmain}.
\end{lemma}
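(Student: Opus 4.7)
The lemma requires verifying, for the mimicking process $\{(G^*_n(t))_{t\geq 0}\}_{n\in\mathbb{N}}$ introduced above, the three hypotheses underlying Theorem \ref{thm:LDPmain}: Assumption \ref{ass:CP} (continuity of the induced reference graphon map), Assumption \ref{ass:DP} (LDP for the driving process), and the moduli-of-continuity condition \eqref{eq:Tcon} of Proposition \ref{lem:TC}. My plan is to dispatch these three items in turn, with most of the work going into the first one.

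For Assumption \ref{ass:CP}, I would first note that in $G^*_n$ the rates $\lambda(\cdot)$ and $\mu(\cdot)$ depend on the graphon slot only through the deterministic object $\tilde g^{[F_n]}$, so given $F_n$ the edges are conditionally independent and the induced reference graphon process is precisely the solution $g^{(F)}$ of \eqref{eq:GFdef}, equivalently the fixed-point equation \eqref{eq:HD2}. To establish continuity of $F\mapsto g^{(F)}$ from $D(\mathcal{M}([0,1]),[0,T])$ to $D((\mathscr{W},\lVert\cdot\rVert_{L_1}),[0,T])$, I would view the right-hand side of \eqref{eq:HD2} as an operator $\Phi_F$ on $\mathscr{W}$-valued c\`adl\`ag paths, use the Lipshitz continuity of $\lambda,\mu$ to obtain a local contraction, and iterate with Gronwall's inequality to deduce a bound of the form $\sup_{t\leq T}\lVert g^{(F_n)}(t)-g^{(F)}(t)\rVert_{L_1}\leq C\sup_{s\leq T}\lVert \bar F_n(s;\cdot)-\bar F(s;\cdot)\rVert_{L_1}$ with $C$ depending only on $T$ and on the Lipshitz constants. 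Convergence $\bar F_n\to\bar F$ in $L_1$ along Skorokhod-convergent sequences $F_n\to F$ in $\widebar{\mathcal{M}}_X$ then yields the desired continuity.

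For Assumption \ref{ass:DP}, the driving process of $G^*_n$ is identical to that of $G_n$ in Section \ref{Sec:APP1}, so Lemma \ref{lem:LDPDP} of Appendix \ref{appA} supplies the LDP on $D(\mathcal{M}([0,1]),[0,T])$ with rate $\ell(n)=n$, and $n=o(\binom{n}{2})$ satisfies the scaling. For \eqref{eq:Tcon}, I would decompose the edge switches in a window $[t,t+\delta]$ into two contributions: switches generated by the $\lambda/\mu$-dynamics, whose per-edge rate is bounded by $2C_{\max}$ with $C_{\max}=\sup\lambda\vee\sup\mu<\infty$, giving a stochastic upper bound of $\binom{n}{2}$ independent $\mathrm{Poisson}(2C_{\max}\delta)$ variables; and switches caused by vertex-clock rings, each killing up to $n-1$ adjacent edges and contributing at most $(n-1)$ times a $\mathrm{Poisson}(n\gamma\delta)$ count. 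Both contributions have means of order $n^2\delta$, so for $\delta$ small they lie well below $\tfrac12\varepsilon\binom{n}{2}$ in expectation, and standard Chernoff bounds for Poisson tails yield $\mathbb{P}(C_n(t,\delta)>\varepsilon\binom{n}{2})\leq \exp(-c(\varepsilon)n^2)$ for some $c(\varepsilon)>0$; dividing by $\ell(n)=n$ produces an exponent $-c(\varepsilon)n\to-\infty$, as required.

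The main obstacle is the continuity step, since the quantile map $F\mapsto\bar F$ is not continuous at distribution functions with flats. Care will be needed to ensure that along sequences $F_n\to F$ with $F\in\widebar{\mathcal{M}}_X$ Skorokhod convergence really transfers to $L_1$-convergence of quantiles inside the Gronwall bound; the key observation is that elements of $\widebar{\mathcal{M}}_X$ arise as limits of empirical age distributions generated by exponential clocks, whose limits are absolutely continuous on $[0,t)$ at every fixed $t>0$, so long flats in $F$ cannot appear at Lebesgue-typical heights and the quantile comparison is well behaved.
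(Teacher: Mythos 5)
Your overall architecture coincides with the paper's proof of Lemma \ref{lem:couple2}: Assumption \ref{ass:DP} is imported from Proposition \ref{lem:LDPDP}; the condition \eqref{eq:Tcon} of Proposition \ref{lem:TC} is checked by splitting $C_n(t,\delta)$ into a vertex-clock contribution and an edge-dynamics contribution, dominating each stochastically and applying Chernoff bounds (this is exactly the ``similar, albeit simpler'' argument the paper borrows from the proof of Proposition \ref{prop:DED}); and the bulk of the work is Assumption \ref{ass:CP}, which the paper also establishes by a Lipschitz/Gronwall iteration on $\Delta_n(t)=\lVert g^{[F]}(t;\cdot,\cdot)-g^{[F_n]}(t;\cdot,\cdot)\rVert_{L_1}$ starting from \eqref{eq:GFdef}, together with the observation that the rates of $G^*_n$ involve the graphon slot only through the deterministic object $\tilde g^{[F_n]}$, so that the edges are conditionally independent given $F_n$ and the induced reference graphon is indeed $g^{(F_n)}$.

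Two points in your write-up need repair. First, your closing ``key observation'' that elements of $\widebar{\mathcal{M}}_X$ are absolutely continuous on $[0,t)$ is false: $\mathcal{M}_X$ consists of empirical distributions of \emph{arbitrary} deterministic age paths, so $\widebar{\mathcal{M}}_X$ contains purely atomic measures (all clocks ringing at a common time $\tau$ gives the Dirac mass at $t-\tau$), and even the law-of-large-numbers path $F^*$ of Proposition \ref{prop:CDLLN} carries an atom of mass $\eee^{-\gamma t}$ at age $t$. Absolute continuity is neither available nor needed: at every continuity point $t$ of $F$ (hence a.e.\ $t$ under Skorokhod convergence) weak convergence $F_n(t)\to F(t)$ implies $\bar F_n(t;u)\to\bar F(t;u)$ at every continuity point $u$ of $\bar F(t;\cdot)$, hence Lebesgue-a.e., and boundedness of quantiles by $1$ upgrades this to $L_1$-convergence; this is precisely how the paper replaces $x'_n,y'_n$ by $x,y$ almost everywhere in passing from \eqref{eq:CPP4} to \eqref{eq:CPP5}. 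Second, your claimed bound $\exp(-c(\varepsilon)n^2)$ is too strong for the vertex-ring contribution: a single ring deactivates up to $n-1$ edges, so the relevant event is that of order $\varepsilon n$ of the $n$ clocks ring within a window of length $\delta$, whose probability is only of order $\exp\left(-c(\varepsilon,\delta)\,n\right)$ with $c(\varepsilon,\delta)\approx \frac{\varepsilon}{2}\log\frac{\varepsilon}{1-\eee^{-\gamma\delta}}$; after dividing by $\ell(n)=n$, the divergence to $-\infty$ comes from letting $\delta\downarrow 0$, exactly as in the final display of the proof of Proposition \ref{prop:DED}. Neither repair changes your strategy, but the first one, as written, is a wrong step and should be replaced by the quantile-convergence argument above.
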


\begin{proof}
Assumption \ref{ass:DP} follows from Proposition \ref{lem:LDPDP}. The conditions of Proposition \ref{lem:TC} can be verified using similar (albeit simpler) arguments as in the proof of Proposition \ref{prop:DED} below. To verify Assumption \ref{ass:CP} we need to show that if $F_n \to F$ in $D(\mathcal{M}([0,1]), [0,T])$ then $g^{[F_n]} \to g^{[F]}$ in $D([\mathscr{W}, [0,T])$. Recall that, for any $u \in [0,1]$, $u' = F(t; \bar F(t +{\rm d}t;u)-{\rm d}t)$, and let 
\begin{equation}
u'_n=F_n(t; \bar F_n(t +{\rm d}t;u)-{\rm d}t).
\end{equation}
Below we assume that $t \in [0,T]$ is a continuity point of $F$. Applying \eqref{eq:GFdef} and the triangle inequality, we have
\begin{equation}
\label{eq:CPP1}
\begin{aligned}
\Delta_n&(t +{\rm d}t):=\left\lVert g^{[F]}(t+{\rm d}t; \cdot, \cdot) - g^{[F_n]}(t+{\rm d}t; \cdot, \cdot))\right\rVert_{L_1} \\
&\leq \left\lVert g^{[F]}(t; \cdot, \cdot) - g^{[F_n]}(t; \cdot, \cdot) \right\rVert_{L_1} \\
&+ {\rm d}t \int_{[0,1]^2}{\rm d}x {\rm d}y\, \bigg|\lambda\left(t, \bar F(t; x'),\bar F(t ; y'), F(t; \cdot) , \tilde g^{(F)}(t; \cdot)\right)(1-g^{(F)}(t;x',y')) \\
&\quad\hspace{8mm}- \lambda\left(t, \bar F_n(t ; x'_n),\bar F_n(t ; y'_n), F_n(t; \cdot) , \tilde g^{(F_n)}(t; \cdot)\right)(1-g^{(F_n)}(t;x'_n,y'_n)) \bigg| \\
&+ {\rm d}t \int_{[0,1]^2} {\rm d}x {\rm d}y\,\bigg|\mu\left(t, \bar F(t ; x'),\bar F(t ; y'), F(t; \cdot) , \tilde g^{(F)}(t; \cdot)\right)g^{(F)}(t;x',y') \\
&\quad\hspace{8mm}- \mu\left(t, \bar F_n(t; x'_n),\bar F_n(t ; y'_n), F_n(t; \cdot) , \tilde g^{(F_n)}(t; \cdot)\right)g^{(F_n)}(t;x'_n,y'_n) \bigg|.
\end{aligned}
\end{equation}
Using $F_n \to F$, the Lipschitz continuity of $\mu(\cdot)$ and $\lambda(\cdot)$ and the fact that for any $g,f \in \mathscr{W}$, $\delta_\square(\tilde g, \tilde f)\leq \lVert g-f \rVert_{L_1}$, we see that there exists $\bar K<\infty$ such that
\begin{equation}
\label{eq:CPP2}
\begin{aligned}
&\bigg| \lambda\left(t, \bar F(t ; x'),\bar F(t; y'), F(t; \cdot) , \tilde g^{(F)}(t; \cdot)\right) \\
&\qquad - \lambda\left(t, \bar F_n(t ; x'_n),\bar F_n(t ; y'_n), F_n(t; \cdot) , \tilde g^{(F_n)}(t; \cdot)\right) \bigg| \leq \bar K(\Delta_n(t) + o(1)),
\end{aligned}
\end{equation}
for almost all $(x,y) \in [0,1]^2$, with the same inequality holding for $\mu(\cdot)$. In addition because $\mu(\cdot)$ and $\lambda(\cdot)$ are Lipschitz continuous functions with compact support they are uniformly bounded by some constant $\widehat K<\infty$. Combining this with \eqref{eq:CPP1} and \eqref{eq:CPP2}, we obtain
\begin{equation}
\begin{aligned}
\label{eq:CPP3}
&\Delta_n(t+{\rm d}t) \leq \\
&\Delta_n(t) + 2 {\rm d}t \bigg[\bar K (\Delta_n(t) +o(1)) + \widehat K \int_{[0,1]^2} {\rm d}x\, {\rm d}y\, 
\left| g^{(F_n)}(t; x_n',y'_n) - g^{(F)}(t; x',y') \right| \bigg].
\end{aligned}
\end{equation}
Using the fact that $\Delta_n(0)=0$ and repeatedly applying \eqref{eq:CPP3}, we obtain
\begin{equation}
\begin{aligned}
\label{eq:CPP4}
&\Delta_n(t+{\rm d}t) \leq \\
&2 \int_{0}^{t + {\rm dt}} {\rm d}s \bigg[  \bar K (\Delta_n(s) +o(1)) + \widehat K \int_{[0,1]^2} {\rm d}x\, {\rm d}y \,\left| g^{(F_n)}(s; x_n',y'_n) - g^{(F)}(s; x',y') \right| \bigg].
\end{aligned}
\end{equation}
Because $F(\cdot;\cdot) \in D(\mathcal{M}([0,1]),[0,T])$ we can replace $x'$ and $y'$ by $x$ and $y$ in \eqref{eq:CPP4} almost everywhere. Since $F_n \to F$, we also have $x'_n \to x$ and $y'_n \to y$ almost everywhere. Consequently, from \eqref{eq:CPP4} we obtain
\begin{equation}
\begin{aligned}\label{eq:CPP5}
&\Delta_n(t+{\rm d}t) \leq \\
&2 \int_{0}^{t + {\rm dt}} {\rm d}s \bigg[  \bar K (\Delta_n(s) +o(1)) 
+ (\widehat K+o(1)) \int_{[0,1]^2} {\rm d}x\, {\rm d}y \,\left| g^{(F_n)}(s; x,y) - g^{(F)}(s; x,y) \right| \bigg] \\
&=2 \int_{0}^{t + {\rm dt}} {\rm d}s \Delta_n(s)(\bar K+\widehat K+o(1)).
\end{aligned}
\end{equation}
Because $\Delta_n(0)=0$, \eqref{eq:CPP5} implies that $\Delta_n(t) \to 0$ for all $t$, which completes the proof.
\end{proof}

\smallskip
\noindent
\emph{Proof of Theorem \ref{thm:CD}.}
Theorem \ref{thm:CD} now follows from Lemmas \ref{lem:couple} and \ref{lem:couple2}, in combination with Theorem \ref{thm:LDPmain}. 
\qed

\smallskip
\noindent
\emph{Proof of Proposition \ref{prop:DED}.}
Once we observe that $F^*$ is the unique zero of the rate function in Proposition \ref{lem:LDPDP}, we see that Proposition \ref{prop:CDLLN} follows from Lemmas \ref{lem:couple} and \ref{lem:couple2}, in combination with Theorem \ref{thm:CD}. \qed


\subsubsection{Proofs of the results in Section \ref{sec:ExDS1}}

We proceed by giving the proof of Proposition \ref{prop:DED}.

\smallskip
\noindent
\emph{Proof of Proposition \ref{prop:DED}.} It remains to establish exponential tightness using Proposition \ref{lem:TC}. Recall that $\lambda(\cdot)$ and $\mu(\cdot)$ are bounded functions and let $K<\infty$ denote their maximum. It directly follows that
\begin{equation}
\begin{aligned}
&\lim_{\delta \downarrow 0} \limsup_{n \to \infty}  \frac{1}{n} \log \mathbb{P} \left( \sup_{t \in [0,T]} C_n(t,\delta) > \varepsilon {n \choose 2} \right) \\
&\qquad \leq \lim_{\delta \downarrow 0} \limsup_{n \to \infty}  
\frac{1}{n}\log \mathbb{P} \left( \max_{i \in \{0, \dots, \lfloor \frac{T}{\delta} \rfloor \} } C_n(i\delta,2 \delta) > \varepsilon {n \choose 2} \right) \\
&\qquad \leq \lim_{\delta \downarrow 0} \limsup_{n \to \infty} 
\frac{1}{n} \log \left\{ \sum_{i=0}^{\lfloor \frac{T}{\delta} \rfloor} \max_{i \in \{0, \dots, \lfloor \frac{T}{\delta} \rfloor} 
\mathbb{P}\left(  C_n(i \delta, 2 \delta)> \varepsilon {n \choose 2} \right) \right\}.
\end{aligned}
\end{equation}
Note that there are two ways that edges can change during the interval $[i\delta, (i+2)\delta]$. In the first place, the Poisson clock of one of the adjacent vertices of the rings during $[i\delta, (i+2)\delta]$. We let $C^{(v)}_n(t,\delta)$ denote the number of edges (active or inactive) that are adjacent to such a vertex. Because the probability  each vertex that each vertex rings during $[i\delta, 2 \delta]$ is $1-e^{-\gamma t}$, $C^{(v)}_n(t,2\delta)$ is stochastically dominated by a random variable $X^{(v)}_n(\delta) \sim  n \cdot {\rm Bin}(n, 1-e^{-\gamma \delta})$. In the second place, the value of $U_{ij}$ falls within the interval 
\begin{equation}
\begin{aligned}
\bigg[ \min_{u\in[0,2\delta]} H(t + u, X_i(t)+u, &X_j(t)+u, F_n(t+u)),\\
&\qquad\max_{u \in [0,2\delta]} H(t+u, X_i(t)+u, X_j(t)+u, F_n(t+u) \bigg].
\end{aligned}
\end{equation}
We let $C_n^{(e)}(i\delta,2 \delta)$ denote the number of such edges. Because both $\lambda(\cdot)$ and $\mu(\cdot)$ are bounded by a constant $K$, it follows that 
\begin{equation}
\begin{aligned}
\max_{u \in [0, 2\delta]} H(t + u, &X_i(t)+u, X_j(t)+u, F_n(t+u))\\
&-\min_{u \in [0,2\delta]} H(t+u, X_i(t)+u, X_j(t)+u, F_n(t+u)) \leq 4K\delta.
\end{aligned}
\end{equation}
Consequently, $C_n^{(e)}(i\delta, 2 \delta)$ is stochastically dominated by a random variable $X^{(e)}_n(\delta) \sim {\rm Bin}({n \choose 2}, 4 K \delta)$.
We thus have 
\begin{equation}
\begin{aligned}
&\lim_{\delta \downarrow 0} \limsup_{n \to \infty} \frac{1}{n} \log \left\{ \sum_{i=0}^{\lfloor \frac{T}{\delta} \rfloor} 
\max_{i \in \{0, \dots, \lfloor \frac{T}{\delta} \rfloor} \mathbb{P}\left(  C_n(i \delta, 2 \delta)> \varepsilon {n \choose 2} \right) \right\} \\
&\leq  \lim_{\delta \downarrow 0} \limsup_{n \to \infty} \frac{1}{n} \log \left\{ \sum_{i=0}^{\lfloor \frac{T}{\delta} \rfloor} 
\max_{i \in \{0, \dots, \lfloor \frac{T}{\delta} \rfloor} \mathbb{P}\left(  C^{(v)}_n(i \delta, 2 \delta)> \frac{\varepsilon}{2} {n \choose 2} 
\right)+\mathbb{P}\left(  C^{(e)}_n(i \delta, 2 \delta)> \frac{\varepsilon}{2} {n \choose 2} \right) \right\} \\
&\leq  \lim_{\delta \downarrow 0} \limsup_{n \to \infty} \frac{1}{n} \log 
\frac{T}{\delta}\left\{ \mathbb{P}\left(  X_n^{(v)}(\delta)> \frac{\varepsilon}{2} {n \choose 2} \right)
+\mathbb{P}\left(  X_n^{(e)}(\delta)> \frac{\varepsilon}{2} {n \choose 2} \right) \right\} \\
&\leq  \lim_{\delta \downarrow 0} \limsup_{n \to \infty} \frac{1}{n} \log 
\frac{T}{\delta}\left\{ \exp\left({-n\frac{\varepsilon}{2}\left(\log \frac{\varepsilon}{1-\eee^{-\gamma \delta}}-1\right)}\right)
+\exp\left({-{n \choose 2}\frac{\varepsilon}{2}\left(\log \frac{\varepsilon}{4K\delta}-1\right)}\right) \right\} \\
&= - \infty,
\end{aligned}
\end{equation}
where in the last inequality we apply \cite[Theorem 2.3]{{M98}} (which is essentially a Chernoff bound). \qed


\subsubsection{Proofs of the results in Section \ref{sec:ExDS3}}

The proof of Proposition \ref{prop:convex} relies on the following direct computation.

\smallskip\noindent
\emph{Proof of Proposition \ref{prop:convex}.}
Pick $P_1, P_2 \in \mathcal{M}([0,T])$ and suppose that 
\begin{equation}
\label{eq:EDA}
2 \int_0^1 \int_0^y x \,P_i({\rm d}x) \,P_i({\rm d} y) \leq e^*, \qquad i=1,2.
\end{equation}
Observe that if $X_i^{(k)}$ are independent random variables with distribution $P_i$, then 
\begin{equation}
2 \int_0^1 \int_0^y x \,P_i({\rm d}x) \,P_i({\rm d} y) = \mathbb{E}(X_i^{(1)} \wedge X_i^{(2)}).
\end{equation}
Let $P_3 = cP_1+(1-c)P_2$ with $c \in [0,1]$. We have 
\begin{align}
\begin{split}
2 \int_0^1 \int_0^y x \,
&P_3({\rm d}x) \,P_3({\rm d} y) = \mathbb{E}(X_3^{(1)} \wedge X_3^{(2)}) \\
&= c^2 \mathbb{E}(X_1^{(1)} \wedge X_1^{(2)}) + (1-c)^2 \,\mathbb{E}(X_2^{(1)} \wedge X_2^{(2)})
+ 2c(1-c) \, \mathbb{E}(X_1^{(1)} \wedge X_2^{(1)}) \\
&\leq e^*(c^2+(1-c)^2) + 2c(1-c) \, \mathbb{E}(X_1^{(1)} \wedge X_2^{(1)}).
\end{split}
\end{align}
Hence it remains to show that $\mathbb{E}(X_1^{(1)} \wedge X_2^{(1)}) \leq e^*$. We have 
\begin{align}
\begin{split}
\mathbb{E}(X_1^{(1)} \wedge X_2^{(1)}) &= \int^1_0  {\rm d}x\, 
\mathbb{P} (X_1 \geq x)\, \mathbb{P} (X_2 \geq x) \\
&\leq \left( \int^1_0  {\rm d}x\, \mathbb{P} (X_1 \geq x)^2 \int^1_0  {\rm d}x\, \mathbb{P} (X_2 \geq x)^2 \right)^{1/2} \leq e^*,
\end{split}
\end{align}
where in the second step we apply the Cauchy-Schwarz inequality, and in the final step use \eqref{eq:EDA}.
\qed


\appendix


\section{Rate function for the driving process}
\label{appA}

To establish an LDP for the illustrative example in Section \ref{sec:example} and the examples in Sections \ref{Sec:APP1} and \ref{sec:ExDS1}, we need to verify Assumption \ref{ass:LDPtypes}, i.e., we need to establish an LDP for the driving process. The latter are dealt with in Appendix \ref{appA1}, the former in Appendix \ref{appA2}.


\subsection{LDP for driving process in Section \ref{sec:ExSD}}
\label{appA1}

Recall that in Sections \ref{Sec:APP1} and \ref{sec:ExDS1} each vertex $v$ is assigned a Poisson clock that rings at times $\{\tau_k(v)\}_{k \in \mathbb{N}}$, and 
\begin{equation}
X_v(t) := t - \max_k \{ \tau_k(v) : \tau_k(t) \leq t \},
\end{equation}
with $\mu_n(t)= \frac{1}{n} \sum^n_{i=1} \delta_{X_i(t)}$. Here we drop the restriction that $X_i(t) \leq 1$, and suppose that $\mu_n(t) \in \mathcal{M}(\mathbb{R}_+)$. We also suppose that  
\begin{equation}
\label{DGa}
\mu_n(0) = \frac{1}{n}\sum_{i=1}^n \delta(X_i(0)) \to v, \qquad n \to \infty,
\end{equation}
in $\mathcal{M}(\mathbb{R}_+)$. 
For $A \subseteq \mathbb{R}_+$ and $t>0$, we let 
\begin{equation}
\label{D1def}
D_{\bs 1} \mu_t(A) = \lim_{h \to 0} \frac{\mu_{t+h}(A+h) - \mu_{t}(A) }{h}.
\end{equation}

\begin{proposition}
\label{lem:LDPDP}
The sequence of processes $(\mu_n)_{n \in \mathbb{N}}$ satisfies the LDP on $D([\mathcal{M}(\mathbb{R}_+),[0,T])$ with rate $n$ and with rate function
\begin{align}
\begin{split}
\label{eq:rateex1}
K(\mu) 
&=\int^T_0 {\rm d}t \int_0^\infty 
[\gamma \mu_i({\rm d}x)- D_{\bs 1} \mu_t ({\rm d}x)]
+ \int_0^T {\rm d} t \int^{\infty}_0  
D_{\bs 1} \mu_t( {\rm d}x) \log \left( \frac{D_{\bs 1} 
\mu_t( {\rm d}x)}{f_t(0)\mu_i({\rm d}x)} \right)\\
&\qquad\qquad\qquad 
+ \int^T_0 {\rm d}t\, f_t(0) \log (f_t(0)/\gamma),
\end{split}
\end{align}
where $f_t(0) := \lim_{h \to 0} \mu_t([0,h])/h$ if $\mu$ is absolutely continuous and $\infty$ otherwise.
\end{proposition}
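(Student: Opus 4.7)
My plan is a two-step reduction: first apply Sanov's theorem on path space, then contract to the path of empirical time-marginals. The trajectories $X_1(\cdot), \ldots, X_n(\cdot)$ are i.i.d.\ elements of the Skorokhod space $D([0,T], \mathbb{R}_+)$, each being the age process of a rate-$\gamma$ Poisson renewal process with initial distribution $v$. Denote the common trajectory law by $P$. Sanov's theorem on this Polish space (see e.g.\ \cite[Thm.~VI.2]{dH00}) gives the LDP for the empirical path-measure $L_n := n^{-1}\sum_{i=1}^n \delta_{X_i(\cdot)}$ with speed $n$ and rate function $Q \mapsto H(Q|P)$. The evaluation projection $\Psi: Q \mapsto (Q \circ \pi_t^{-1})_{t \in [0,T]}$ from $\mathcal{M}(D([0,T], \mathbb{R}_+))$ to $D(\mathcal{M}(\mathbb{R}_+), [0,T])$ is continuous (a routine check in the Skorokhod topology on both sides), and since $\mu_n = \Psi(L_n)$ the contraction principle delivers the LDP for $\mu_n$ with rate function $K(\mu) = \inf \{ H(Q|P) : \Psi(Q) = \mu \}$.

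Next I would identify this infimum with \eqref{eq:rateex1} by showing the optimal $Q$ is a Markov law $Q^\mu$ obtained by replacing the constant jump rate $\gamma$ of $P$ with a time-inhomogeneous, state-dependent rate $\Lambda(t,x)$. The Markov reduction is an I-projection argument exploiting the factorization of relative entropy along the natural filtration; it forces $\mu_t$ to be absolutely continuous and $\mu_0 = v$ whenever $K(\mu) < \infty$, matching the conventions in the statement. Under $Q^\mu$ the one-particle density $f_t(\cdot)$ solves the transport-reaction equation $\partial_t f_t + \partial_x f_t = -\Lambda(t, \cdot) f_t$ on $(0,\infty)$ with boundary condition $f_t(0^+) = \int_0^\infty \Lambda(t,x) f_t(x)\,{\rm d}x$. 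Integrating over sets $A \subset (0,\infty)$ and using \eqref{D1def} yields $D_{\bs 1}\mu_t({\rm d}x) = -\Lambda(t,x)\mu_t({\rm d}x)$, so that $\Lambda$ is recovered as the Radon--Nikodym derivative of $-D_{\bs 1}\mu_t$ with respect to $\mu_t$. Girsanov's formula for pure-jump processes then yields
\begin{equation*}
H(Q^\mu | P) = \int_0^T {\rm d}t \int_0^\infty \bigl[ \Lambda \log (\Lambda / \gamma) - \Lambda + \gamma \bigr]\, \mu_t({\rm d}x),
\end{equation*}
and the decomposition $\log(\Lambda/\gamma) = \log(\Lambda/f_t(0)) + \log(f_t(0)/\gamma)$, together with $\Lambda\, \mu_t({\rm d}x) = -D_{\bs 1}\mu_t({\rm d}x)$ and the identity $\int_0^\infty \Lambda\, \mu_t({\rm d}x) = f_t(0)$, would rearrange the three resulting pieces precisely into the three integrals appearing in \eqref{eq:rateex1}.

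The main obstacle will be rigorously justifying the Markov I-projection: one must show that the restriction to Markov laws loses nothing (equivalently, that any non-Markov $Q$ can be Markovianized at no entropy cost while preserving $\Psi(Q)$), which is a conditional-entropy argument requiring care because the conditioning filtration is generated by the full past trajectory rather than the current state. A secondary technical issue will be exponential tightness in $D(\mathcal{M}(\mathbb{R}_+), [0,T])$: non-compactness of $\mathbb{R}_+$ forces a truncation argument controlling the tail mass $\mu_n(t)([M,\infty))$ uniformly in $t$, while the modulus-of-continuity estimate reduces to super-exponential concentration for the maximum number of clock rings among the $n$ independent Poisson processes in any short time window, both of which follow from standard Poisson tail bounds.
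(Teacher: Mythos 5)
Your route (Sanov on path space, then contraction along the marginal--flow map $\Psi$) breaks down at precisely the step you dismiss as a routine check: the map $\Psi\colon Q \mapsto (Q\circ\pi_t^{-1})_{t\in[0,T]}$ is \emph{not} continuous from $\mathcal{M}(D([0,T],\mathbb{R}_+))$ (weak topology) to $D(\mathcal{M}(\mathbb{R}_+),[0,T])$ (Skorokhod topology). For a counterexample, take $Q_n=\tfrac12\delta_{x_n}+\tfrac12\delta_{y_n}$, where $x_n$ and $y_n$ are age paths that reset to $0$ at times $\tfrac12-\tfrac1n$ and $\tfrac12+\tfrac1n$ respectively; both converge in $D([0,T],\mathbb{R}_+)$ to the path $x$ resetting at $\tfrac12$, so $Q_n\Rightarrow\delta_x$, yet $\Psi(Q_n)$ has two non-vanishing jumps at merging times and therefore cannot converge in the $J_1$ topology to $\Psi(\delta_x)$, which has a single jump. (Even the single-time evaluation $\pi_t$ is discontinuous on Skorokhod space.) Hence the contraction principle cannot be invoked as stated; you would need an extended or approximate contraction argument (exponentially good approximations, or continuity restricted to the effective domain of $H(\cdot\,|\,P)$ combined with an exponential-equivalence or tightness upgrade), and that is substantive work, comparable to what the paper does instead: it avoids path space entirely, proving finite-dimensional LDPs for $(\mu_n(t_1),\dots,\mu_n(t_r))$ via \cite[Theorem 3.5]{DG87}, computing the finite-dimensional rate in closed form by optimising over test functions, and then passing to the projective limit and establishing exponential tightness.

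Two further points. First, Sanov's theorem requires i.i.d.\ trajectories, whereas in the paper's setting the initial ages are deterministic with empirical measure merely converging to $v$ as in \eqref{DGa}; the paths are then independent but not identically distributed, so you would need a Sanov-type result for independent non-identically distributed variables --- which is exactly what the Dawson--G\"artner theorem supplies and why the paper uses it. Second, the identification $\inf\{H(Q\,|\,P)\colon\Psi(Q)=\mu\}=K(\mu)$ via Markovianisation and Girsanov is the heart of the matter and remains a sketch; you rightly flag it, and your resulting expression does match \eqref{eq:rateex1} (up to the sign convention for $D_{\bs 1}$ in \eqref{D1def}, under which the reset flux is $-D_{\bs 1}\mu_t$), but without a rigorous I-projection argument the rate function is left in variational form, whereas the paper's finite-dimensional computation produces the closed form directly. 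In short: an attractive alternative strategy, but as written the contraction step is invalid and the identification of the rate function is incomplete.
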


\begin{proof}
We prove Proposition \ref{lem:LDPDP} by applying the standard method of proving sample-path LDPs: 
\begin{itemize}
\item[(I)] 
Establish a finite-dimensional LDP for times $0<t_1<\dots < t_k<T$.
\item[(II)] 
Write down a limiting expression for the rate function when the gaps between the times shrink to zero.
\item[(III)] 
Strengthen the topology by establishing exponential tightness.
\end{itemize}

\medskip
\noindent{\underline{Step (I):}} Let 
\begin{equation}
P_x^{(\bs t)}({\rm d} y^{(1)}, \dots,  {\rm d} y^{(r)}) = \mathbb{P}(X_i(t_1) \in {\rm d}y^{(1)}, \dots, X_i(t_r) \in  {\rm d} y^{(r)} | X_i(0)=x),
\end{equation}
where $\bs t = (t_1, \dots, t_r)$ with $0 < t_1 < t_2 < \dots < t_r < T$. We apply the following result, which is an immediate consequence of \cite[Theorem 3.5]{DG87}: \textit{If \eqref{DGa} holds, then the sequence of measures $(\mathbb{P}((\mu_n(t_1), \dots, \mu_n(t_r)) \in \cdot))_{n\in\mathbb{N}}$ satisfies the LDP with rate $n$ and with rate function
\begin{equation}\label{DGtheorem}
\begin{aligned}
&I^{(\bs t)}_v(\mu_1, \dots, \mu_r) \\
&= \sup_{f_1, \dots, f_r \in C_b(\mathbb{R}_+)^r} \bigg[ \sum_{i=1}^r \int_{\mathbb{R}_+} \mu_i({\rm d}z) f_i(z)\\ 
&\qquad \qquad - \int_{\mathbb{R}_+} v({\rm d}x) \log \int_{\mathbb{R}_+^r} P^{(\bs t)}_x({\rm d}y^{(1)}, \dots ,{\rm d}y^{(r)}) 
\exp \left( \sum_{i=1}^r f_i(y^{(i)}) \right) \bigg],
\end{aligned}
\end{equation} 
where $(\mu_1, \dots, \mu_r) \in \mathcal{M}(\mathbb{R}_+)^r$.}

To apply \eqref{DGtheorem}, we need to write down a formula for $P_x^{(\bs t)}({\rm d} y^{(1)}, \dots,  {\rm d} y^{(r)})$. By the Markov property, this is essentially equivalent to writing down an expression for $P_x^{(t)}({\rm d}y)$, i.e., for a single time step. If $X_i(0)=x$, then the probability that $X_i(t)=x+t$ is $\eee^{-\gamma t}$ (i.e., the probability that the Poisson clock associated with vertex $i$ does not ring in the time interval $[0,t]$). On the other hand, if $y \leq t$, then the probability that $X_i(t) \in {\rm d} y$ is the probability that the Poisson clock associated with vertex $i$ rings in the time interval $[t-{\rm d}y,t]$ (which occurs with probability $\gamma\,{\rm d}y$), and afterwards does not ring again (which occurs with probability $\eee^{-\gamma y}$). We thus have
\begin{equation}
P^{(t)}_x({\rm d}y) 
= \begin{cases}
\eee^{-\gamma t} \qquad &\text{if } y=x+t, \\
\gamma {\rm d} y\, \eee^{-\gamma y} \qquad &\text{if } y \leq t, \\
0  &\text{otherwise}.
\end{cases}
\end{equation}
If we apply \eqref{DGtheorem} for a single time step, then we obtain
\begin{align}
I_v^{(t)}(\mu) 
&= \sup_{f \in C_b([0,\infty))} \left[ \int_0^\infty \mu({\rm d}z) f(z) 
- \int_0^\infty v({\rm d} x) \log \left( \int_0^\infty P^{(t)}_x({\rm d}y) \eee^{f(y)} \right) \right] \nonumber \\
&= \sup_{f \in C_b[0,\infty)} \left[ \int_0^\infty \mu({\rm d} z) f(z) 
- \int_0^\infty v({\rm d}x) \log \left( \eee^{-\gamma t + f(x+t)} 
+ \int_0^t {\rm d}y \,\gamma \eee^{-\gamma y +f(y)} \right) \right].
\label{DZc}
\end{align}
We would like to derive a closed form expression for $I^{(t)}(\mu)$. To do this, we consider the term under the supremum, i.e.,
\begin{equation}
\label{JEq}
\int_0^\infty \mu({\rm d} z) f(z) 
- \int_0^\infty v({\rm d}x) \log \left( \eee^{-\gamma t + f(x+t)} 
+ \int_0^t {\rm d}y \,\gamma \eee^{-\gamma y +f(y)} \right),
\end{equation}
take the derivative with respect to $f(x+t)$ for fixed $x \geq 0$, and set this to zero. This gives
\begin{equation}
\mu( {\rm d}(x+t)) -  v( {\rm d} x)\, \frac{\eee^{-\gamma t + f(x+t)}}{\eee^{-\gamma t + f(x+t)}
+ \int_0^t {\rm d}y\,\gamma\, \eee^{-\gamma y + f(y)}}=0,
\end{equation}
which leads to
\begin{equation}
\label{fxtE}
f(x+t) = \log \left( \frac{\mu( {\rm d}(x+t)) \int_0^t {\rm d}y\, \gamma\, \eee^{-\gamma y 
+ f(y)}}{[v( {\rm d} x)-\mu( {\rm d}(x+t)) ]\,\eee^{-\gamma t }} \right).
\end{equation}
Substituting \eqref{fxtE} into \eqref{JEq}, we obtain
\begin{align}
\begin{split}
\eqref{JEq} 
&= \int_0^{t-} \mu({\rm d}z)\, f(z) + \int_0^\infty \mu({\rm d}(x+t))\log \left( \frac{\mu( {\rm d}(x+t)) 
\int_0^t {\rm d}y\, \gamma\, \eee^{-\gamma y + f(y)}}{[v({\rm d} x)-\mu( {\rm d}(x+t)) ]\,\eee^{-\gamma t }} \right) \\
&\qquad - \int_0^\infty v({\rm d} x)\, \log \left( \eee^{-\gamma t} \frac{\mu( {\rm d}(x+t))
\int_0^t {\rm d}y\, \gamma \eee^{-\gamma y + f(y)}}{[v( {\rm d} x)-\mu( {\rm d}(x+t)) ]\,\eee^{-\gamma t }} 
+ \int^t_0 {\rm d}y\, \gamma \eee^{-\gamma y + f(y)}\right)  \\
&= \int_0^{t-} \mu({\rm d}z) f(z) + \int_0^\infty \mu({\rm d}(x+t))\log \left( \frac{\mu( {\rm d}(x+t)) 
\int_0^t {\rm d}y\, \gamma\, \eee^{-\gamma y + f(y)}}{[v( {\rm d} x)-\mu( {\rm d}(x+t)) ]\,\eee^{-\gamma t }} \right) \\
&\qquad - \int_0^\infty v({\rm d} x) \log \left(  \frac{v( {\rm d}x)}{v( {\rm d} x)-\mu( {\rm d}(x+t))} 
\int^t_0 {\rm d}y\, \gamma\, \eee^{-\gamma y + f(y)}\right)  \\
&=\int_0^\infty \mu({\rm d}(x+t))\log \left( \frac{\mu( {\rm d}(x+t))}{[v( {\rm d} x)-\mu( {\rm d}(x+t)) ]\, \eee^{-\gamma t }} \right) \\
&\qquad - \int_0^\infty v({\rm d} x)\, \log \left(  \frac{v( {\rm d}x)}{v( {\rm d} x)-\mu( {\rm d}(x+t))}\right) \\
&\qquad +\int_0^{t-} \mu({\rm d}z)\, f(z) - \int_0^\infty [v({\rm d}x)- \mu( {\rm d}(x +t))] 
\log \left( \int^t_0 {\rm d} y\, \gamma\, \eee^{-\gamma {y + f(y)}} \right).
\end{split}
\end{align}

We next optimise over $f(z)$, $0 \leq z < t$ (note that previously we optimised over $f(z)$, $t \leq z \leq \infty$). To do this, we consider the last line, which reads
\begin{align}
&\int_0^{t-} \mu({\rm d}z)\, f(z) - \int_0^\infty [v({\rm d}x)- \mu( {\rm d}(x +t))] \log 
\left( \int^t_0 {\rm d} z\, \gamma \eee^{-\gamma z + f(z)} \right) 
\label{fuwrE} \\
&\qquad =\int_0^{t-} \mu({\rm d}z)\, f(z) - \left(1 - \int_{t+}^\infty \mu( {\rm d}x) \right) \log 
\left( \int^t_0 {\rm d} z\, \gamma \eee^{-\gamma z + f(z)} \right) ,
\label{flwrE}
\end{align}
and take the derivative with respect to $f(z)$ for fixed $z \in [0,t]$, and set this to zero. This gives
\begin{equation}
\mu({\rm d}z) -  \left(1 - \int_{t+}^\infty \mu( {\rm d}x) \right) \frac{{\rm d} z\, \gamma 
\eee^{-\gamma z + f(z)}}{\int_0^t {\rm d}z\, \gamma \eee^{\gamma z + f(z)}}=0,
\end{equation}
which implies that 
\begin{equation}
\label{fzLwr}
f(z) = \log \left(  \frac{\mu({\rm d}z) \int_0^t {\rm d}y\, \gamma\, \eee^{-\gamma y + f(y)} }
{\left(1 - \int_{t+}^\infty \mu( {\rm d}x) \right) {\rm d} z\, \gamma\, \eee^{-\gamma z}} \right).
\end{equation}
Substituting \eqref{fzLwr} into \eqref{flwrE}, we obtain
\begin{align}
\begin{split}
&\int_0^{t-} \mu( {\rm d} z)\, \log \left(  \frac{\mu({\rm d}z) \int_0^t {\rm d}y\, \gamma\, \eee^{-\gamma y + f(y)} }
{\left(1 - \int_{t+}^\infty \mu( {\rm d}x) \right) {\rm d} z\, \gamma \eee^{-\gamma z}} \right) 
- \left(1 - \int_{t+}^\infty \mu( {\rm d}x) \right) \log \left( \int^t_0 {\rm d} z\, \gamma\, \eee^{-\gamma z + f(z)} \right) \\
&= \int^{t-}_0 \mu({\rm d} z)\, \log \left( \frac{\mu({\rm d}z)}{{\rm d}z\, \gamma \eee^{-\gamma z}} \right) 
- \int^{t-}_0 \mu({\rm d} z)\, \log \left(\int^{t-}_0 \mu({\rm d} z) \right).
\end{split}
\end{align}
Combining this with \eqref{fuwrE}, we obtain
\begin{align*}
\eqref{JEq}
&=\int_0^\infty \mu({\rm d}(x+t))\log \left( \frac{\mu( {\rm d}(x+t))}{[v( {\rm d} x)-\mu( {\rm d}(x+t))]\,\eee^{-\gamma t }} \right)\\ 
&\quad - \int_0^\infty v({\rm d} x) \log \left(  \frac{v( {\rm d}x)}{v( {\rm d} x)-\mu( {\rm d}(x+t))}\right) \\
&\quad + \int^{t-}_0 \mu({\rm d} z) \log \left( \frac{\mu({\rm d}z)}{{\rm d} z\, \gamma\, \eee^{-\gamma z}} \right) 
- \int^{t-}_0 \mu({\rm d} z)\, \log \left(\int^{t-}_0 \mu({\rm d} z) \right)\\
&=\int_0^\infty [v({\rm d}x) - \mu( {\rm d}(x+t))] \log \left( v({\rm d} x) - \mu( {\rm d} (x +t) )\right)\\ 
&\quad + \int^\infty_0 \mu( {\rm d}(x+t)) \log \left( \frac{\mu( {\rm d}(x+t))}{\eee^{-\gamma t}} \right)\\
&\quad - \int^\infty_0 v({\rm d} x) \log ( v( {\rm d} x)) - \int^{t-}_0 \mu( {\rm d}z)\, \log \left( \int^{t-}_0 \mu( {\rm d} z)  \right) \\
&\quad+\int^{t-}_0 \mu( {\rm d}z)\, \log \left( \frac{\mu( {\rm d} z)}{{\rm d}x\, \gamma\, \eee^{- \gamma z}}  \right) \\
&= \int^\infty_0 v({\rm d}x) \left[ \frac{v({\rm d}x) - \mu({\rm d}(x+t))}{v({\rm d}x)} 
\log \left( \frac{v({\rm d}x) - \mu( {\rm d}(x+t))}{v({\rm d}x)} \right) \right.\\ 
&\quad \quad \left. + \frac{\mu({\rm d}(x+t))}{v({\rm d}x)} \log \left( \frac{\mu({\rm d}(x+t))}
{\eee^{-\gamma t}v({\rm d} x)} \right)\right] \\
&\quad - \left( \int^{t-}_0 \mu( {\rm d}z) \right) \log \left( \int^{t-}_0 \mu( {\rm d} z)  \right) 
+ \int^{t-}_0 \mu( {\rm d}z) \log \left( \frac{\mu( {\rm d} z)}{{\rm d}z\, \gamma\, \eee^{- \gamma z}}  \right).
\end{align*}

Rearranging further, we obtain the following.

\begin{lemma}
\begin{align}
\begin{split}\label{FDRF}
I^{(t)}_v(\mu) &=  \int^\infty_0 \mu({\rm d}(x+t)) \log \left( \frac{\mu({\rm d}(x+t))}{v({\rm d} x)\,\eee^{-\gamma t}} \right) \\
&\qquad + \int^\infty_0 [v({\rm d}x) - \mu({\rm d}(x+t))] \log \left( \frac{v({\rm d}x) - \mu( {\rm d}(x+t))}{v({\rm d}x) 
\int^{t-}_0 \mu( {\rm d} z)} \right) \\
&\qquad + \int^{t-}_0 \mu( {\rm d}z) \log \left( \frac{\mu( {\rm d} z)}{{\rm d}z\, \gamma \eee^{- \gamma z}}  \right).
\end{split}
\end{align}
\end{lemma}

Note that if 
\[
v({\rm d}x) = \begin{cases}
1, \qquad \text{if }x=0, \\
0, \qquad \text{otherwise},
\end{cases}
\]
then for $\mu$ absolutely continuous with respect to $P^{(t)}_v$ we have
\begin{align*}
I_v^{(t)}(\mu) &= \mu({\rm d}t) \log \left( \frac{\mu({\rm d}t)}{\eee^{-\gamma t}} \right) +(1-\mu({\rm d}t)) \log\left( \frac{1 - \mu({\rm d}t)}{1 - \mu({\rm d}t)} \right) + \int^{t-}_0 \mu({\rm d}z) \log \left( \frac{\mu({\rm d}z}{{\rm d}z\, \gamma \eee^{-\gamma z}} \right) \\
&=\mu({\rm d}t) \log \left( \frac{\mu({\rm d}t)}{\eee^{-\gamma t}} \right) + \int^{t-}_0 \mu({\rm d}z) \log \left( \frac{\mu({\rm d}z)}{{\rm d}z\, \gamma \eee^{-\gamma z}} \right), 
\end{align*}
which is the relative entropy from $P^{(t)}_v$ to $\mu$.

The above arguments extend naturally to establish the following finite-dimensional large deviation principle.

\begin{lemma}
If \eqref{DGa} holds, then the sequence of measures $(\mathbb{P}((\mu_n(t_1), \dots, \mu_n(t_r)) \in \cdot))_{n\in\mathbb{N}}$ satisfis the LDP with rate $n$ and with rate function $I^{(\bs t)}_v(\mu) = \sum_{i=1}^r I^{(t_i-t_{i-1})}_{\mu_{i-1}}(\mu_i)$, where $t_0=0$ and $\mu_0=v$.
\end{lemma}

\medskip 
\noindent
\underline{Step (II):}
By the Dawson-G\"{a}rtner projective limit LDP (\cite[Theorem 4.6.1]{DZ98}), the sequence of measures $(\mathbb{P}(\mu_n \in \cdot))_{n\in\mathbb{N}}$ satisfies the LDP in the pointwise topology with rate function 
\begin{equation}
I_v^{(pw)}(\mu)=\sup_{0=t_0<t_1<\dots <t_k=T} I^{(t_0, \dots, t_k)}_v(\mu(t_0), \dots, \mu(t_k)). 
\end{equation}
The next step is therefore to prove the following lemma.

\begin{lemma}
$K(\mu)=I_v^{(pw)}(\mu)$ for all $\mu \in D( \mathcal{M}(\mathbb{R}),[0,T])$ with $\mu(0)=v$.
\end{lemma}

\begin{proof}
The proof comes in two steps.

\medskip\noindent
$\bullet$ 
We start by demonstrating that $I^{(pw)}_v(\mu) \geq K(\mu)$ for any $\mu \in \mathcal{M}( \mathbb{R}^+) \times [0,T]$. Consider the times $0<t_1=\Delta<t_2=2\Delta<\dots<t_k=k\Delta = T$, and let $\mu_i = \mu(t_i) \in \mathcal{M}( \mathbb{R}_+)$. The rate function of the finite-dimensional LDP is 
\begin{align}
I^{(\bs t)}_v(\bs \mu) &= \sum^k_{i=1} I_{\mu_{i-1}}^{(t_i-t_{i-1})}(\mu_i) \\
&= \sum_{i=1}^k \bigg[ \int^\infty_0 \mu_i({\rm d}(x+\Delta)) \log 
\left( \frac{\mu_i({\rm d}(x+\Delta))}{\mu_{i-1}({\rm d} x)e^{-\gamma \Delta}} \right) \label{LT1}\\
&\qquad +\int^\infty_0 [\mu_{i-1}({\rm d}x) - \mu_i({\rm d}(x+\Delta))] \log \left( \frac{\mu_{i-1}({\rm d}x) 
- \mu_i( {\rm d}(x+\Delta))}{\mu_{i-1}({\rm d}x) \int^{\Delta-}_0 \mu_i( {\rm d} z)} \right) \label{LT2} \\
&\qquad + \int^{\Delta-}_0 \mu_i( {\rm d}z) \log \left( \frac{\mu_i( {\rm d} z)}{{\rm d}z\, \gamma e^{- \gamma z}}  \right) \bigg]. 
\label{LT3}
\end{align}
We will deal with \eqref{LT1}, \eqref{LT2}, \eqref{LT3} separately. 

Recall the definition of $D_{\bs 1}$ from \eqref{D1def}. For the first term we have 
\begin{align}
\begin{split}
\eqref{LT1}&= \sum_{i=0}^{k-1}  \int^\infty_0 \mu_{i+1}({\rm d}(x+\Delta)) \log \left( \frac{\mu_{i+1}({\rm d}(x+\Delta))}{\mu_{i}({\rm d} x)\,\eee^{-\gamma \Delta}} \right) \\
&= \sum_{i=0}^{k-1}  \int^\infty_0 \mu_{i+1} ( {\rm d}(x+\Delta)) \log \left( 1 + \frac{\mu_{i+1}({\rm d}(x+\Delta)) 
- \mu_i( {\rm d}x) \eee^{-\gamma \Delta}}{\mu_i({\rm d}x)\, \eee^{-\gamma \Delta}} \right) \\
&= o(1)+  \sum_{i=0}^{k-1}  \int^\infty_0 \frac{\mu_{i+1} ( {\rm d}(x+\Delta))}{\mu_i({\rm d}x)\,\eee^{-\gamma \Delta}} [\mu_{i+1}({\rm d}(x+\Delta)) - \mu_i( {\rm d}x) (1-\gamma \Delta)] \\
&= o(1) + \sum_{i=0}^{k-1}  \sum_{i=0}^{k-1}  \int^\infty_0 \Delta [\gamma \mu_i({\rm d}x)- D_{\bs 1} \mu_t ({\rm d}x)] \\
&\to \int^T_0 {\rm d}t \int_0^\infty [\gamma \mu_i({\rm d}x)- D_{\bs 1} \mu_t ({\rm d}x)].
\end{split}
\end{align}
For the second term we have
\begin{align}
\begin{split}
\eqref{LT2} &=\sum_{i=0}^{k-1} \int^\infty_0 [\mu_{i}({\rm d}x) - \mu_{i+1}({\rm d}(x+\Delta))] 
\log \left( \frac{\mu_{i}({\rm d}x) - \mu_{i+1}( {\rm d}(x+\Delta))}{\mu_{i}({\rm d}x) \int^{\Delta-}_0 \mu_{i+1}({\rm d} z)} \right) \\
&\to \int_0^T {\rm d} t \int^{\infty}_0 D_{\bs 1} \mu_t( {\rm d}x) \log\left( \frac{D_{\bs 1} \mu_t( {\rm d}x)}{\mu_i({\rm d}x) f_t(0)} \right) ,
\end{split}
\end{align}
where $f_t(0) := \lim_{h \to 0} \mu_t([0,h])/h$. Finally, for the third term we have 
\begin{align}
\eqref{LT3} &= \sum_{i=1}^k \int^{\Delta-}_0 \mu_i({\rm d}z)\, \log \left( \frac{\mu_i( {\rm d} z)}{{\rm d}z\, \gamma \eee^{- \gamma z}}  \right) \to \int^T_0 {\rm d}t f_t(0) \log (f_t(0)/\gamma).
\end{align}
Combining the three terms, we get the expression in \eqref{eq:rateex1}.

\medskip\noindent
$\bullet$
To prove that $I_v^{(pw)}(\mu) \leq K(\mu)$, suppose to the contrary that $I_v^{(pw)}(\mu) > K(\mu)$ for some $\mu \in \widebar{\mathcal{M}}_X$. In that case there must exist a vector $\bs{t}=(t_0, \dots, t_k)$ such that 
\begin{equation}
\label{eq:Cont1}
I^{(\bs{t})}_v(\mu(t_0), \dots, \mu(t_k))> K(\mu). 
\end{equation}
For $\ell \in \mathbb{N}$, let $\bs{s}^\ell = (s_1^{[\ell]}, \dots, s_{k_\ell}^{[\ell]}$) be such that 
\begin{itemize}
\item for any $i \in \{0, \dots, k\}$ there exists $j \in \{0, \dots, k_\ell\}$ with $t_i=s^{[\ell]}_j$,
\item $\lim_{\ell\to\infty} \max_{j \in \{1, \dots, k_\ell\}} | s_k^{[\ell]}- s_{j-1}^{[\ell]}| = 0$.
\end{itemize}
By the contraction principle, for any $\ell \geq 1$, we have 
\begin{equation}
\label{eq:Cont2}
I^{(\bs{t})}_v(\mu(t_0), \dots, \mu(t_k)) \leq I^{(\bs{s}^{[\ell]})}_v(\mu(s^{[\ell]}_0), \dots, \mu(s^{[\ell]}_{k_\ell})),
\end{equation}
On the other hand, following very similar arguments as those above, we get
\begin{equation}
\lim_{\ell \to \infty} I^{(\bs{s}^{[\ell]})}_v(\mu(s^{[\ell]}_0), \dots, \mu(s^{[\ell]}_{k_\ell})) = K(\mu),
\end{equation}
which in combination with \eqref{eq:Cont2} contradicts \eqref{eq:Cont1}. The proof that $I^{(pw)}_v(\mu)=\infty$ when $\mu$ is not absolutely continuous follows from standard arguments and is therefore omitted.
\end{proof}

\medskip
\noindent
\underline{Step (III):} We now establish exponential tightness.

\begin{lemma}
The sequence of measures $\mathbb{P}(\mu_n(\cdot) \in \cdot)$ is exponentially tight in $D(\mathcal{M}(\mathbb{R}_+),[0,T])$.
\end{lemma}
\begin{proof}
By \cite{FK06} it suffices to show that
\begin{equation}
\lim_{\delta \to 0} \limsup_{n \to \infty} \frac{1}{n} \log \mathbb{P} (w'(\mu_n, \delta, T) > \varepsilon) = - \infty,
\end{equation}
where
\begin{equation}
w'(x,\delta, T) = \inf_{\{t_i\}} \max_i \sup_{s,t \in [t_{i-1}, t_i)} r(x(s),x(t))
\end{equation}
and $r$ is a metric which generates the weak topology. We equip the space $\mathcal{M}(\mathbb{R}_+)$ with the L\'{e}vy metric, so that for two distribution functions $F,G$ we have
\begin{equation}
r(F,G)= \inf \{ \varepsilon > 0 | F(x-\varepsilon) \leq G(x) \leq F(x+ \varepsilon)+ \varepsilon, \, \forall x \in \mathbb{R} \}.
\end{equation}
Let $C^{(v)}(s,t)$ denote the number of vertices that turn off at some point in the interval $[s,t]$.
Observe that 
\begin{equation}\label{eq:obmet}
\sup_{s,t \in [t_{i-1}, t_i} r(F_n(s), F_n(t)) \leq \frac{C^{(v)}(s,t)}{n} + t_{i}-t_{i-1}.
\end{equation}
Given \eqref{eq:obmet} the result then follow from very similar arguments to those used in the proof of Proposition \ref{prop:DED}.
\end{proof}

\end{proof}


\subsection{LDP for driving process in Section \ref{sec:example}}
\label{appA2}

Define $\mu^*$ by letting 
\begin{equation}
\label{eq:ETPT}
\mu^*_t(A) = \mu_t(F^{\exp}(A))
\end{equation}
for all $t >0$ and $A \subset [0,1]$, where $F^{\exp}(\cdot)$ is the cdf of and exponential random variable with rate $\gamma$. Note that with this transformation we recover the empirical type process in Section \ref{sec:example}. 

\begin{proposition}
The sequence of processes $(\mu^*_n)_{n \in \mathbb{N}}$ satisfies the LDP on $D(\mathcal{M}([0,1]),[0,T])$ with rate $n$ and with rate function
\begin{equation}
K^*(\mu^*)=\inf_{\mu\in D(\mathcal{M}(\mathbb{R}_+),[0,T]): F^{\exp}(\mu)=\mu^*} K(\mu),
\end{equation}
where $K(\cdot)$ is defined in Proposition \ref{lem:LDPDP}.
\end{proposition}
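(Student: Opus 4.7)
The proof is a direct application of the contraction principle to the LDP established in Proposition~\ref{lem:LDPDP}. The plan has three steps.

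First, I would recast the construction as a continuous map between path spaces. Define $\Phi\colon D(\mathcal{M}(\mathbb{R}_+),[0,T])\to D(\mathcal{M}([0,1]),[0,T])$ by letting $\Phi(\mu)_t$ be the pushforward of $\mu_t$ under $F^{\exp}$, so that $\mu^*_n=\Phi(\mu_n)$ almost surely. (This is precisely the content of \eqref{eq:ETPT}, interpreted in the pushforward sense, since if $Y_v(t)$ has law $\mu_t$ then $X_v(t)=F^{\exp}(Y_v(t))$ has law $F^{\exp}_*\mu_t$.)

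Second, I would verify that $\Phi$ is continuous. The pointwise pushforward $\nu\mapsto F^{\exp}_*\nu$ from $\mathcal{M}(\mathbb{R}_+)$ to $\mathcal{M}([0,1])$ is continuous under weak convergence: for any $f\in C_b([0,1])$, the composition $f\circ F^{\exp}$ lies in $C_b(\mathbb{R}_+)$ since $F^{\exp}$ is continuous and bounded, so
\begin{equation}
\int_{[0,1]} f\,\ddd(F^{\exp}_*\nu_n)=\int_{\mathbb{R}_+}f\circ F^{\exp}\,\ddd\nu_n\;\longrightarrow\;\int_{\mathbb{R}_+}f\circ F^{\exp}\,\ddd\nu=\int_{[0,1]}f\,\ddd(F^{\exp}_*\nu)
\end{equation}
whenever $\nu_n\Rightarrow\nu$. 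This pointwise continuity lifts to Skorokhod continuity because the map acts independently at each time coordinate: if $\mu^{(n)}\to\mu$ in $D(\mathcal{M}(\mathbb{R}_+),[0,T])$ via time-changes $\lambda_n$ with $\sup_t|\lambda_n(t)-t|\to 0$ and $\sup_t d(\mu^{(n)}_{\lambda_n(t)},\mu_t)\to 0$, then the same time-changes $\lambda_n$ witness the convergence $\Phi(\mu^{(n)})\to\Phi(\mu)$ in $D(\mathcal{M}([0,1]),[0,T])$, by the pointwise continuity just established together with a standard compactness-and-modulus argument.

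Third, I would apply the contraction principle (e.g.\ \cite[Theorem III.20]{dH00}). By Proposition~\ref{lem:LDPDP}, the sequence $(\mu_n)_{n\in\mathbb{N}}$ satisfies the LDP on $D(\mathcal{M}(\mathbb{R}_+),[0,T])$ with rate $n$ and good rate function $K$. Since $\Phi$ is continuous and $\mu_n^*=\Phi(\mu_n)$, the image measures $(\mu_n^*)_{n\in\mathbb{N}}$ satisfy the LDP on $D(\mathcal{M}([0,1]),[0,T])$ with rate $n$ and rate function
\begin{equation}
K^*(\mu^*)=\inf\{K(\mu):\Phi(\mu)=\mu^*\}=\inf_{\mu\in D(\mathcal{M}(\mathbb{R}_+),[0,T])\,:\,F^{\exp}(\mu)=\mu^*}K(\mu),
\end{equation}
as required. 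The only mildly technical point is verifying goodness of $K$ (needed for the infimum to be attained and for the contracted function to inherit good properties), but this follows from the exponential tightness argument already used in Step (III) of the proof of Proposition~\ref{lem:LDPDP}. No further work is required beyond these routine continuity and contraction steps.
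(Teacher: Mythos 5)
Your proposal is correct and follows essentially the same route as the paper: the paper's proof simply observes that the transformation in \eqref{eq:ETPT} is continuous and invokes the contraction principle together with Proposition \ref{lem:LDPDP}. Your additional verification of pointwise and Skorokhod continuity of the pushforward map, and the remark on goodness of $K$ via exponential tightness, just makes explicit what the paper leaves implicit.
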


\begin{proof}
Because the transformation in \eqref{eq:ETPT} is continuous, the result is a direct application of Proposition \ref{lem:LDPDP} and the contraction principle (see \cite{dH00}).
\end{proof}


\small

\normalsize

\end{document}